\newcommand{\mb}{\mathbf}
\newcommand{\boldeta}{\boldsymbol\eta}
\newcommand{\Wx}{W_{\mb x}}
\newcommand{\Wb}{W_{\mb b}}
\newcommand{\xp}{{\mb x}_0}
\newcommand{\btrue}{{\mb b}_\text{true}}
\newcommand{\xtrue}{{\mb x}_\text{true}}
\newcommand{\Cb}{C_{\mb b}}
\newcommand{\Cm}{C_{\mb m}}
\newcommand{\Cx}{C_{\mb x}}
\newcommand{\ck}{k}
\newcommand{\kopt}{k_{\mathrm{opt}}}
\newcommand{\kmax}{k_{\mathrm{max}}}
\newcommand{\alphamax}{\alpha_{\mathrm{max}}}
\newcommand{\alphamin}{\alpha_{\mathrm{min}}}
\newcommand{\galpha}{\alpha}
\newtheorem{theorem}{Theorem}[section]
\newtheorem{prop}{Proposition}[section]
\newtheorem{assumption*}{Assumption}
\newtheorem{remark}{Remark}[section]
\newtheorem{corollary}{Corollary}[section]
\newcommand{\argmin}[1]{\textnormal{arg} \min_{#1}}
\begin{document}
\title{Unbiased Predictive Risk Estimation of the Tikhonov Regularization Parameter. \thanks{Rosemary Renaut acknowledges the support of NSF grant DMS 1418377: ``Novel Regularization for Joint Inversion of Nonlinear Problems".}}
\author{Rosemary A. Renaut, Anthony W. Helmstetter and Saeed Vatankhah}
\date{\today}
\begin{abstract}The truncated singular value decomposition may be used to find the solution of   linear discrete ill-posed problems  in conjunction with Tikhonov regularization and requires the estimation of a regularization parameter that balances between the sizes of the fit to data function   and the regularization term.  The unbiased predictive risk estimator is one suggested method for finding the regularization parameter  when  the noise  in the measurements is normally distributed  with known variance.  In this paper we provide an algorithm using the unbiased predictive risk estimator that automatically finds both the regularization parameter and the number of terms to use from the singular value decomposition. Underlying the algorithm is a new result that proves that   the regularization parameter  converges  with the number of terms from the singular value decomposition. For the analysis it is sufficient to assume that the discrete Picard condition is satisfied for exact data and that noise completely contaminates the measured data coefficients for a sufficiently large number of terms, dependent on both the noise level and the degree of ill-posedness of the system. A lower bound for the regularization parameter is provided leading to a computationally efficient algorithm. Supporting results are compared with those obtained using the method of generalized cross validation.  Simulations for two-dimensional examples verify the theoretical analysis and the effectiveness of the algorithm for increasing noise levels, and demonstrate that the relative reconstruction errors obtained using the truncated singular value decomposition are less than those obtained using the singular value decomposition. \\
This is a pre-print of an article published in BIT Numerical Mathematics. The final authenticated version is available online at: \url{https://doi.org/10.1007%2Fs10543-019-00762-7}

\keywords{Inverse Problems \and Tikhonov Regularization \and unbiased predictive risk estimation \and regularization parameter}
\end{abstract}
\maketitle
\pagestyle{headings}
\markright{Convergence with increasing rank approximations of the Singular Value Decomposition}\markleft{Convergence with increasing rank approximations of the Singular Value Decomposition}

\section{Introduction}\label{sec:intro}
We consider the   solution of $A\mb x \approx \mb b$, or $A\mb x \approx \mb \btrue +\boldeta=\mb b$ for noise (measurement error) $\boldeta$, where $A \in \mathcal{R}^{m\times n}$ is ill-conditioned, and the system of equations arises from the discretization of an ill-posed inverse problem that may be over or under determined. The general Tikhonov regularized linear least squares problem
\begin{align}
\mb x^* &= \argmin{\mb x} \lbrace \|A\mb x - \mb b\|_{\Wb}^2 + \|D(\mb x-\xp)\|^2_{\Wx}\rbrace,\label{Jls}
\end{align}
is a well-accepted approach for finding a smooth solution $\mb x$. Here  $\xp$ is given prior information, possibly the mean of $\mb x$,  $\Wb$ and $\Wx$ are weighting matrices on the data fidelity and  regularization terms,  resp., and $D$ is an optional regularization operator.  Often $D$ is imposed as a spatial differential operator,  controlling  the size of the derivative(s) of $\mb x$, but then~\eqref{Jls} can be brought into standard form in which $D$ is replaced by $I$, \cite{bjorck,Hansen:98}. Further, \eqref{Jls}  can be rewritten in terms of a new variable $\mb y = \mb x -\xp$. The weighted norm is defined by $\|\mb x\|^2_{W} : = \mb x^TW\mb x$ and  we use the notation   $\mb m \sim \mathcal{N}(\bm m_0, \Cm)$ for random vector $\mb m$ normally distributed with expected value $E(\bm m)=\bm m_0$ and covariance matrix $\Cm$;  $E(\cdot)$ is used to denote expected value. When  $\boldeta \sim\mathcal{N}(0,\Cb)$, then     $\Wb=\Cb^{-1}$  whitens the noise, i.e.   $\Wb^{1/2}\boldeta \sim\mathcal{N}(0,I)$. Matrix $\Wx=\Cx^{-1}$ can serve similarly as a prior on the inverse covariance of the noise in $D \mb y $. Using $\Wx=\alpha^2I$, as will be assumed here,  corresponds to assuming the posterior distribution $D \mb y \sim\mathcal{N}(0, \alpha^{-2})$, see e.g. \cite{mere:09}. Here we   discuss the solution of~\eqref{Jls} with $\xp=0$, $D=I$,  $\Wx=\alpha^2 I$,   $\Wb=I$ and explicitly assume common variance, $\sigma^2$, in the noise, $\boldeta\sim\mathcal{N}(0,\sigma^2 I)$.

While solutions of \eqref{Jls}  have been extensively studied, e.g. \cite{Hanke,Hansen:98,hansenbook,Vogel:2002} there is still much discussion concerning the selection of $\Wx$ even for the single parameter case,  $\Wx=\alpha^2 I$. Suggested techniques include, among others,  using the Morozov discrepancy principle (MDP) which assumes that the solution should be found within some prescribed $\chi^2$ noise estimate \cite{morozov},  balance of the terms in \eqref{Jls} using the L-curve \cite{Hansen:98}, the quasi-optimality condition \cite{Quasi,doi:10.1080/01630560903392941,HAMARIK20122146} and minimization of the generalized cross validation (GCV) function \cite{GoHeWa} or of the statistically motivated Unbiased Predictive Risk Estimator (UPRE) \cite{stein1981,Vogel:2002}. Of these the MDP, GCV and UPRE approaches are all \textit{a posteriori} estimators, the MDP on the $\chi^2$ distribution of the predicted residual, the GCV through its derivation as a leave one out procedure to minimize the predictive error and the UPRE as an estimator of the minimum predictive risk of the solution. There is an extensive discussion of these methods in the standard literature e.g. \cite{Hansen:98,hansenbook,Vogel:2002} and many more are compared in \cite{BAUER20111795}. We do not replicate that discussion here,   rather we    focus on the UPRE parameter choice method. The UPRE method has a firm theoretical foundation, is robust, and has been extensively applied in practical applications, \cite{Abascal:2008aa,HHSR,lin-2010-upre,mere:09,TomaSixouPeyrin:15,VAR:2014b,VRA:2017,saeed6,saeed7}.  Our analysis extends the approach in \cite{FenuGCV} which  provided bounds on the regularization parameter for  finding $\alpha$ using the GCV; the analysis in \cite{Renaut2017}  that examined convergence of the parameter with increasing resolution of the problem via the connection of the continuous and discrete singular value expansions for specific square integrable operators defining $A$; and the discussion in \cite{RVA:15} that demonstrated the relationship of the  regularization parameter obtained when using the LSQR Krylov method for large scale problems. Moreover, our interest in the UPRE, instead of the MDP,  arises because the UPRE depends only on the underlying knowledge of the noise distribution, whereas the MDP also introduces a secondary tolerance factor on the satisfaction of the $\chi^2$ distribution, which is often needed to limit over smoothing of the solutions, \cite{ABT:13}. 

Throughout we use the Singular Value Decomposition (SVD) $A=U\Sigma V^T$, \cite{GoLo:96}, with columns $\mb u_i$ and $\mb v_i$ of orthonormal $U$ and $V$ respectively, and where the singular values $\sigma_i$  of $A$ are ordered on the principal diagonal of $\Sigma$, from largest to smallest. We assume that the matrix $A$ has effective numerical rank $r$; $\sigma_r>0$, and $\sigma_i$, $i>r$ is effectively zero as determined by the machine precision.   In terms of the SVD components, the solution of \eqref{Jls} is given by
\begin{equation}\label{svdsoln}
\mb x^* = \sum_{i=1}^r \frac{\sigma_i^2}{\sigma_i^2 +\alpha^2} \frac{\mb u_i^T \mb b}{\sigma_i} \mb v_i = \sum_{i=1}^r  \gamma_i(\alpha) \frac{\mb u_i^T \mb b }{\sigma_i} \mb v_i, \quad \gamma_i(\alpha)=\frac{\sigma_i^2}{(\sigma_i^2 +\alpha^2)}.
\end{equation}
The  filter functions are $\gamma_i(\alpha)$ and the given expansion applies, replacing $r$ by $k$, when $A$ is approximated by the TSVD, $A_k = U_k \Sigma_k V_k^T$. Throughout we use the subscript $k$ to indicate variables associated with this rank $k$ approximation, for example regularization parameter $\alpha_k$ indicates the regularization parameter used for the $k$-term TSVD.  Further, the use of the SVD for $A$ provides useful insights on how the UPRE, and other methods, can be implemented when solving \eqref{Jls}. 
Here we will show that the minimization of the underlying UPRE function is efficient and robust with respect to the $k-$term truncated singular value decomposition (TSVD).  Moreover,  there is a resurgence of interest in using a TSVD solution for the solution of ill-posed problems due to the increased feasibility of finding a good approximation of a dominant singular subspace even for large scale problems by using techniques from randomization, e.g. \cite{RandNLA,FastLS,GowerandRichtarik,Mahoney,LSRN,Overdetermined}. Thus the presented results  will be more broadly relevant for efficient estimates of an approximate TSVD using these modern techniques applied for large scale problems, for which it is not feasible to find the full SVD expansion; necessarily $k<<r$.

\textit{Overview of main contributions.}
An open source algorithm, Algorithm~\ref{influx}, for efficiently estimating optimal regularization parameters $\kopt$ and $ \alpha_{\kopt}$, defined to be the optimal number of terms to use from the TSVD, and the associated regularization parameter, resp., is presented. By \texttt{optimal} we mean that these parameters are optimal in the sense of minimizing the UPRE function.  A MATLAB implementation of Algorithm~\ref{influx}  and a $2D$ test case using IR Tools \cite{GaHaNa:IR} is available at \url{ https://github.com/renautra/TSVD_UPRE_Parameter_Estimation}.   A Python 3.* implementation using NumPy and SciPy is also available and relies on provision of the singular values and coefficients $\mb u_i^T \mb b$.  In both cases an estimate for the noise variance in the data is required, as is standard for the UPRE method. The motivation for Algorithm~\ref{influx} is based on the theoretical results presented in  Section~\ref{sec:theory}. 
These results employ standard assumptions on the degree of ill-posedness of the underlying model and on the noise level in the data, \cite{hofmann1986regularization}. We briefly review how both the degree of ill-posedness and the noise level impact the choice of regularization parameter $k$, and demonstrate  that the noise level is far more restrictive so that in general $k\ll r$. 
The convergence of $\alpha_k$, when found using both UPRE and GCV methods, is  illustrated for examples  from the Regularization toolbox  \cite{Regtools}. The theory presented  in Section~\ref{sec:theory} then leads to Theorems~\ref{thm:minbnd} and \ref{thm:conv} which prove a lower bound for $\alpha_k$ and that $\alpha_k$ converges to $  \alpha_{\kopt}$, under the assumption of a unique minimum of the UPRE function. Presented results for image deblurring  verify the practicality of Algorithm~\ref{influx} and demonstrate that the solutions obtained with $\kopt<r$ yield smaller overall relative error than the solutions obtained without truncation of the SVD and $\alpha_r$ found using the UPRE method.

The paper is organized as follows: In Section~\ref{motivation} we present background motivating results based on assumptions on the degree of ill-posedness of the problem in Section~\ref{illposed}, a discussion of numerical rank in Section~\ref{sec:rank},  how noise enters into the problem in  Section~\ref{noiseentering} and   the estimation of the regularization parameter in Section~\ref{curves}. The theoretical results providing our main contributions are presented in Section~\ref{sec:theory}. A practical algorithm for estimating $ \alpha_{\kopt}$, and hence also $\kopt$, is presented in Section~\ref{sec:simulations} with simulations  verifying the analysis and the algorithm for two dimensional cases. Conclusions and future extensions are provided in Section~\ref{sec:conclusions}.

\section{Motivating Results\label{motivation}}
\subsection{Degree of Ill-Posedness\label{illposed}}
As in \cite[Definition 2.42]{hofmann1986regularization}, and subsequently adopted in \cite{Hansen:98},  for the analysis   we assume specific decay rates for the singular values  dependent on whether the problem is mildly, moderately or severely ill-posed. Suppose that $\zeta$ is an arbitrary constant, then   the decay rates are given by
\begin{equation}\label{eq:decay}
\sigma_i = \left\{ \begin{array}{lll}  \zeta i^{-\tau} & \frac12 \le \tau \le 1 & \text{mild ill conditioning}\\ \zeta i^{-\tau}& \tau> 1 & \text{moderate ill conditioning,} \\ \zeta \tau^{-i} &\tau>1 &\text{severe ill conditioning.}\end{array} \right.
\end{equation}
Here $\tau$ is a problem dependent parameter and it is assumed that the decay rates hold on average for sufficiently large $i$. Moreover, while defining $\sigma_i$ in terms of index $i$, as is consistent with the literature, it will also be convenient to consider the definition in terms of the continuous variable $i$, so that $\sigma_{i+\delta}$ is defined also for non-integer $i+\delta$. 
For ease, and without loss of generality,  we pick the constant $\zeta$ in \eqref{eq:decay} so that $\sigma_1=1$ in all cases.  Equivalently we use
\begin{equation}\label{eq:normalizeddecay}
\sigma_i = \left\{ \begin{array}{lll}   i^{-\tau} & \frac12 \le \tau \le 1 & \text{mild ill conditioning,}\\  i^{-\tau}& \tau> 1 & \text{moderate ill conditioning,} \\ \tau^{1-i} &\tau>1 &\text{severe ill conditioning,}\end{array}  \right.
\end{equation}
and note the recurrences
\begin{equation*}
\sigma_{\ell+1} =  \sigma_{\ell} \left\{ \begin{array}{ll}     (\frac{\ell}{\ell+1})^{\tau}  & \text{mild or moderate ill conditioning,}\\ \tau^{-1}  &\text{severe ill conditioning.}\end{array}  \right.
\end{equation*}
\subsection{Numerical Rank}\label{sec:rank}
The precision of the calculations, as determined by the machine epsilon $\epsilon$,  is relevant in terms of the number of  singular values that are significant in the calculation. This is dependent on the decay rate parameters of the singular values. We define the effective rank by $r=\mathrm{argmax} \{i: \sigma_i> \epsilon \sigma_1\}$.
\begin{prop}\label{proprank}
Assuming the normalization of the singular values as given by \eqref{eq:normalizeddecay},  the effective numerical rank $r$ is bounded by
\begin{eqnarray} \label{decaycalc}
  r<\left\{\begin{array}{ll} \epsilon^{-1/ \tau} &\text{mild  / moderate decay, }     \\1-\frac{\log \epsilon}{\log \tau} & \text{severe decay,}\end{array}\right.
  \end{eqnarray}
  where $\epsilon$ is the machine epsilon.
\end{prop}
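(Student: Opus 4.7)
The plan is to unwind the definition of the effective rank $r$ directly against the two cases of the decay law \eqref{eq:normalizeddecay}, using the normalization $\sigma_1=1$ so that the defining inequality becomes simply $\sigma_i>\epsilon$. In each case the inequality is elementary (power or exponential in $i$), so the proof reduces to solving for $i$ and taking the floor.

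First I would state that by the definition of $r$, for every $i\le r$ we have $\sigma_i>\epsilon\sigma_1=\epsilon$, so in particular $\sigma_r>\epsilon$, which is the inequality to solve. In the mild/moderate case $\sigma_i=i^{-\tau}$, so $\sigma_r>\epsilon$ is equivalent to $r^{\tau}<\epsilon^{-1}$, i.e.\ $r<\epsilon^{-1/\tau}$. In the severe case $\sigma_i=\tau^{1-i}$, so $\sigma_r>\epsilon$ becomes $\tau^{r-1}<\epsilon^{-1}$; taking logarithms (noting $\tau>1$, so $\log\tau>0$) gives $(r-1)\log\tau<-\log\epsilon$, i.e.\ $r<1-\log\epsilon/\log\tau$. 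Collecting the two bounds yields exactly \eqref{decaycalc}.

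There is essentially no obstacle here; the only points worth being careful about are the sign of $\log\epsilon$ (negative, since $\epsilon<1$, which is why the bound $1-\log\epsilon/\log\tau$ is a positive real number larger than $1$) and the strictness of the inequality, which is inherited from the strict inequality in the definition of $r$. No further assumption about the problem size $n$ enters, so the bound is automatically valid whenever such an $r$ exists; if the decay is so slow that $\sigma_n>\epsilon$ for all available indices, then $r=n$ and the stated inequality still holds because $n$ is bounded by $\epsilon^{-1/\tau}$ (resp.\ $1-\log\epsilon/\log\tau$) in the relevant regime. No display-math manipulations beyond these two lines are needed, and the whole proof will occupy only a few lines in the paper.
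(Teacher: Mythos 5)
Your proposal is correct and follows essentially the same route as the paper's proof: unwind the definition of $r$ against the normalized decay law, solve $r^{-\tau}>\epsilon$ in the mild/moderate case and $\tau^{1-r}>\epsilon$ in the severe case for $r$. Your extra remarks on the sign of $\log\epsilon$ and the edge case where all singular values exceed $\epsilon$ are careful additions but do not change the argument, which is the paper's one-line calculation.
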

\begin{proof}
Using \eqref{eq:normalizeddecay}  and normalization $\sigma_1=1$, it is immediate
that  we obtain  \eqref{decaycalc} from
\begin{eqnarray*}\begin{array}{llll}
\text{mild  / moderate:} &r^{-\tau}> \epsilon \text{ implies } r< \epsilon^{-1/ \tau} \\
\text{severe :} &\tau^{1-r}> \epsilon \text{ implies } r< 1-\frac{\log \epsilon}{\log \tau}. \\
\end{array}
\end{eqnarray*}
\end{proof}
Estimates for  numerical rank dependent on the decay rates, are given in Table~\ref{svddecay} for moderate and severe decay. It is immediate that $r$ is very small for cases of severe decay. Hence, for any problem exhibiting this severe decay  and assuming that the discretization is sufficiently fine such that $n\ge r$,  Table~\ref{svddecay} suggests the maximum number of terms that one would  use for the TSVD. Note that apart from the condition $n\ge r$ the results in Table~\ref{svddecay} are effectively independent of the discretization, thus the number of terms that can be used practically is largely independent of the discretization of the problem once $n\ge r$.  Equivalently, with estimates of $\tau$ and $\epsilon$ one may use \eqref{decaycalc} to determine first a minimum $n$ and second the maximum number of terms for the TSVD, the maximum effective numerical rank of the problem.
\begin{table}
\caption{Number of significant singular values $r$ for precision $\epsilon=10^{-15}$ as a function of $\tau$. i.e. $r$ is the numerical rank of the problem.}\label{svddecay}
\begin{tabular}{cc|ccccccccccc}
&$\tau$&$1.25$& $1.50$&$1.75$&$2.00$&$2.50$&$3.00$&$4.00$&$5.00$&$6.00$\\ \hline
Moderate&$r$& $ {1\mathrm{e}{+12}}$&   $ {1\mathrm{e}{+10}}$&   $ 4\mathrm{e}{+8}$&   $3\mathrm{e}{+7}$& ${1\mathrm{e}{+6}}$&     $ 1\mathrm{e}{+5} $ &$5623$&  $1000$  &$316$  \\  \hline
Severe&$r$& $155$&   $ 86$&   $ 62$&   $ 50$& $    38$&     $ 32 $ &$25$&  $22$  &$20$  \\  \hline
\end{tabular}
\end{table}

These results are further illustrated in Figure~\ref{fig:svs} in which we plot the singular values of test problems from the Regularization toolbox, \cite{Regtools}, with the normalization $\sigma_1=1$ in each case. The plots show that these standard one dimensional test cases are primarily severely ill-posed, and thus, according to Table~\ref{svddecay}, guaranteed to have numerically very few accurate terms in the TSVD used for the solution \eqref{svdsoln}. To show the relative independence of $n$ we show in Figure~\ref{fig:svs256} the singular value distributions for the same cases  and on the same scales as in Figure~\ref{fig:svs} but using $n=256$. This verifies  that there is little to be gained by the use of  problems with severe decay, as presented in  \cite{Renaut2017}, to validate convergence of techniques with increasing problem size. The dominant features are always represented by very few terms of the TSVD for cases with severe decay rates of the singular values.

As a comparison we also show in Figure~\ref{fig:exactsvs} the singular values generated using \eqref{decaycalc} with a selection of decay rates, as indicated in the legend. These show the dependence on $\tau$ for mild, moderate and severe decays. Taken together the examples in  Figure~\ref{fig:svmodels} show that the results are not just an artificial artifact of the seemingly strong assumption  in \eqref{eq:normalizeddecay} that $\zeta$ is fixed.

\begin{figure}[!htb]
\begin{center}
\subfloat[Toolbox $n=128$ \cite{Regtools} \label{fig:svs}]{\includegraphics[width=.33\textwidth]{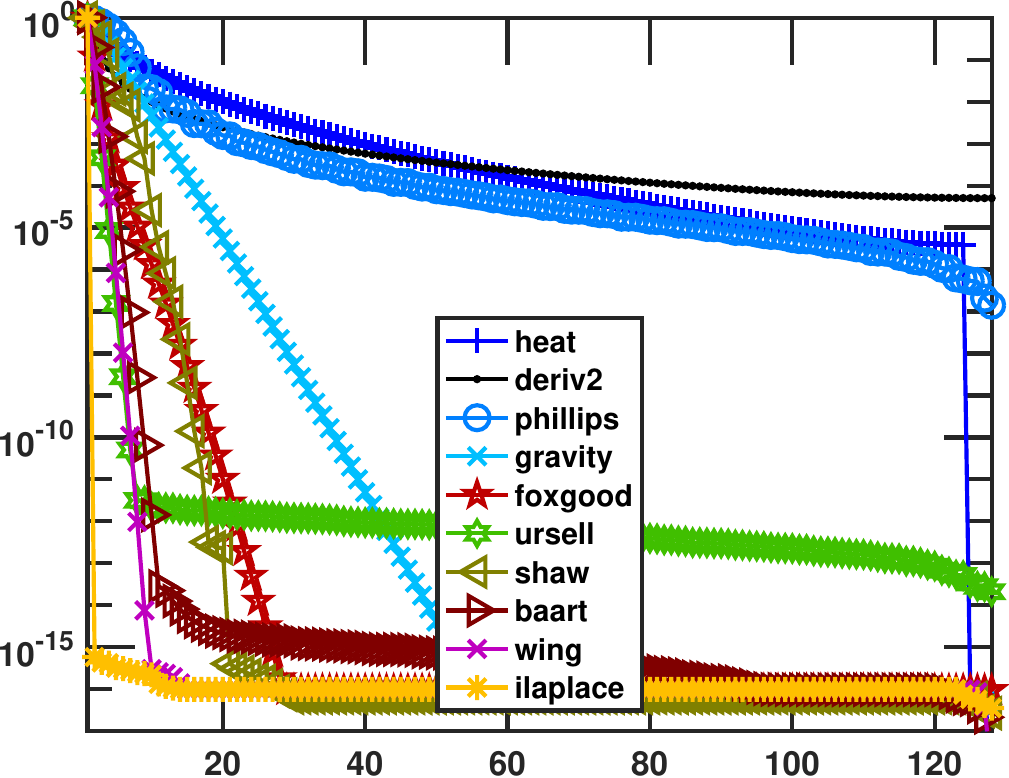}}
\subfloat[Toolbox $n=256$ \cite{Regtools} \label{fig:svs256}]{\includegraphics[width=.33\textwidth]{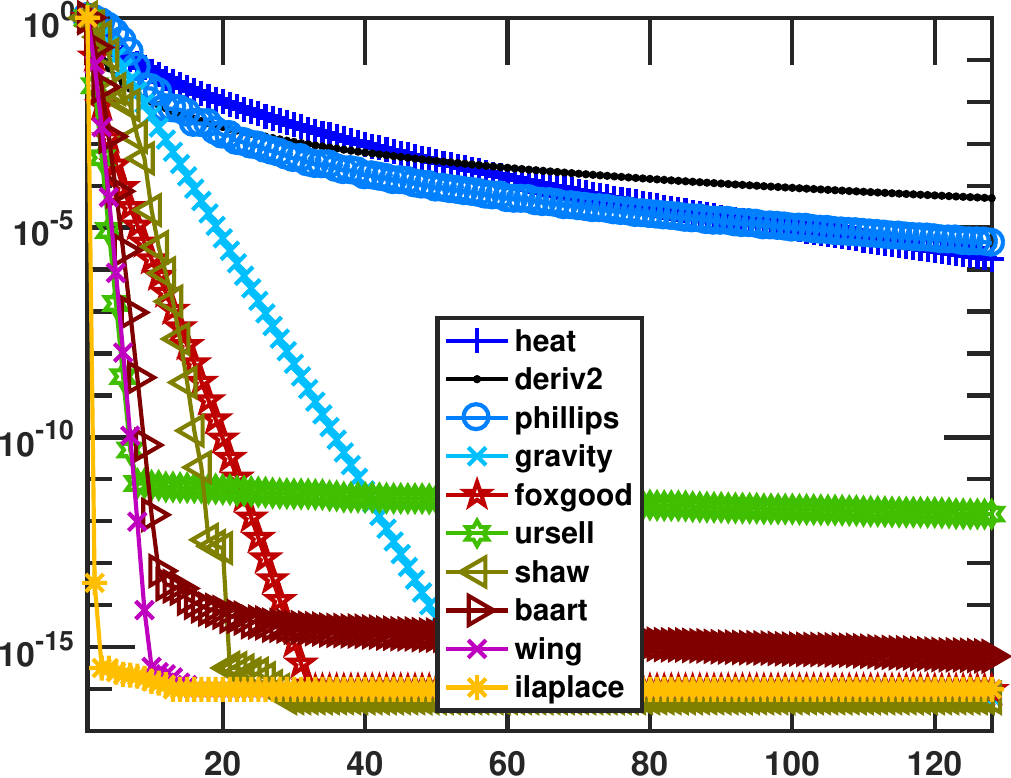}}
\subfloat[Modeled decay rates\label{fig:exactsvs}]{\includegraphics[width=.33\textwidth]{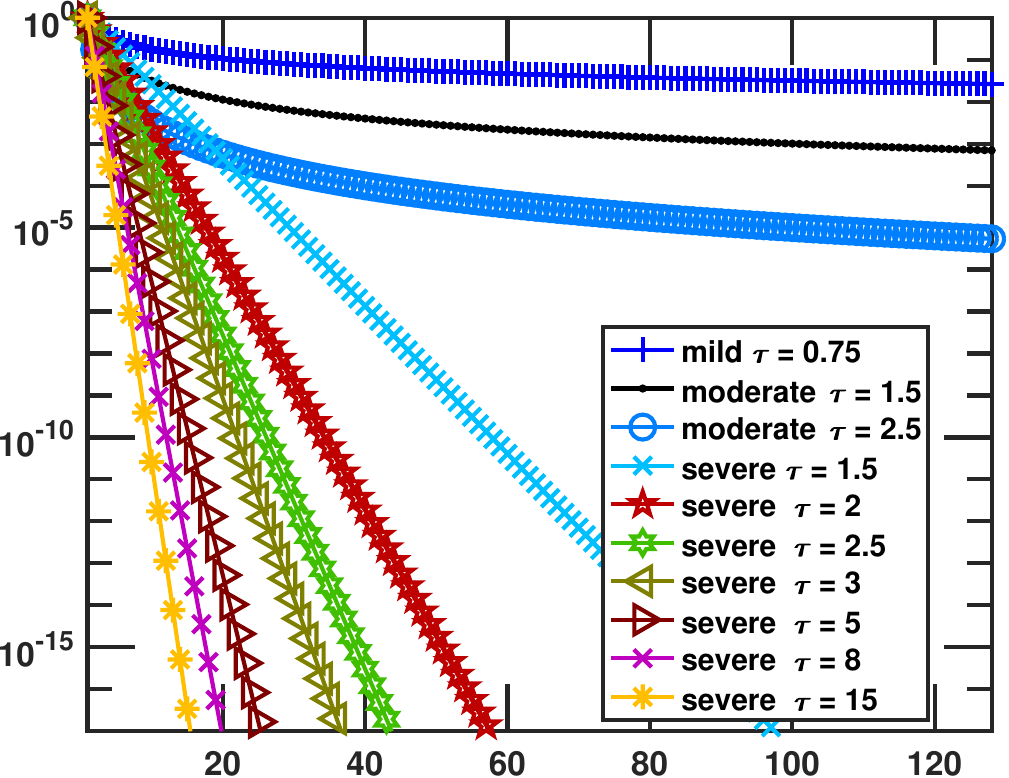}}
\end{center}
\caption{ The singular value distribution for $n=128$ for noted examples from \cite{Regtools}, normalized to $\sigma_1=1$ in Figure~\ref{fig:svs}.  To show the independence of $n$, decay for  $n=256$ for the same examples from \cite{Regtools} is also shown in Figure~\ref{fig:svs256}. For each of the toolbox examples it is possible to  compare with a simulated case for a specific decay rate by illustrating \eqref{eq:normalizeddecay} for severe, moderate or mild decay choices of $\tau$, as appropriate, given in Figure~\ref{fig:exactsvs}\label{fig:svmodels}.}
\end{figure}

\subsection{The Discrete Picard Condition and Noise Contamination}\label{noiseentering}
We now turn to the consideration of the noise in the coefficients $s_i=\mb u_i^T \mb b$ and the impact of this noise  on the potential resolution in the solution, as also discussed in \cite[\S4.8.1]{Hansen:98}. 
First, we assume that the singular values satisfy a decay rate condition  \eqref{eq:normalizeddecay}. We also assume that that the absolute values of the exact coefficients decay at least faster than the singular values \begin{equation}\label{Picard}
(s^2_i)_{\text{true}}\le \sigma_i^{2(1+\nu)} \quad \text{for} \quad 0<\nu<1.
\end{equation}
Then the discrete Picard condition is satisfied, \cite{Hansen1990}, \cite[Theorem 4.5.1]{Hansen:98}. 

When the noise in the data has common variance $\sigma^2$ we assume that there exists $\ell$ such that $E(s_i^2)=\sigma^2$,  for all $i>\ell$. Equivalently, we say that the coefficients are noise dominated for $i>\ell$.  If this does not occur, then either the noise is insignificant, $\sigma^2<\sigma_i^2$ for all $i$, or totally dominates the solution $\sigma^2>1$, and these two cases are not of interest. Thus we can explicitly assume that there exists $\ell$ such that $\sigma_{\ell+1}<\sigma<\sigma_\ell$, and more precisely that 
\begin{eqnarray*}
\sigma_{\ell+1}^2<\sigma^{1+\nu}_{\ell+1}<\sigma<\sigma^{1+\nu}_{\ell}<\sigma_{\ell},
\end{eqnarray*}
where we use definition \eqref{eq:normalizeddecay}  as a continuous function of $i$ for a non integer index $\ell+\delta$. 
\begin{prop}
Let   $\sigma=\sigma_{\ell+\delta}^{1+\nu}$ for $0\le \delta<1$ and $0<\nu<1$, then  $E(s_i^2)=\sigma^2$ for $i> \ell$ where
\begin{equation}\label{noiseest}\begin{array}{ll}
\ell \approx \left\{ \begin{array}{ll}    \sigma^{-1/ (\tau(1+\nu))}-\delta& \text{mild  / moderate decay, }     \\
(1-\delta) -\frac{\log \sigma}{(\nu+1)\log \tau}.& \text{severe decay.}\end{array}\right.
\end{array}
\end{equation}
\end{prop}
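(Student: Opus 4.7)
The plan is to treat this as a direct inversion of the singular value decay laws \eqref{eq:normalizeddecay}, extended to the continuous index $\ell+\delta$, following exactly the same pattern as Proposition \ref{proprank}. The hypothesis $(s_i^2)_{\text{true}} \le \sigma_i^{2(1+\nu)}$ together with the interlacing $\sigma_{\ell+1}^{1+\nu} < \sigma < \sigma_\ell^{1+\nu}$ identifies a (non-integer) crossover index $\ell+\delta$, $0<\delta<1$, at which the exact coefficient bound equals the noise standard deviation; for $i$ above this crossover, the deterministic part is smaller than the noise level $\sigma$, so $s_i$ is noise-dominated. Thus solving $\sigma_{\ell+\delta}^{1+\nu}=\sigma$ for $\ell$ is exactly what gives the claimed cut-off.

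In the mild/moderate regime I would substitute $\sigma_{\ell+\delta}=(\ell+\delta)^{-\tau}$ into $\sigma_{\ell+\delta}^{1+\nu}=\sigma$ to get $(\ell+\delta)^{-\tau(1+\nu)}=\sigma$, then raise both sides to the power $-1/(\tau(1+\nu))$ and subtract $\delta$, yielding $\ell=\sigma^{-1/(\tau(1+\nu))}-\delta$. In the severe regime I would substitute $\sigma_{\ell+\delta}=\tau^{1-(\ell+\delta)}$, take logarithms of $\sigma_{\ell+\delta}^{1+\nu}=\sigma$ to get $(1-(\ell+\delta))(1+\nu)\log\tau=\log\sigma$, and solve to obtain $\ell=(1-\delta)-\log\sigma/((1+\nu)\log\tau)$. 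These two lines of algebra are the core of the proof and mirror the two cases of Proposition \ref{proprank} almost verbatim; the only new element is the factor $1+\nu$ coming from raising $\sigma_i$ to the Picard exponent.

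The final step is a justification sentence explaining the ``$\approx$'' symbol in \eqref{noiseest}: because the relations \eqref{eq:normalizeddecay} are asymptotic (``hold on average for sufficiently large $i$'') and because $\ell$ must in practice be an integer while the derived value is real, the equality obtained from the algebra is an asymptotic estimate rather than a sharp identity. I would also remark that the bound \eqref{Picard} is an inequality rather than an equality, so the derived $\ell$ is an upper estimate on the noise-free portion of the spectrum.

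The calculation itself is routine and I do not expect any real obstacle; the only delicate point is verifying that the ordering $\sigma_{\ell+1}^{1+\nu}<\sigma<\sigma_\ell^{1+\nu}$ is consistent with choosing a continuous interpolant $\ell+\delta\in(\ell,\ell+1)$ under the monotonic decay assumption on $\sigma_i$, which is clear in both the power-law and exponential cases since the functions $i\mapsto i^{-\tau(1+\nu)}$ and $i\mapsto \tau^{(1-i)(1+\nu)}$ are strictly decreasing.
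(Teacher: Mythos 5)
Your proposal is correct and follows essentially the same route as the paper: solving $\sigma_{\ell+\delta}^{1+\nu}=\sigma$ under the decay laws \eqref{eq:normalizeddecay} gives exactly the two equations $(\ell+\delta)^{-\tau(1+\nu)}=\sigma$ and $\tau^{(1-(\ell+\delta))(1+\nu)}=\sigma$ that appear in the paper's proof, with identical algebra mirroring Proposition~\ref{proprank}. Your added remarks justifying the ``$\approx$'' and the consistency of the continuous interpolant are sensible elaborations of what the paper leaves implicit, but the argument is the same.
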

\begin{proof}
As in the proof of Proposition~\ref{proprank} we solve for $\ell$ dependent on the decay rate with respect to the upper bound in \eqref{noiseest}. This gives
\begin{equation*}
\begin{array}{llll}
\text{mild  / moderate:} &(\ell+\delta)^{-\tau(1+\nu)}= \sigma \text{ implies } \ell= \sigma^{-1/ (\tau(1+\nu))}-\delta, \\
\text{severe :} &\tau^{(1-(\ell+\delta))(1+\nu)}= \sigma \text{ implies } \ell= (1-\delta) -\frac{\log \sigma}{(\nu+1)\log \tau}.
\end{array}
\end{equation*}
\end{proof}

Estimates using $\delta=\nu=0.5$ are indicated in Table~\ref{noiseterms} showing that the number of terms is relatively small even for moderate decay of the singular values for acceptable noise estimates $\sigma$.  Contrasting with Table~\ref{svddecay} we see that the number of  coefficients that can be distinguished from the noise is generally less than the numerical rank of the problem for relevant noise levels and machine precision. This limits the number of the terms of the TSVD to use. In particular, suppose that  $\alpha$ has to be found  to filter the dominant noise terms with index $i\ge \ell$, then  coefficients with $i\gg\ell$ will be further damped because the filter factors given in \eqref{svdsoln} decrease as a function of $i$. These terms then become insignificant in terms of the expansion for the solution.
\begin{table}
\caption{For different noise levels $\sigma$ the size of $\ell$ for given $\tau$ and with $\delta=\nu=0.5$. Entries calculated with rounding using \eqref{noiseest}.}\label{noiseterms}
\begin{tabular}{c|ccccccccccc}\hline
$\tau$&$1.25$& $1.50$&$1.75$&$2.00$&$2.50$&$3.00$&$4.00$&$5.00$&$6.00$\\ \hline
$\sigma$&\multicolumn{10}{c}{Moderate decay} \\ \hline
$1\mathrm{e}{-1}$&     $3$&$2$&$2$&$2$&$1$&$1$&$1$&$1$&1&\\ \hline
 ${1\mathrm{e}{-2}}$&    $11$    & $7$   &  $5$    & $4$    & $3$   &  $2$    & $2$  &   $1$   &  $1$ \\ \hline
 $1\mathrm{e}{-4}$&   $135$  &  $59$ &   $33$&    $21$&    $11$ &    $7$ &    $4$&    $ 3$ &    $2$\\ \hline
 $1\mathrm{e}{-8}$&   $18478$  &  $ 3593  $ &   $ 1115 $&    $464  $&    $135$ &    $59$ &    $21$&    $ 11$ &    $7$\\ \hline
$\sigma$&\multicolumn{10}{c}{Severe decay} \\ \hline
 $1\mathrm{e}{-1}$& $7$  &   $4$ &    $3$ &    $3$&     $2$   & $ 2$     &$2$    & $1$  &   $1$   \\ \hline
 $1\mathrm{e}{-2}$& $14$  &   $8$ &    $6$ &    $5$&     $4$   & $ 3$     &$3$    & $2$  &   $2$   \\ \hline
 $1\mathrm{e}{-4}$& $28$  &   $16$ &    $11$ &    $9$&     $7$   & $ 6$     &$5$    & $4$  &   $4$   \\ \hline
 $1\mathrm{e}{-8}$& $56$  &   $31$ &   $22$& $18$ &    $14$&     $12$   & $ 9$     &$8$    & $7$   \\ \hline
\end{tabular}
\end{table}

\subsection{Regularization Parameter Estimation}\label{curves}
We  deduce from Tables~\ref{svddecay} and \ref{noiseterms} that the number of terms of the TSVD used for the solution of the regularized problem may strongly influence the choice for $\alpha$. Specifically the number of terms  $k$ of the TSVD to use should be less than the numerical rank, $k<r$, and is dependent on the noise level in the data. We are interested in   investigating the choice of $\alpha$ when obtained using the UPRE, but for comparison we also give the GCV function needed for the simulations, and note again that bounds on $\alpha$ dependent on $k$ have already been provided in \cite{FenuGCV}. The GCV and UPRE methods are derived without the use of the SVD, \cite{GoHeWa} and \cite{Vogel:2002}, resp., but it is convenient for the analysis to express both methods in terms of the SVD.  Ignoring constant terms in the UPRE that do not impact the location of the minimum,  introducing $\phi_i(\alpha)=1-\gamma_i(\alpha)=\alpha^2/(\sigma_i^2+\alpha^2)$,  and noting $\gamma_i(\alpha)=0$, for $i>k$, these are given by
\begin{eqnarray}\label{uprefunc}
U_{\ck}(\galpha)&=&\sum_{i=1}^{\ck} (1-\gamma_i(\galpha))^2 ({\mb  u}_i^T {\mb b})^2+2 \sigma^2 \sum_{i=1}^{\ck}\gamma_i(\galpha) = \sum_{i=1}^{\ck} \phi_i^2(\alpha) s_i^2+2 \sigma^2 \sum_{i=1}^{\ck}\gamma_i(\galpha),\\ \label{gcvfunction}
 G_{\ck}(\galpha)&=& \frac{\sum_{i=1}^{m} (1-\gamma_i(\galpha))^2 ({\mb u}_i^T {\mb b})^2}{\left(\sum_{i=1}^{m} (1-\gamma_i(\galpha))\right)^2}= \frac{\sum_{i=1}^{k} \phi^2_i(\galpha)s_i^2+\sum_{i=k+1}^{m} s_i^2}{\left((m-k)+\sum_{i=1}^{k} \phi_i(\galpha)\right)^2}.
\end{eqnarray}
Here the subscript $\ck\le r$ indicates that these are the expressions obtained using the TSVD,
see e.g. \cite[Appendix B]{RVA:15} for derivations of the UPRE and GCV functions for arbitrary pairs $(m,n)$.  Replacing $k$ by $r$ gives the standard functions for the full SVD.  Further, we do not need all terms of the SVD to calculate the numerator in \eqref{gcvfunction}. Using $\|\mb b\|_2^2 = \|U^T \mb b\|_2^2$  we  can use
\begin{eqnarray*}
\sum_{i=k+1}^{m}  ({\mb u}_i^T {\mb b})^2 &=& \|\mb b\|_2^2 - \sum_{i=1}^{k}  ({\mb u}_i^T {\mb b})^2 = \|\mb b\|_2^2 - \sum_{i=1}^{k}  s_i^2.
\end{eqnarray*}

To illustrate how $\alpha_k$ varies  when found using these functions we illustrate an example of a problem that is only moderately  ill-posed ($\tau\approx 1.5$),  showing the results of calculating the UPRE and GCV functions for  data with noise variance $\sigma^2\approx 1\mathrm{e}{-4}$  and $\sigma^2\approx 1\mathrm{e}{-2}$ for the problem \texttt{deriv2}. The  data and solution $\xtrue$ are initially normalized so that $\|\btrue \|_2=1$.  Consistent with the decay rate assumptions the  singular values are normalized by $\sigma_1$. This requires additional normalization of  $\btrue$ by $\sigma_1$, so that eventually $\|\btrue\|_2=\sigma_1^{-1}$.  Then noise contaminated data are generated as $\mb b=\btrue +\boldeta$ for $\boldeta\sim \mathcal{N}(0,\sigma^2I)$, for noise level $\sigma$.   In these examples, the optimal value $\alpha_k$ is obtained by first evaluating $f(\alpha)$, $f(\alpha)=U_k(\alpha)$ or $f(\alpha)=G_k(\alpha)$ as specified in \eqref{uprefunc} or \eqref{gcvfunction}, resp., at $\sigma_i$, $1\le i \le k$. This provides $\alpha_{\mathrm{est}}=\argmin{1\le i\le k} f(\sigma_i)$.  This estimate of the minimum is used as the initial value for minimizing $f(\alpha)$ using Matlab \texttt{fminbnd}  within the interval $[.01 \alpha_{\mathrm{est}}, 100\alpha_{\mathrm{est}}]$. While this choice of lower and upper bounds on $\alpha_k$ is somewhat arbitrary, it is similar to the approach used in \cite{Regtools} for minimizing the UPRE and GCV functions, and is chosen to assure that values for $\alpha_k$ outside the interval $[\sigma_k,\sigma_1]$ are possible when either $\alpha_{\mathrm{est}}=\sigma_k$ or $\sigma_1$.  We pick this specific example in Figure~\ref{fig:functionsderiv2} to highlight the discussion as applied to a problem which is not severely ill-posed. We also give the same information in Figure~\ref{fig:functionsgravity} for the severely ill-posed problem \texttt{gravity} ($\tau\approx 1.5$), see Figure~\ref{fig:svs}.  To gain further insight the Picard plot, plots of $\sigma_i$, $|\mb u_i^T \mb b |$ and the ratio $|\mb u_i^T \mb b |/\sigma_i$, is given in each case in Figures~\ref{fig:lowpicardderiv2} and \ref{fig:highpicardderiv2} for \texttt{deriv2} and in Figures~\ref{fig:lowpicardgravity} and \ref{fig:highpicardgravity} for \texttt{gravity}. The solutions are contaminated by noise very quickly for small $k$, corresponding to fast convergence of $\{\alpha_k\}$ with $k$.

\begin{figure}[!htb]
\begin{center}
\subfloat[$\alpha_k$ convergence. $\sigma^2\approx 1\mathrm{e}{-4}$. \label{fig:alphalowderiv2}]{\includegraphics[width=.25\textwidth]{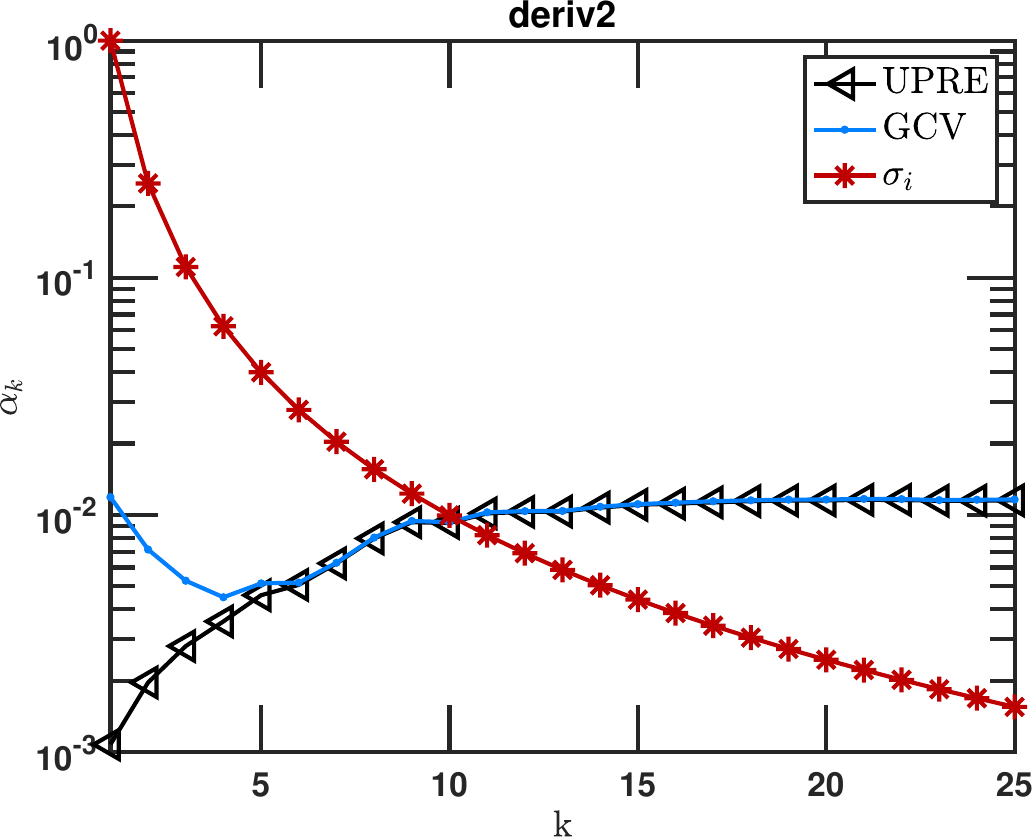}}
\subfloat[Picard Plot. $\sigma^2\approx 1\mathrm{e}{-4}$.\label{fig:lowpicardderiv2}]{\includegraphics[width=.24\textwidth]{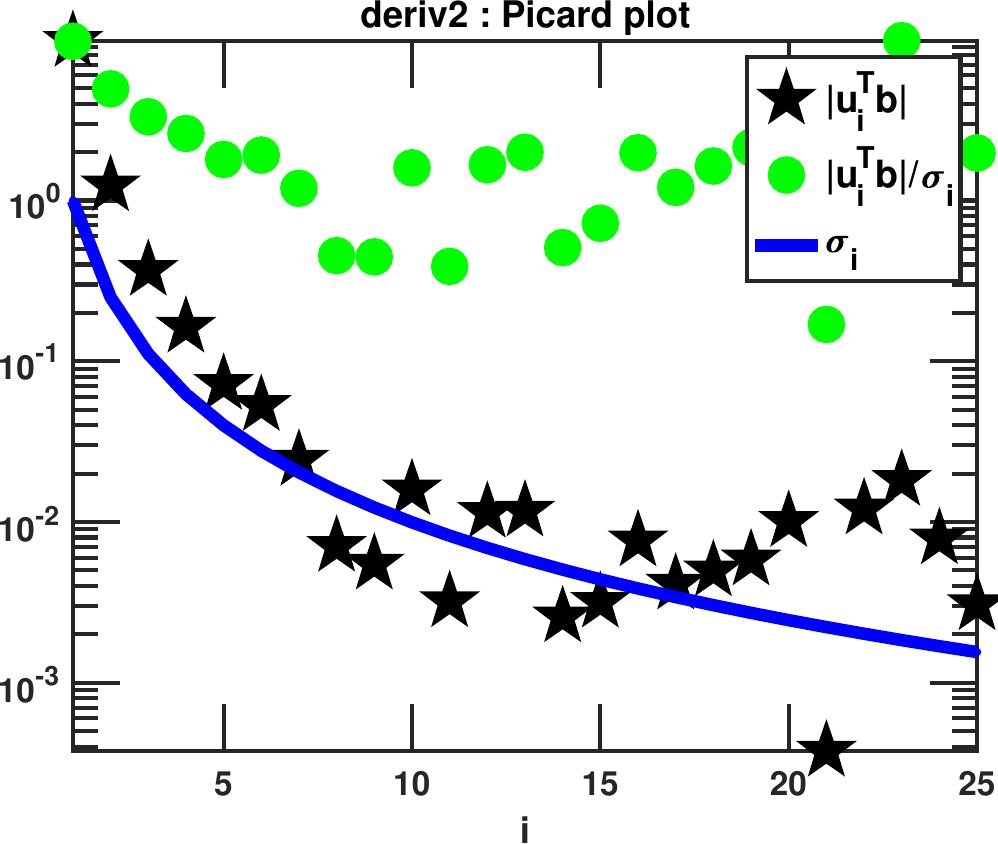}}
\subfloat[$\alpha_k$ convergence.  $\sigma^2\approx 1\mathrm{e}{-2}$.\label{fig:alphahighderiv2}]{\includegraphics[width=.25\textwidth]{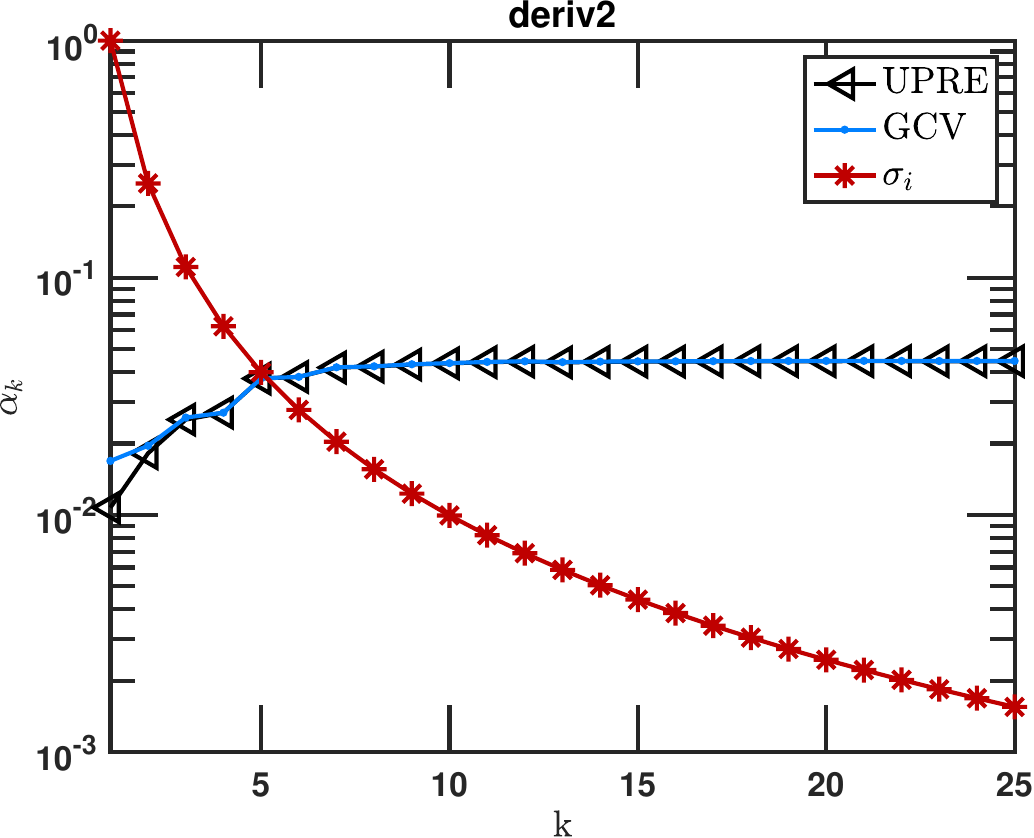}}
\subfloat[Picard Plot. $\sigma^2\approx 1\mathrm{e}{-2}$. \label{fig:highpicardderiv2}]{\includegraphics[width=.24\textwidth]{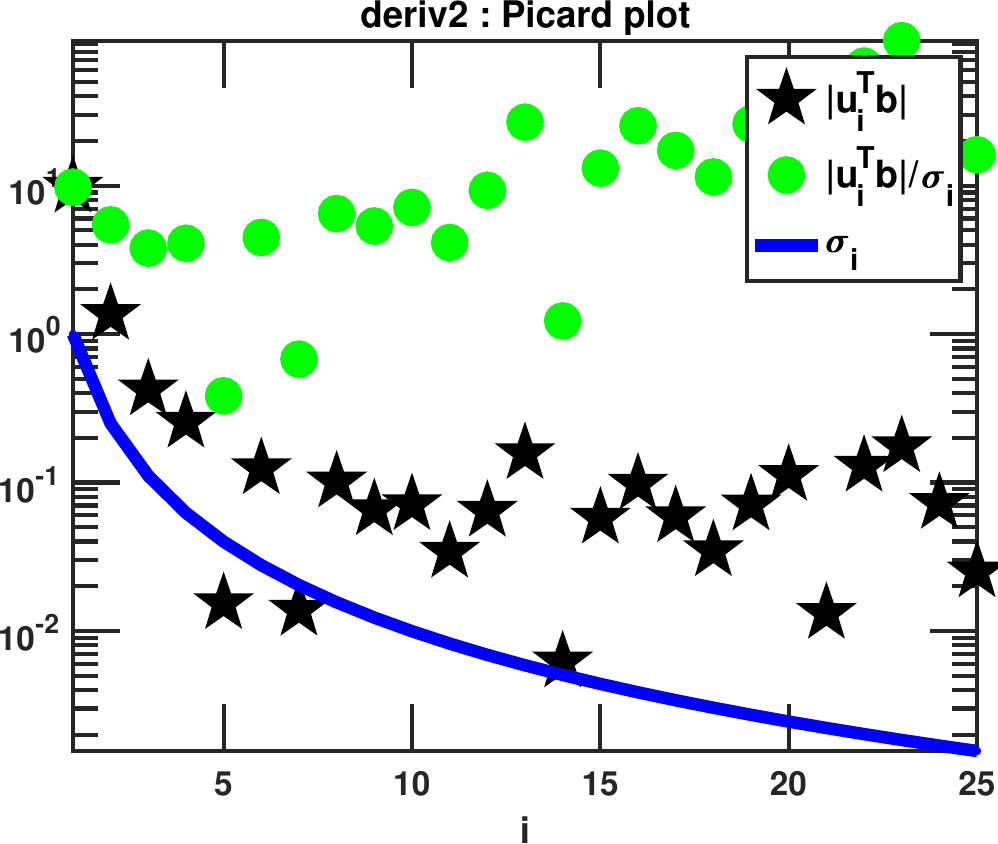}}
\end{center}
\caption{ Example \texttt{deriv2} from \cite{Regtools} showing the convergence of $\{\alpha_k\}$ for UPRE and GCV functions for TSVD sizes of $1:25$ as compared to the decay of the singular values, for the original problem of size $128$ and the associated Picard plot for the data. In Figures~\ref{fig:alphalowderiv2}-\ref{fig:lowpicardderiv2},  and  Figures~\ref{fig:alphahighderiv2}-\ref{fig:highpicardderiv2}, the noise variances are  $\sigma^2\approx 1\mathrm{e}{-4}$ and $\sigma^2\approx 1\mathrm{e}{-2}$, respectively. The converged relative errors in each case are $.273$ and $.381$ for the two noise variances $\sigma^2\approx 1\mathrm{e}{-4}$ and $\sigma^2\approx 1\mathrm{e}{-2}$, respectively.
\label{fig:functionsderiv2}}
\end{figure}

\begin{figure}[!htb]
\begin{center}
\subfloat[$\alpha_k$ convergence.  $\sigma^2\approx 1\mathrm{e}{-4}$. \label{fig:alphalowgravity}]{\includegraphics[width=.25\textwidth]{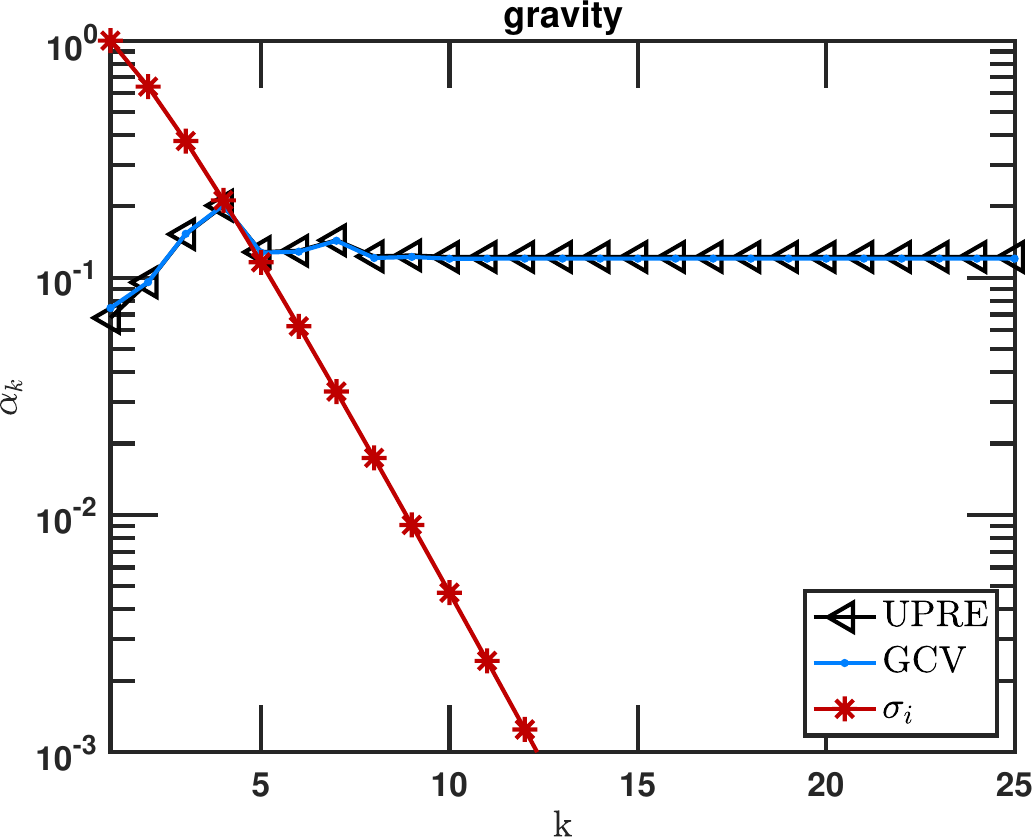}}
\subfloat[Picard Plot. $\sigma^2\approx 1\mathrm{e}{-4}$. \label{fig:lowpicardgravity}]{\includegraphics[width=.24\textwidth]{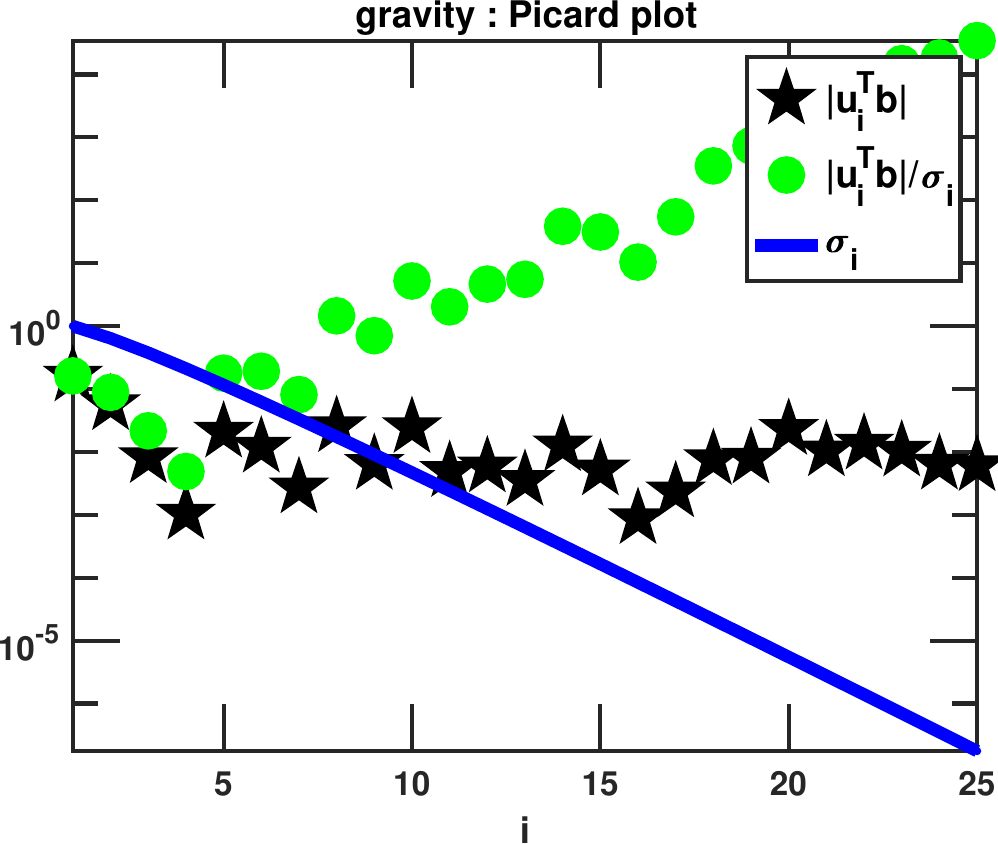}}
\subfloat[$\alpha_k$ convergence. $\sigma^2\approx 1\mathrm{e}{-2}$.\label{fig:alphahighgravity}]{\includegraphics[width=.25\textwidth]{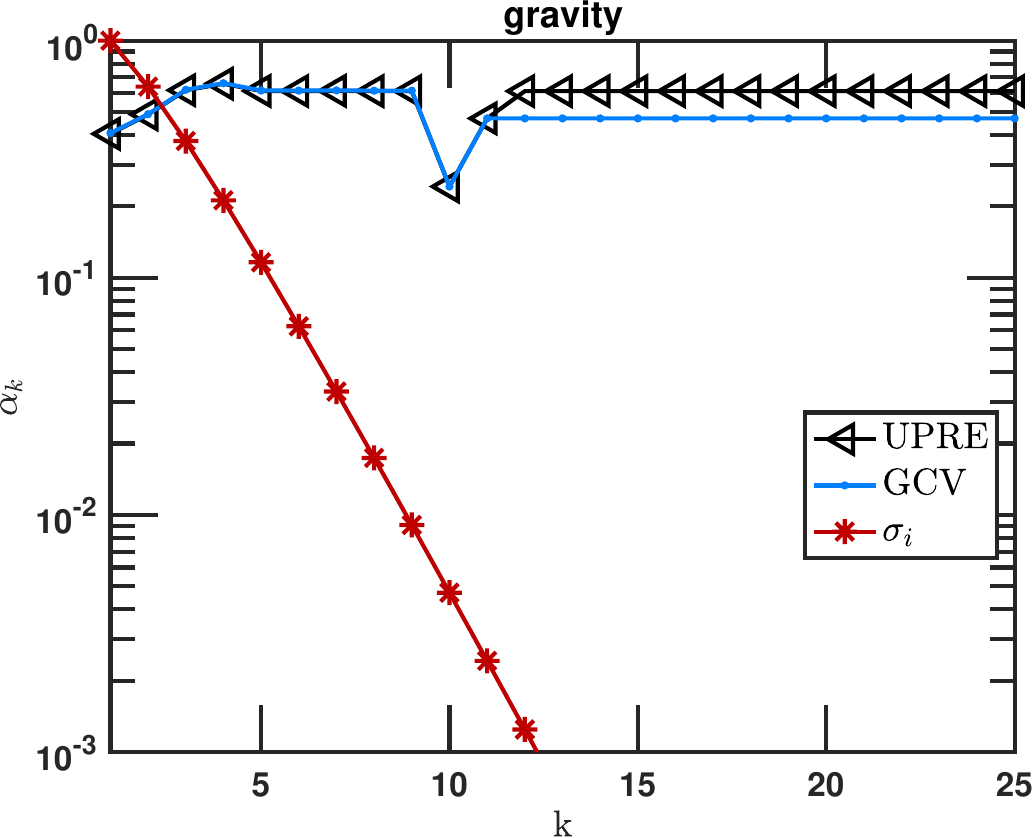}}
\subfloat[Picard Plot. $\sigma^2\approx 1\mathrm{e}{-2}$.\label{fig:highpicardgravity}]{\includegraphics[width=.24\textwidth]{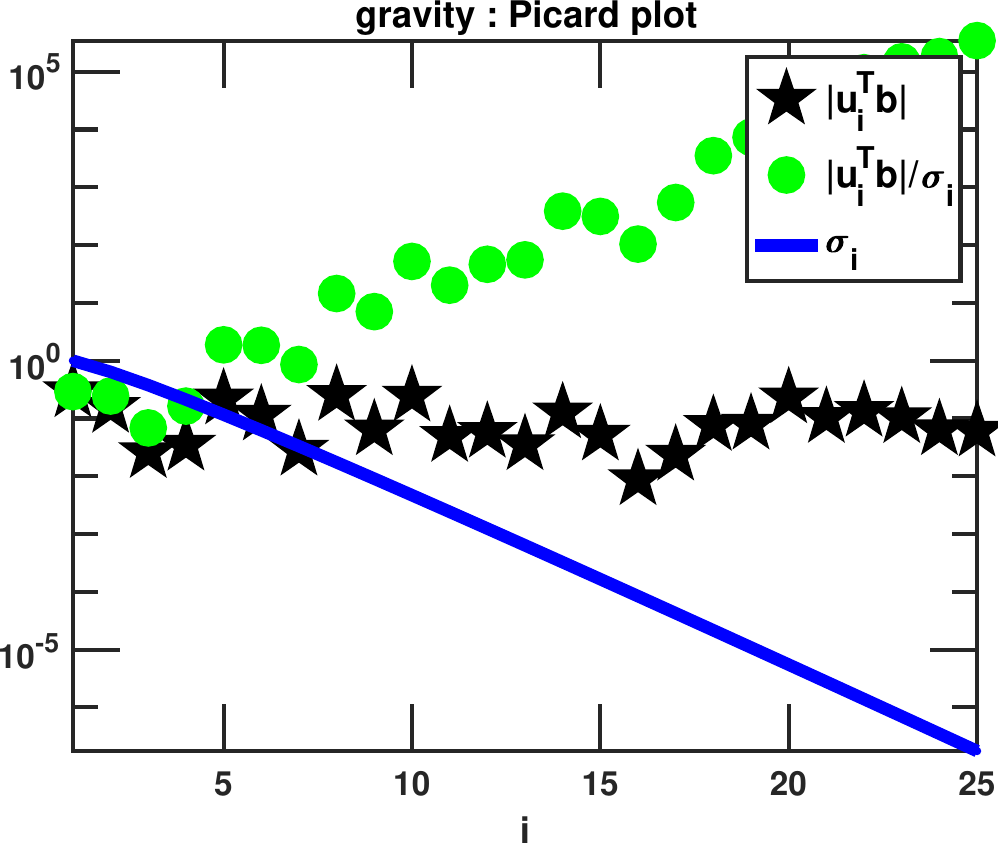}}
\end{center}
\caption{ Example \texttt{gravity} from \cite{Regtools} showing the convergence of $\{\alpha_k\}$ for UPRE and GCV functions for TSVD sizes of $1:25$  as compared to the decay of the singular values, for the original problem of size $128$ and the associated Picard plot for the data. In Figures~\ref{fig:alphalowgravity}-\ref{fig:lowpicardgravity},  and Figures~\ref{fig:alphahighgravity}-\ref{fig:highpicardgravity},  the noise variances are  $\sigma^2\approx 1\mathrm{e}{-4}$ and $\sigma^2\approx 1\mathrm{e}{-2}$, respectively. The converged relative errors  for noise variance $\sigma^2\approx 1\mathrm{e}{-4}$ are $.644$ and $.655$ for UPRE and GCV respectively. For noise variance $\sigma^2\approx 1\mathrm{e}{-2}$, these errors are $.787$ and $1.143$, respectively.\label{fig:functionsgravity}}
\end{figure}

Obtaining one-dimensional results, as shown in Figures~\ref{fig:functionsderiv2}-\ref{fig:functionsgravity}, is trivial but motivates the theoretical study of convergence in Section~\ref{sec:theory}, and then the application of that theory to standard two-dimensional problems in Section~\ref{sec:simulations}.

\section{Theoretical Results\label{sec:theory}}
We aim to find effective practical bounds on the regularization parameter $\alpha$ when found using the UPRE function. Observe first that we would not expect the regularization parameter to be larger than $\sigma_1$, otherwise all filter factors are less than $1/2$. Indeed imposing $\alpha=\sigma_1$ would lead to over smoothed solutions, and all of the dominant singular value components (the components without noise contamination) would be represented in the solution with filtering e.g \cite[Sections 4.4,  4.7]{hansenbook}.  In particular, the norm of the covariance matrix for the truncated filtered Tikhonov solution, the \textit{a posteriori} covariance of the solution, is approximately bounded by $\sigma^2/(4\alpha^2)$ which suggests smooth solutions for large $\alpha$. In contrast, the approximate bound for the \textit{a posteriori}  covariance when using the TSVD with $k$ terms without filtering is given by $\sigma^2/\sigma_k^2$  \cite[Sections 4.4.2, 4.4]{hansenbook}. Thus the filtered TSVD solution will be smoother than the TSVD solution when $\alpha>\sigma_k$:  increasing $\alpha$ reduces the covariance but provides more smoothing. Practically it is reasonable to impose the upper  bound $\alphamax\le \sigma_1=1$ for $\alpha$. To limit the noise that can enter the solution it is also desirable to find the lower bound  $\alphamin$. Solutions obtained for $\alpha\in[\alphamin,\alphamax]$, dependent on the spectrum of $A$,   should be sufficiently filtered but retain relatively unfiltered dominant components of the solution. We proceed to determine $\alphamin$ and to give a convergence analysis for $\alpha_k$ as the number of terms in the TSVD is increased.
\subsection{Convergence of $\{\alpha_k\}$ calculated using UPRE}
Denote  the UPRE function \eqref{uprefunc} for the rank $r$ problem by $U(\alpha)=U_r(\alpha)$ and the  optimal $\alpha$ for the filtered TSVD solution with $k$ components on the given interval as
\begin{eqnarray}\label{argmin}
\alpha_k=\mathrm{argmin}_{\alpha \in [\alphamin, \alphamax] } \, U_k(\alpha).
\end{eqnarray}
Ideally it would be helpful to find an interval  $[\alphamin, \alphamax]$ in which $U_k(\alpha)$ is strongly convex, but we have not been able to show this in general. Instead,  in the following we show that a  useful estimate of $\alphamin$ can be found.

For ease of notation within proofs  we use $\phi_i$ and $\gamma_i$ to indicate $\phi_i(\alpha)$ and $\gamma_i(\alpha)$, respectively, and denote differentiation  of a function $f(\alpha)$ with respect to $\alpha$ as $f^\prime$.

\begin{prop}
The following equalities are required for the future discussion. 
\begin{eqnarray}\label{dukalpha}
\frac{\partial U_k}{\partial \alpha} &=& \frac{4}{\alpha}\left(\sum_{i=1}^k s_i^2 \phi^2_i(\alpha) \gamma_i(\alpha) -{\sigma^2} \sum_{i=1}^k\phi_i(\alpha)\gamma_i(\alpha) \right),
\end{eqnarray}
\begin{eqnarray}\label{d2ukalpha}
\frac{\partial^2U_k}{\partial \alpha^2}&=&-\frac{1}{\alpha}\frac{\partial U_k}{\partial \alpha} +  \frac{8}{\alpha^2} \left( \sum_{i=1}^k s_i^2 \phi^2_i(\alpha) \gamma_i(\alpha) (2 \gamma_i(\alpha) - \phi_i(\alpha))  - \nonumber\right.\\
& &\left.\sigma^2 \sum_{i=1}^k \phi_i(\alpha) \gamma_i(\alpha)(\gamma_i(\alpha)-\phi_i(\alpha))\right).
\end{eqnarray}
\end{prop}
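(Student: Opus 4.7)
The proof is a direct differentiation: everything reduces to a single algebraic identity for the derivatives of the filter factors.

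The key observation is that, since $\gamma_i(\alpha)=\sigma_i^2/(\sigma_i^2+\alpha^2)$ and $\phi_i(\alpha)=\alpha^2/(\sigma_i^2+\alpha^2)=1-\gamma_i(\alpha)$, one has
\begin{equation*}
\gamma_i'(\alpha)=-\frac{2\alpha\,\sigma_i^2}{(\sigma_i^2+\alpha^2)^2}=-\frac{2}{\alpha}\,\phi_i(\alpha)\gamma_i(\alpha),\qquad \phi_i'(\alpha)=-\gamma_i'(\alpha)=\frac{2}{\alpha}\,\phi_i(\alpha)\gamma_i(\alpha).
\end{equation*}
So differentiating a product of $\phi_i$'s and $\gamma_i$'s only introduces further factors of $\phi_i\gamma_i/\alpha$, which keeps the bookkeeping under control.

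For \eqref{dukalpha} I would differentiate $U_k(\alpha)=\sum_{i=1}^k s_i^2\phi_i^2+2\sigma^2\sum_{i=1}^k\gamma_i$ term by term, getting $\sum 2 s_i^2 \phi_i\phi_i' + 2\sigma^2\sum\gamma_i'$, and then substitute the identities above. Both sums then carry the common factor $2/\alpha$, producing the $4/\alpha$ outside the parentheses, and the signs give the subtraction of the $\sigma^2$ piece as stated.

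For \eqref{d2ukalpha} the cleanest route is to write $U_k'(\alpha)=\tfrac{4}{\alpha}F(\alpha)$ with
\begin{equation*}
F(\alpha)=\sum_{i=1}^k s_i^2\phi_i^2\gamma_i-\sigma^2\sum_{i=1}^k\phi_i\gamma_i,
\end{equation*}
so that $U_k''=-\tfrac{1}{\alpha}\cdot\tfrac{4}{\alpha}F+\tfrac{4}{\alpha}F'=-\tfrac{1}{\alpha}U_k'+\tfrac{4}{\alpha}F'$, which already produces the first term in \eqref{d2ukalpha}. Then I would compute $F'$ using the product rule, e.g.\ $(\phi_i^2\gamma_i)'=2\phi_i\phi_i'\gamma_i+\phi_i^2\gamma_i'=\tfrac{2}{\alpha}\phi_i^2\gamma_i(2\gamma_i-\phi_i)$ and $(\phi_i\gamma_i)'=\phi_i'\gamma_i+\phi_i\gamma_i'=\tfrac{2}{\alpha}\phi_i\gamma_i(\gamma_i-\phi_i)$. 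Multiplying $F'$ by $4/\alpha$ yields the prefactor $8/\alpha^2$, giving \eqref{d2ukalpha}.

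There is no real obstacle here — the only place one can slip is sign- and factor-tracking when differentiating $\phi_i^2\gamma_i$ and $\phi_i\gamma_i$; packaging the result through the intermediate function $F$ isolates the $-U_k'/\alpha$ contribution and keeps those two pieces from interfering with each other.
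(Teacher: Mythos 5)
Your proposal is correct and follows essentially the same route as the paper: both use the identities $\gamma_i'=-\frac{2}{\alpha}\phi_i\gamma_i=-\phi_i'$, differentiate $U_k$ term by term to get \eqref{dukalpha}, and obtain \eqref{d2ukalpha} by splitting off the $-\frac{1}{\alpha}U_k'$ contribution from the derivative of the $\frac{4}{\alpha}$ prefactor before applying the product rule to $\phi_i^2\gamma_i$ and $\phi_i\gamma_i$. Your naming of the intermediate function $F(\alpha)$ is just a notational repackaging of the paper's intermediate step \eqref{firststep}, with identical computations throughout.
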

\begin{proof}
We use
\begin{equation*}
 \gamma_i^\prime = -\frac{2 \alpha\sigma_i^2}{(\sigma_i^2+\alpha^2)^2}=-\frac{2}{\alpha}\phi_i\gamma_i=-
 \phi_i^\prime <0.
\end{equation*}
Directly differentiating $U_k(\alpha)$  gives \eqref{dukalpha}
\begin{eqnarray*}
  U^{\prime}_k  &=&\sum_{i=1}^k s_i^2 2 \phi_i \phi_i^{\prime} +2 \sigma^2 \sum_{i=1}^k \gamma_i^{\prime} =\frac{4}{\alpha} \left(\sum_{i=1}^k s_i^2 \phi^2_i \gamma_i - \sigma^2 \sum_{i=1}^k \gamma_i \phi_i\right).
\end{eqnarray*}
Likewise for  the second derivative 
\begin{eqnarray}\label{firststep}
 U^{\prime\prime}_k &=&-\frac{1}{\alpha}  U^{\prime}_k + \frac{4}{\alpha} \left( \sum_{i=1}^k s_i^2 (2 \phi_i \phi_i^{\prime}\gamma_i + \phi^2_i\gamma_i^{\prime})-\sigma^2 \sum_{i=1}^k (\phi_i^{\prime}\gamma_i + \phi_i \gamma_i^{\prime})\right),
\end{eqnarray}
giving \eqref{d2ukalpha} after substitution for the derivatives.
\end{proof}
\begin{prop}\label{nomax}
Suppose that $0<\bar{\alpha}< {\sigma_k}/{\sqrt{2}}$ is a stationary point for $U_k(\alpha)$, for any $1\le k\le r$. Then $\bar{\alpha}$ is a unique minimum for $U_k({\alpha})$ on the interval  $0<\bar{\alpha}< {\sigma_k}/{\sqrt{2}}$.

\end{prop}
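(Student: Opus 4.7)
The plan is to reformulate the problem via the change of variables $t = \alpha^2$ and to show that $V_k(t) := U_k(\sqrt t)$ is strictly convex on the open interval $t \in (0, \sigma_k^2/2)$, which corresponds to $\alpha \in (0, \sigma_k/\sqrt{2})$. Because $\alpha \mapsto \alpha^2$ is a smooth, strictly increasing bijection on $(0,\infty)$, stationary points and minimizers of $U_k$ in $\alpha$ correspond exactly to those of $V_k$ in $t$. Once strict convexity of $V_k$ on $(0, \sigma_k^2/2)$ is in hand, the standard convex-analysis fact that a strictly convex $C^2$ function on an open interval admits at most one critical point, and that any such critical point is automatically the unique global minimizer, immediately yields the proposition.

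For the convexity computation I would rewrite the UPRE functional as
\[
V_k(t) = \sum_{i=1}^k s_i^2\,\frac{t^2}{(\sigma_i^2+t)^2} + 2\sigma^2 \sum_{i=1}^k \frac{\sigma_i^2}{\sigma_i^2+t},
\]
and differentiate each summand twice in $t$, obtaining
\[
\frac{d^2}{dt^2}\!\left[\frac{t^2}{(\sigma_i^2+t)^2}\right] = \frac{2\sigma_i^2\,(\sigma_i^2-2t)}{(\sigma_i^2+t)^4}, \qquad \frac{d^2}{dt^2}\!\left[\frac{\sigma_i^2}{\sigma_i^2+t}\right] = \frac{2\sigma_i^2}{(\sigma_i^2+t)^3}.
\]
The second expression is strictly positive for every $i$ and every $t>0$, while the first is strictly positive whenever $t < \sigma_i^2/2$. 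For $i \le k$ and $t \in (0, \sigma_k^2/2)$ one has $t < \sigma_k^2/2 \le \sigma_i^2/2$, so both hold. Hence every summand of $V_k''$ is nonnegative on this interval, and the $\gamma_i$ contributions alone already force $V_k''(t) > 0$, independently of the data coefficients $s_i$.

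To close the argument, any stationary point $\bar\alpha \in (0,\sigma_k/\sqrt{2})$ of $U_k$ produces a critical point $\bar t = \bar\alpha^2 \in (0, \sigma_k^2/2)$ of $V_k$; strict convexity forces $\bar t$ to be the unique global minimizer of $V_k$ on this interval, and translating back delivers the uniqueness claim for $\bar\alpha$. The one delicate point is the threshold $\sigma_k/\sqrt{2}$: the $\phi_i^2$ summands lose convexity precisely where $t > \sigma_i^2/2$, which is exactly why the bound cannot be pushed beyond $\sigma_k/\sqrt{2}$, and indeed the $k$ in $\sigma_k$ (rather than $\sigma_r$) is forced by the worst-case index. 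An alternative route working directly from \eqref{d2ukalpha}---substituting the stationarity relation $\sum s_i^2\phi_i^2\gamma_i = \sigma^2 \sum \phi_i\gamma_i$ and checking pointwise positivity---is possible but would only establish that $\bar\alpha$ is a local minimum, requiring a separate monotonicity argument on $U_k'$ to rule out a second stationary point. The change-of-variables approach bypasses this entirely and is the main idea I would deploy.
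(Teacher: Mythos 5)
Your proof is correct, and it takes a genuinely different route from the paper's. The paper stays in the variable $\alpha$: it evaluates the second derivative \eqref{d2ukalpha} at a stationary point $\bar\alpha$, drops the first-derivative term, uses the stationarity relation $\sum_{i=1}^k s_i^2\phi_i^2(\bar\alpha)\gamma_i(\bar\alpha)=\sigma^2\sum_{i=1}^k\phi_i(\bar\alpha)\gamma_i(\bar\alpha)$ from \eqref{dukalpha} to compare the two sums, and observes that positivity of all terms is guaranteed when $1-3\phi_i(\bar\alpha)>0$ for all $i\le k$, which by monotonicity of $\phi_i$ in $i$ reduces to $\phi_k(\bar\alpha)<1/3$, i.e.\ exactly $\bar\alpha<\sigma_k/\sqrt{2}$; it then closes the uniqueness gap not by monotonicity of $U_k'$ (as you guessed) but by the topological observation that every stationary point in the interval is a minimum, so a second minimum would force an interior maximum, which the same computation forbids. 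Your substitution $t=\alpha^2$ is cleaner and yields a strictly stronger conclusion: the termwise identities $\frac{d^2}{dt^2}\left[t^2/(\sigma_i^2+t)^2\right]=2\sigma_i^2(\sigma_i^2-2t)/(\sigma_i^2+t)^4\ge 0$ for $t<\sigma_k^2/2\le\sigma_i^2/2$, together with the strictly positive trace-term contribution $4\sigma^2\sigma_i^2/(\sigma_i^2+t)^3$, give strict convexity of $V_k$ on all of $(0,\sigma_k^2/2)$, from which the unique-critical-point-equals-unique-minimizer conclusion is immediate, with no case analysis about maxima and no appeal to continuity between stationary points. This is notable because the paper explicitly remarks that it could not establish convexity of $U_k(\alpha)$ on a useful interval; your argument shows convexity does hold after the monotone reparametrization $t=\alpha^2$ (which does not transfer back to convexity in $\alpha$, so there is no contradiction with the paper's remark), and the monotone bijection between $(0,\sigma_k/\sqrt2)$ and $(0,\sigma_k^2/2)$ correctly transports stationary points and minimizers. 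Two minor caveats, neither a gap: strictness of $V_k''>0$ uses $\sigma^2>0$ (or some $s_i\ne 0$), which holds under the paper's standing noise assumptions; and your closing claim that the bound ``cannot be pushed beyond $\sigma_k/\sqrt{2}$'' is really a statement about where the termwise positivity argument fails, not a proof that convexity must fail beyond it for all data.
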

\begin{proof}
Removing the first term from \eqref{firststep}, identically zero at $\alpha=\bar{\alpha}$ by assumption that $\bar{\alpha}$ is  a stationary point,  gives
\begin{eqnarray*} \nonumber
\frac{\partial^2U_k}{\partial \alpha^2}(\bar{\alpha})&=& \frac{8}{\bar{\alpha}^2} \left( \sum_{i=1}^k s_i^2 \phi^2_i(\bar{\alpha}) \gamma_i(\bar{\alpha}) (2 \gamma_i(\bar{\alpha}) - \phi_i(\bar{\alpha}))  - \sigma^2 \sum_{i=1}^k \phi_i(\bar{\alpha}) \gamma_i(\bar{\alpha})(\gamma_i(\bar{\alpha})-\phi_i(\bar{\alpha}))\right) \\
&=& \frac{8}{\bar{\alpha}^2}\left( \sum_{i=  1}^k s_i^2  \phi^2_i(\bar{\alpha})\gamma_i(\bar{\alpha})(2 -3\phi_i(\bar{\alpha}))-\sigma^2 \sum_{i=  1}^k \phi_i(\bar{\alpha})\gamma_i(\bar{\alpha})(1-2\phi_i(\bar{\alpha}))\right).
\end{eqnarray*}
Now we substitute for $\sum_{i=1}^k s_i^2 \phi^2_i(\bar{\alpha}) \gamma_i(\bar{\alpha}) = \sigma^2 \sum_{i=1}^k \gamma_i(\bar{\alpha}) \phi_i(\bar{\alpha})$ using   \eqref{dukalpha}  at $\bar{\alpha}$  and note all terms are positive for $1-3\phi_i(\bar{\alpha})>0$, $i=1:k$. But $\phi_i$ is increasing with $i$ due to the ordering of the $\sigma_i$. Thus $1-3\phi_i(\bar{\alpha})\ge  1-3\phi_k(\bar{\alpha})>0$ for $\bar{\alpha}<\sigma_k/\sqrt{2}$ and $U_k^{\prime\prime}(\bar{\alpha})>0$. This result is true for any stationary point $\bar{\alpha}$ on the interval. Hence $U_k(\bar{\alpha})$  is a minimum for $U_k(\alpha)$ and it is only possible to have a maximum at $\alpha=0$, the end point of the given interval, but the end point is explicitly excluded from consideration. There are therefore no other stationary points within the interval and the minimum is unique. 
\end{proof}
\begin{remark} Although a minimum must exist in $[0, \sigma_k/\sqrt{2}]$ because $U_k(\alpha)$ is a continuous function on a compact set,   this result does not show that a minimum exists in $(0, \sigma_k/\sqrt{2})$ . \end{remark}

The next steps in the analysis rely on the following Assumptions \ref{assume1}-\ref{assume2} about the model and the data. 
\begin{assumption*}[Decay Rate\label{assume1}, \cite{Hansen:98,hofmann1986regularization}] The measured coefficients decay according to
$s^2_i=\sigma_i^{2(1+\nu)}> \sigma^2$  for $0<\nu<1$, $1\le i \le \ell$, i.e. the dominant measured coefficients follow the decay rate of the exact coefficients.
\end{assumption*}
\begin{assumption*}[Noise in Coefficients\label{assume2}]
There exists $\ell$ such that $E(s_i^2)=\sigma^2$ for all $i>\ell$, i.e. that the coefficients $s_i$ are noise dominated for $i>\ell$. Moreover, when $i\le \ell$ we assume that $E(s_i^2)\approx s_i^2$, so that the larger coefficients are effectively deterministic.
\end{assumption*}
These assumptions  have also been used in \cite{Hansen:98} for understanding how decay rates impact the convergence of iterative methods. 
We also recall that we use the non-restrictive  normalization $\sigma_1=1$ and use the notation $E(a)$ for the expectation of scalar deterministic $a$.

For the remaining results we  distinguish between the terms in the UPRE function  that are, and are not, contaminated by noise.
\begin{prop}\label{propukprime}
Suppose  Assumption~\ref{assume2} holds, then for $r> k+1>\ell$ there is a an upper bound on $E(U_k^\prime)$ independent of $\alpha$: 
\begin{eqnarray}\label{expectdu}
E(\frac{\partial U_{r}}{\partial \alpha} ) <  \dots <E(\frac{\partial U_{k+1}}{\partial \alpha} ) &<& E(\frac{\partial U_k}{\partial \alpha} ) < \frac{\partial U_{\ell}}{\partial \alpha} \,\,  \forall \,\, \alpha.
\end{eqnarray}
The lower bound for $E(U_k^{\prime\prime})$ holds for a fixed lower bound on $\alpha$
\begin{eqnarray}  \label{expectd2up}
E(\frac{\partial^2 U_{r}}{\partial \alpha^2} )> \dots > E(\frac{\partial^2 U_{k+1}}{\partial \alpha^2} )&>&E(\frac{\partial^2 U_k}{\partial \alpha^2} )> \frac{\partial^2 U_{\ell}}{\partial \alpha^2} \text{ if } \alpha>\frac{\sigma_{\ell+1}}{\sqrt{5}}, 
\end{eqnarray} 
whereas the upper bound depends also on an upper bound on $\alpha$ that decreases with increasing $k$
\begin{eqnarray} \label{expectd2un}
 E(\frac{\partial^2 U_{k+1}}{\partial \alpha^2})&<&  E(\frac{\partial^2 U_k}{\partial \alpha^2})    \,\, \text{if }   \,\, \alpha<\frac{\sigma_{k+1}}{\sqrt{5}}.
 \end{eqnarray}

\end{prop}
\begin{proof}
We note that the expectation operator is linear and when $a$ is not a random variable $E(a)=a$. Applying these properties first to   \eqref{dukalpha} yields
\begin{eqnarray*}
E(  U^\prime_k )
&=& E\left( U^\prime_{\ell}  +\frac{4}{\alpha}\sum_{i=\ell+1}^k\phi_i\gamma_i (s_i^2 \phi_i -\sigma^2)\right)\\ 
&\approx &  U^\prime_{\ell} +  \frac{4\sigma^2}{\alpha}\sum_{i=\ell+1}^k\phi_i\gamma_i (\phi_i -1)  <  U^\prime_{\ell},
\end{eqnarray*}
where from line one to two we use linearity, and, by Assumption~\ref{assume2}, $E(U^\prime_{\ell})= U^\prime_{\ell}$ and $E(s_i^2)=\sigma^2$ for $i>\ell$. 
In particular, in expectation each term for $i>\ell$ is negative and recursively both inequalities in \eqref{expectdu} apply.
Applying the expectation operator now to \eqref{d2ukalpha}  gives
\begin{eqnarray*}
E(  U^{\prime\prime}_k  )&\approx & \frac{\partial^2 U^{\prime\prime}_{\ell}}{\partial \alpha^2} +\frac{4\sigma^2}{\alpha^2}\left(\sum_{i=\ell+1}^k\phi_i\gamma_i (1-\phi_i ) +2 \left(\phi_i^2 \gamma_i (2 \gamma_i-\phi_i)-\phi_i\gamma_i(\gamma_i-\phi_i)\right)\right)\\
&=&   U^{\prime\prime}_{\ell}  +\frac{4\sigma^2}{\alpha^2}\left(\sum_{i=\ell+1}^k\phi_i\gamma_i \left(1-\phi_i  +2 \left(\phi_i  (2 \gamma_i-\phi_i)-(\gamma_i-\phi_i)\right)\right)\right)\\
&=&    U^{\prime\prime}_{\ell}  +\frac{4\sigma^2}{\alpha^2}\left(\sum_{i=\ell+1}^k\phi_i\gamma_i \left(1 -\phi_i  +2 \left(\phi_i  (2 -3 \phi_i)-(1-2\phi_i)\right)\right)\right)\\
&=&    U^{\prime\prime}_{\ell}  +\frac{4\sigma^2}{\alpha^2}\left(\sum_{i=\ell+1}^k\phi_i\gamma_i \left(-6\phi^2_i +7\phi_i  -1\right)\right).
\end{eqnarray*}
The sign of the second term depends on the sign of $-6\phi^2_i +7\phi_i  -1 $ which is increasing from $-1$ as a function of $\phi\le 1$. Hence
\begin{eqnarray*}
-6\phi^2_i +7\phi_i  -1 \left\{ \begin{array}{llcl}
\ge   -6\phi^2_{\ell+1} +7\phi_{\ell+1}  -1 & =\frac{\sigma_{\ell+1}^2(5 \alpha^2-\sigma_{\ell+1}^2)}{(\alpha^2+\sigma_{\ell+1}^2)^2}&>  0 & \text{ if } \alpha >\frac{\sigma_{\ell+1}}{\sqrt{5}} \\
\le  -6\phi^2_{k} +7\phi_{k}  -1 & =\frac{\sigma_{k}^2(5 \alpha^2-\sigma_{k}^2)}{(\alpha^2+\sigma_{k}^2)^2}& < 0 &\text{ if } \alpha < \frac{\sigma_{k}}{\sqrt{5}}. \end{array}\right.
\end{eqnarray*}
Again, in expectation, terms for $i>\ell$ are all positive when $\alpha\ge {\sigma_{\ell+1}}/{\sqrt{5}}$ and the nested inequalities in \eqref{expectd2up} apply. The requirement that the $i^{th}$ term is necessarily positive becomes more severe as $i$ increases, yielding the additional  inequality with conditions on $\alpha$ given in  \eqref{expectd2un}.
\end{proof}

\begin{corollary}\label{cor2}
Suppose  Assumption~\ref{assume2} holds, and that for $\alpha_{\ell}>{\sigma_{\ell+1}}/{\sqrt{5}}$,     $U_{\ell}(\alpha_\ell)$ is a minimum for $U_\ell(\alpha)$. Then for $\ell<k\le r$, $U_k(\alpha)$ is convex and decreasing at $\alpha_\ell$,
$$
 E(\frac{\partial U_k(\alpha_{\ell})}{\partial \alpha} )<0  \quad \text{and} \quad E(\frac{\partial^2 U_k(\alpha_{\ell})}{\partial \alpha^2} )>  0.
$$
\end{corollary}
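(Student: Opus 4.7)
The plan is to recognize this corollary as an immediate specialization of Proposition~\ref{propukprime} at the specific point $\alpha = \alpha_\ell$, where the first- and second-order optimality conditions for the hypothesized minimum of $U_\ell$ collapse the right-hand baselines of \eqref{expectdu} and \eqref{expectd2up} to $0$ and a non-negative number, respectively. The strict nested inequalities in Proposition~\ref{propukprime} then push $E(\partial U_k/\partial\alpha)$ and $E(\partial^2 U_k/\partial\alpha^2)$ across zero in the desired directions.

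Concretely, I would proceed in two steps. For the first-derivative assertion, I would invoke the first-order necessary condition $\partial U_\ell(\alpha_\ell)/\partial\alpha = 0$ and chain it with the rightmost inequality in \eqref{expectdu}, applied for any $k$ with $\ell < k \le r$, to obtain
\begin{equation*}
E\!\left(\frac{\partial U_k(\alpha_\ell)}{\partial \alpha}\right) < \frac{\partial U_\ell(\alpha_\ell)}{\partial \alpha} = 0.
\end{equation*}
For the second-derivative assertion, the hypothesis $\alpha_\ell > \sigma_{\ell+1}/\sqrt{5}$ is exactly the activation condition required to invoke \eqref{expectd2up}. Combining it with the second-order necessary condition $\partial^2 U_\ell(\alpha_\ell)/\partial\alpha^2 \ge 0$ for a minimum gives
\begin{equation*}
E\!\left(\frac{\partial^2 U_k(\alpha_\ell)}{\partial \alpha^2}\right) > \frac{\partial^2 U_\ell(\alpha_\ell)}{\partial \alpha^2} \ge 0.
\end{equation*}

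The only point requiring care, and what I anticipate as the mild obstacle, is the preservation of strict inequality. Strictness in the chained bounds of Proposition~\ref{propukprime} relies on at least one index $i$ with $\ell < i \le k$ contributing a strictly signed expected increment, namely $\tfrac{4\sigma^2}{\alpha}\phi_i\gamma_i(\phi_i - 1) < 0$ for the first derivative and $\tfrac{4\sigma^2}{\alpha^2}\phi_i\gamma_i(-6\phi_i^2 + 7\phi_i - 1) > 0$ for the second derivative. Since $k > \ell$ guarantees the sum is nonempty, and since $\sigma_i \le \sigma_{\ell+1}$ for $i \ge \ell + 1$ ensures $\alpha_\ell > \sigma_{\ell+1}/\sqrt{5} \ge \sigma_i/\sqrt{5}$ for every such $i$, the sign of each increment is uniformly the correct one. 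Hence strict inequality survives and both claims follow.
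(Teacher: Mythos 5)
Your proof is correct and follows essentially the same route as the paper, which likewise combines the optimality conditions $U_\ell^\prime(\alpha_\ell)=0$ and $U_\ell^{\prime\prime}(\alpha_\ell)>0$ with the chained inequalities \eqref{expectdu} and \eqref{expectd2up} of Proposition~\ref{propukprime}. Your added care with strictness is in fact slightly more rigorous than the paper's one-line argument, since you only need the weak second-order condition $U_\ell^{\prime\prime}(\alpha_\ell)\ge 0$ and let the strict inequality in \eqref{expectd2up} (nonempty sum for $k>\ell$, activated by $\alpha_\ell>\sigma_{\ell+1}/\sqrt{5}$) carry the conclusion.
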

\begin{proof}
If $U_\ell(\alpha_{\ell})$ is  a minimum, then  $ U^\prime_\ell(\alpha_{\ell})  =0$ and $   U^{\prime\prime}_{\ell}  (\alpha_{\ell})>0$ and the inequalities follow immediately from \eqref{expectdu} and \eqref{expectd2up}.
\end{proof}
\begin{corollary}\label{nomin}
Suppose  Assumption~\ref{assume2} holds. If a stationary point $\alpha_r < \sigma_r/\sqrt{5}$ exists there are no stationary points of $U_k(\alpha)$ for $\alpha \in (\sigma_r/\sqrt{5},\sigma_k/\sqrt{2})$. 
\end{corollary}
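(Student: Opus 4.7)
The plan is to proceed by contradiction, using Proposition~\ref{nomax} at both levels $r$ and $k$ to constrain where stationary points can lie, and using the ordering \eqref{expectdu} from Proposition~\ref{propukprime} to transfer information about $U_r'$ at $\alpha_r$ to $U_k'$ at $\alpha_r$.

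First I would dispose of the case $k=r$ directly. Since $\alpha_r<\sigma_r/\sqrt{5}<\sigma_r/\sqrt{2}$, Proposition~\ref{nomax} at level $r$ identifies $\alpha_r$ as the unique stationary point of $U_r$ on $(0,\sigma_r/\sqrt{2})$; the subinterval $(\sigma_r/\sqrt{5},\sigma_r/\sqrt{2})$ does not contain $\alpha_r$, so it contains no stationary point of $U_r$.

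For $\ell<k<r$ I would suppose, for contradiction, that some $\tilde\alpha\in(\sigma_r/\sqrt{5},\sigma_k/\sqrt{2})$ satisfies $U_k'(\tilde\alpha)=0$. Proposition~\ref{nomax} applied at level $k$ forces $\tilde\alpha$ to be the unique stationary point of $U_k$ on $(0,\sigma_k/\sqrt{2})$ and a strict local minimum with $U_k''(\tilde\alpha)>0$, so $U_k'$ is strictly negative just to the left of $\tilde\alpha$. Evaluating \eqref{expectdu} at $\alpha_r$, where $U_r'(\alpha_r)=0$, then gives $E(U_k'(\alpha_r))>E(U_r'(\alpha_r))=0$; following the convention already adopted in Corollary~\ref{cor2}, I would read this as $U_k'(\alpha_r)>0$ for the realization under study. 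Since $\alpha_r<\sigma_r/\sqrt{5}<\tilde\alpha<\sigma_k/\sqrt{2}$, both $\alpha_r$ and points just below $\tilde\alpha$ lie inside $(0,\sigma_k/\sqrt{2})$, and the continuous function $U_k'$ must change sign on $[\alpha_r,\tilde\alpha)$. The intermediate value theorem then produces a second stationary point of $U_k$ in $(0,\sigma_k/\sqrt{2})$, contradicting the uniqueness supplied by Proposition~\ref{nomax} and completing the proof.

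The hard part will be the passage from the expectation inequality $E(U_k'(\alpha_r))>0$ to the pointwise inequality $U_k'(\alpha_r)>0$ needed by the intermediate value argument. This is exactly the step already taken in Corollary~\ref{cor2}, where the expected-value comparisons from Proposition~\ref{propukprime} are treated as essentially deterministic once the averaging over the noisy coefficients $s_i$ with $i>\ell$ has been carried out; the writeup should mirror that convention explicitly rather than introduce new probabilistic estimates.
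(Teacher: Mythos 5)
Your proposal is correct and follows essentially the same route as the paper: both argue by contradiction, using \eqref{expectdu} to transfer $U_r'(\alpha_r)=0$ into $U_k'(\alpha_r)>0$ (under the paper's convention of reading the expectation inequalities pointwise), and then combining Proposition~\ref{nomax} with continuity and the intermediate value theorem to exclude any further stationary point below $\sigma_k/\sqrt{2}$. The only cosmetic differences are that you phrase the contradiction via the uniqueness conclusion of Proposition~\ref{nomax} rather than via the absence of maxima, and you omit the paper's inessential appeal to \eqref{expectd2un} for $U_k''(\alpha_r)>0$.
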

\begin{proof}
Suppose that $\alpha_r \in [0,  \sigma_r/\sqrt{5})$. The existence of $\alpha_r$ in this interval  does not contradict  Proposition~\ref{nomax} since  $\sigma_r/\sqrt{5}<\sigma_r/\sqrt{2}$.  By assumption,  $U^{\prime}_r(\alpha_r)=0$ and $U^{\prime\prime}_r(\alpha_r)>0$. Thus  by    \eqref{expectdu} and \eqref{expectd2un} $U^{\prime}_k(\alpha_r)>0$ and $U^{\prime\prime}_k(\alpha_r)>0$, and  $U_k(\alpha)$, $\ell \le k \le r-1$ is convex and increasing at $\alpha_r$.  Therefore, by continuity, $U_k(\alpha)$ cannot reach a minimum for $\alpha_r<\alpha_k< \sigma_k/\sqrt{2} $ without first passing through a stationary point which is a maximum. But by Proposition~\ref{nomax} there is no maximum of $U_k(\alpha)$ to the left of $\sigma_k/\sqrt{2}$ and thus there is also no minimum for $\alpha_r<\alpha< \sigma_k/\sqrt{2} $.  In particular $U_k(\alpha)$ has no stationary point for $\sigma_r/\sqrt{5}\le \alpha\le \sigma_k/\sqrt{2}$.
\end{proof}
\begin{remark}
We have shown through Corollary~\ref{nomin} that if  $U_r(\alpha_r)$ is a minimum for $U_r(\alpha)$ and $\alpha_r < \sigma_r/\sqrt{5}$ then $U_k(\alpha_k)$ can only be a minimum for $U_k(\alpha)$ if  either $\alpha_k\le \alpha_r\le \sigma_r/\sqrt{5}$ or $\alpha_k>\sigma_k/\sqrt{2}$, i.e. we may require  $\alpha_k > \sigma_k/\sqrt{2}$ under the assumption that we seek $\alpha_r>\sigma_r$. This applies for all $k$ with $1\le \ell \le k\le r-1$.
\end{remark}
Although this result does provide a refined lower bound for $\alpha_k$, it is dependent on $k$ and decreasing with $k$, which is not helpful when $k$ gets large, as needed for finding $ \alpha_r$, i.e. this bound would suggest that $ \alpha_r$ needs to be found  using the pessimistic lower bound $\sigma_r/\sqrt{2}$. We investigate now whether these lower bounds on $\alpha$ are indeed realistic by looking for  bounds on the UPRE functions $U_k(\alpha)$.
\begin{prop}\label{boundonsU}
Suppose Assumptions~\ref{assume1} and \ref{assume2} hold,  then lower and upper bounds on $U_k(\alpha)$ and its derivatives are given by $\mathcal{L}_k(\alpha)$ and $\mathcal{U}_k(\alpha)$ and their derivatives, respectively, where
 \begin{align}\label{Ubounds}
 0<\mathcal{L}_k(\alpha)=G(\alpha)+ F_k(\alpha)&< E(U_k(\alpha))<H(\alpha)+F_k(\alpha)=\mathcal{U}_k(\alpha)  \\  
\mathcal{L}_k^\prime(\alpha)= G^\prime(\alpha)+ F^{\prime}_k(\alpha)&< E(U^{\prime}_k(\alpha))<H^{\prime}(\alpha)+F^{\prime}_k(\alpha)=\mathcal{U}_k^\prime(\alpha) , \label{Uprimebounds} \\
\mathcal{L}_k^{\prime\prime}(\alpha)= G^{\prime\prime}(\alpha)+ F^{\prime\prime}_k(\alpha)&< E(U^{\prime\prime}_k(\alpha))<H^{\prime\prime}(\alpha)+F^{\prime\prime}_k(\alpha)=\mathcal{U}_k^{\prime\prime}(\alpha), \text{ for } \alpha\le\sigma_\ell \text{ but } \label{U2primebounds} \\\nonumber
\mathcal{U}_k^{\prime\prime}(\alpha)= H^{\prime\prime}(\alpha)+ F^{\prime\prime}_k(\alpha)&< E(U^{\prime\prime}_k(\alpha))<G^{\prime\prime}(\alpha)+F^{\prime\prime}_k(\alpha)=\mathcal{L}_k^{\prime\prime}(\alpha), \text{ for } \alpha>1.
\end{align}
Here $G(\alpha)$ and $H(\alpha)$ are independent of $k$, while $F_k(\alpha)$ very clearly depends on the $k$ terms in the sums as given by
\begin{eqnarray}
G(\alpha)&=&  \alpha^4 \sum_{i=1}^{\ell} \gamma^2_i,\, \, H(\alpha)= \alpha^2 \sum_{i=1}^{\ell} \phi_i \gamma_i, \,\label{H12} \text{ and }\\
 F_k(\alpha)&=&\sigma^2 \left((k-\ell) + 2\sum_{i=1}^{\ell}\gamma_i + \sum_{i=\ell+1}^{k} \gamma_i^2  \right).\label{H3}
\end{eqnarray}
 \end{prop}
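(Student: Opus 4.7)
The plan is to derive the three building blocks $G$, $H$, and $F_k$ directly from $E(U_k(\alpha))$, and then transfer the bounds through differentiation using a sign analysis of the derivatives of $\phi_i^2$. First I would substitute Assumption~\ref{assume1} ($s_i^2 = \sigma_i^{2(1+\nu)}$ for $i \le \ell$) and Assumption~\ref{assume2} ($E(s_i^2) = \sigma^2$ for $i > \ell$) into \eqref{uprefunc} to obtain
\begin{equation*}
E(U_k(\alpha)) = \sum_{i=1}^{\ell} \phi_i^2(\alpha)\,\sigma_i^{2(1+\nu)} + \sigma^2 \sum_{i=\ell+1}^{k} \phi_i^2(\alpha) + 2\sigma^2 \sum_{i=1}^{k} \gamma_i(\alpha).
\end{equation*}
Using $\phi_i = 1 - \gamma_i$, the termwise identity $\sigma^2 \phi_i^2 + 2\sigma^2 \gamma_i = \sigma^2(1 + \gamma_i^2)$ collapses the last two noise sums exactly into $F_k(\alpha)$ as defined in \eqref{H3}, so $E(U_k(\alpha)) = \sum_{i=1}^{\ell} \phi_i^2 \sigma_i^{2(1+\nu)} + F_k(\alpha)$.

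Next, the identity $\phi_i \sigma_i^2 = \alpha^2 \gamma_i$ rewrites the functions in \eqref{H12} as $G(\alpha) = \sum_{i=1}^{\ell} \phi_i^2 \sigma_i^4$ and $H(\alpha) = \sum_{i=1}^{\ell} \phi_i^2 \sigma_i^2$, so bounding $E(U_k(\alpha))$ reduces to sandwiching $\phi_i^2 \sigma_i^{2(1+\nu)}$ termwise between these. The normalization $\sigma_1 = 1$ forces $\sigma_i \le 1$ and $\nu \in (0,1)$ orders the exponents as $2 < 2(1+\nu) < 4$, so monotonicity of $t \mapsto \sigma_i^t$ on $(0,1]$ gives $\sigma_i^4 \le \sigma_i^{2(1+\nu)} \le \sigma_i^2$. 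Multiplying by $\phi_i^2 \ge 0$ and summing over $i \le \ell$ yields \eqref{Ubounds}; positivity of $\mathcal{L}_k$ is immediate since all terms in $G$ and $F_k$ are nonnegative and $F_k > 0$ whenever $\alpha > 0$.

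The derivative bounds rest on the observation that for each $i \le \ell$ the three quantities $\phi_i^2 \sigma_i^4$, $\phi_i^2 \sigma_i^{2(1+\nu)}$, and $\phi_i^2 \sigma_i^2$ differ only by positive constants times the common function $\phi_i^2(\alpha)$, so the ordering is preserved or reversed under differentiation according to the sign of the corresponding derivative of $\phi_i^2$. A direct computation gives
\begin{equation*}
(\phi_i^2)'(\alpha) = \frac{4\alpha^3 \sigma_i^2}{(\sigma_i^2 + \alpha^2)^3}, \qquad (\phi_i^2)''(\alpha) = \frac{12\alpha^2 \sigma_i^2(\sigma_i^2 - \alpha^2)}{(\sigma_i^2 + \alpha^2)^4}.
\end{equation*}
The first is strictly positive, immediately giving \eqref{Uprimebounds}. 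The second has the sign of $\sigma_i^2 - \alpha^2$: for $\alpha \le \sigma_\ell \le \sigma_i$ (all $i \le \ell$) it is nonnegative so the ordering is preserved, while for $\alpha > 1 = \sigma_1 \ge \sigma_i$ it is nonpositive and the ordering reverses, yielding both lines of \eqref{U2primebounds}. The main obstacle is the algebraic bookkeeping in the first step to confirm that the noise-dominated terms collapse precisely into $F_k$ in the form \eqref{H3}; once that is done, everything reduces to pointwise scalar inequalities and a single sign check on $(\phi_i^2)''$ that cleanly produces the regime split between $\alpha \le \sigma_\ell$ and $\alpha > 1$.
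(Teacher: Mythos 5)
Your proposal is correct and follows essentially the same route as the paper: the identical decomposition $E(U_k(\alpha))=\sum_{i\le\ell}\phi_i^2 s_i^2+F_k(\alpha)$ with the noise terms collapsing into \eqref{H3}, and the identical termwise sandwich $\sigma_i^4\le s_i^2\le\sigma_i^2$ from Assumption~\ref{assume1} after rewriting $G=\sum_{i\le\ell}\sigma_i^4\phi_i^2$ and $H=\sum_{i\le\ell}\sigma_i^2\phi_i^2$ via $\sigma_i^2\phi_i=\alpha^2\gamma_i$. Your handling of the derivative bounds—factoring the gap functions as $\sum_i c_i\,\phi_i^2(\alpha)$ with positive constants $c_i$ and checking the sign of $(\phi_i^2)'>0$ and $(\phi_i^2)''\propto\sigma_i^2-\alpha^2$—is a cleaner packaging of, but mathematically equivalent to, the paper's explicit computation of $D_1',D_2',D_1'',D_2''$, whose governing factor $1-2\phi_i=(\sigma_i^2-\alpha^2)/(\sigma_i^2+\alpha^2)$ encodes exactly your sign condition and the same regime split between $\alpha\le\sigma_\ell$ and $\alpha>1$.
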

\begin{proof}
By  \eqref{Picard} due to Assumption~\ref{assume1} for $i\le\ell$
\begin{eqnarray}\label{boundsi1}
\sigma_i^4<\sigma_i^{2(1+\nu)}=s_i^2 <\sigma_i^2.
\end{eqnarray}
Thus
\begin{eqnarray}\label{boundsi2}
\alpha^4 \gamma_i^2=\sigma_i^4 \phi_i^2 <\phi_i^2(\alpha) s_i^2< \sigma_i^2 \phi_i^2 = \alpha^2 \phi_i \gamma_i.
 \end{eqnarray}
Now from \eqref{uprefunc}
\begin{eqnarray*}
E(U_{\ck}(\galpha))&=& \sum_{i=1}^{\ell} \phi_i^2 s_i^2+\sigma^2  (2\sum_{i=1}^{\ck}\gamma_i(\galpha)+\sum_{i=\ell+1}^{k} \phi_i^2) =\sum_{i=1}^{\ell} \phi_i^2 s_i^2+F_k(\alpha),
\end{eqnarray*}
may be bounded  using \eqref{boundsi1}. This   yields immediately \eqref{Ubounds} with the noted definitions for  $G$, $H$ and $F_k$, as given in \eqref{H12}-\eqref{H3}.

To show \eqref{Uprimebounds} introduce $D_i(\alpha)>0$, $i=1$, $2$, given by
 \begin{eqnarray*}
 D_1(\alpha) &=& E(U_k(\alpha)) -(G(\alpha)+ F_k(\alpha)) =\sum_{i=1}^\ell( \phi_i^2s_i^2 -\alpha^4 \gamma_i^2) \\
  D_2(\alpha) &=& (H(\alpha)+ F_k(\alpha))-E(U_k(\alpha)) =\sum_{i=1}^\ell( \alpha^2 \phi_i\gamma_i -\phi_i^2s_i^2 ).
 \end{eqnarray*}
Then $D_i$ are independent of $k$ and
 \begin{eqnarray*} 
 D^{\prime}_1(\alpha)&=&\sum_{i=1}^\ell \left(\frac{2}{\alpha}( 2\phi^2_i\gamma_is_i^2 +2 \alpha^4    \gamma^2_i \phi_i)  -4\alpha^3\gamma_i^2\right)  =\frac{4}{\alpha}  \sum_{i=1}^\ell (\phi^2_i\gamma_is_i^2 - \alpha^4 \gamma^3_i )\text{ and } \\  
  D^{\prime}_2(\alpha)&=&\sum_{i=1}^\ell( 2\alpha  \phi_i\gamma_i +  \frac{2}{\alpha} (\alpha^2\phi_i \gamma_i (1-2\phi_i) -2 \phi_i^2\gamma_is_i^2) ) =\frac{4}{\alpha} \sum_{i=1}^\ell (\alpha^2 \phi_i\gamma_i^2 -s_i^2 \phi_i^2\gamma_i).
 \end{eqnarray*}
But now again applying  Assumption~\ref{assume1} we have
\begin{eqnarray}\label{boundsi3}
 \alpha^4  \gamma_i^3=\sigma_i^4 \phi^2_i\gamma_i < {s^2_i} \phi^2_i\gamma_i < \sigma_i^2 \phi^2_i\gamma_i = \alpha^2 \phi_i \gamma_i^2.\end{eqnarray}
 Therefore $D_i^{\prime}(\alpha)>0$, $i=1$, $2$ and we immediately obtain \eqref{Uprimebounds}.

 The second derivative result follows similarly using
 \begin{eqnarray*} 
 D^{\prime\prime}_1(\alpha)
 &=& \frac{12}{\alpha^2}\sum_{i=1}^\ell   \gamma_i (1-2\phi_i)(s_i^2\phi_i^2 -\alpha^4\gamma_i^2)>0 \\
  D^{\prime\prime}_2(\alpha)
   &=& \frac{12}{\alpha^2}\sum_{i=1}^\ell   \phi_i\gamma_i (1-2\phi_i)(\gamma_i\alpha^2 - s_i^2\phi_i)>0,
 \end{eqnarray*}
 where in each case we apply \eqref{boundsi3} and note $1-2\phi_i\ge 0$, for $1\le i\le \ell$ and $\alpha\le \sigma_\ell$. This then immediately gives the reverse inequalities for $\alpha>1$.
  \end{proof}

From \eqref{H12}-\eqref{H3} we see that we may write $G$, $H$ and $F_k$  in terms of sums $S_p(i_1,i_2)=\sum_{i=i_1}^{i_2} \gamma^p_i$ for $p=1$ and $p=2$ by writing $\phi_i\gamma_i=\gamma_i-\gamma_i^2$. Hence 
\begin{align*}
G(\alpha)=\alpha^4 S_2(1,\ell), \,\, H(\alpha)=\alpha^2(S_1(1,\ell)-S_2(1,\ell)) \quad \mathrm{and}\\
 \,\, F_k(\alpha)=\sigma^2 ( k-\ell +2S_1(1,\ell)+S_2(\ell+1,k)  ). 
\end{align*}
 Thus for $U_\ell(\alpha)$ we have the bounding functions by Proposition~\ref{propukprime}
\begin{eqnarray*}
\mathcal{L}_\ell(\alpha)&=&G(\alpha)+F_\ell(\alpha)=\alpha^4 S_2+ 2\sigma^2S_1 \\
\mathcal{U}_\ell(\alpha)&=&H(\alpha)+F_\ell(\alpha)=\alpha^2(S_1-S_2)+ 2\sigma^2S_1,
\end{eqnarray*}
where the sums all range from $1$ to $\ell$.
Moreover, also by Proposition~\ref{propukprime}, $\mathcal{L}_\ell^\prime(\alpha)<U_\ell(\alpha)<\mathcal{U}_\ell^\prime(\alpha)$ where
\begin{eqnarray*}
\mathcal{L}_\ell^\prime(\alpha)&=&4\alpha^3 S_2+ \alpha^4S_2^\prime + 2\sigma^2S^\prime_1 = 4\alpha^3 (S_2 +S_3-S_2)+\frac{4\sigma^2}{\alpha}(S_2-S_1)\\
&=&\frac{4}{\alpha}(\alpha^4 S_3+\sigma^2(S_2-S_1)) \\
\mathcal{U}_\ell^\prime(\alpha)&=&2\alpha(S_1-S_2)+ \alpha^2(S_1^\prime-S_2^\prime) +2\sigma^2S^\prime_1 \\
&=& 2\alpha(S_1-S_2)+2\alpha(S_2-S_1 -2(S_3-S_2))+\frac{4}{\alpha}\sigma^2(S_2-S_1)\\
&=&\frac{4}{\alpha}(\alpha^2(S_2-S_3)+\sigma^2(S_2-S_1)),
\end{eqnarray*}
and we used $\gamma_i^\prime = -({2}/{\alpha})\gamma_i\phi_i=({2}/{\alpha})(\gamma_i^2-\gamma_i)$ and $(\gamma_i^2)^\prime=-({4}/{\alpha}) \gamma^2_i\phi_i=({4}/{\alpha})(\gamma_i^3-\gamma_i^2)$.
\begin{prop}\label{limitUlprime} Suppose Assumption~\ref{assume1} holds, then necessarily
$U_\ell^\prime(\alpha)<0$ for $\alpha^2<\sigma_{\ell+1}^2/(1-\sigma_{\ell+1}^2)$. Hence   $\alpha^2_{\ell}>\sigma^2_{\ell+1}/(1-\sigma^2_{\ell+1})$.
\end{prop}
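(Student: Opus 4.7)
The plan is to establish $U_\ell^\prime(\alpha)<0$ throughout the interval $0<\alpha^2<\sigma_{\ell+1}^2/(1-\sigma_{\ell+1}^2)$; from this the lower bound on $\alpha_\ell^2$ follows immediately, since at any interior minimizer $\alpha_\ell$ of $U_\ell$ the derivative must vanish, which is incompatible with strict negativity throughout the interval.

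My starting point is \eqref{dukalpha} specialized to $k=\ell$, which writes $U_\ell^\prime(\alpha)$ as $(4/\alpha)$ times a sum over $i=1,\ldots,\ell$ of the positive weights $\gamma_i\phi_i$ multiplied by the brackets $s_i^2\phi_i-\sigma^2$. The task therefore reduces to making each bracket negative throughout the prescribed range. The key input is the Decay Rate assumption, $s_i^2=\sigma_i^{2(1+\nu)}$, which together with the normalization $\sigma_1=1$ and $\nu>0$ gives the Picard-type bound $s_i^2\le\sigma_i^2$. Inserting this yields the worst-case estimate $s_i^2\phi_i\le\sigma_i^2\phi_i=\alpha^2\gamma_i$, and a short monotonicity check (differentiate $\alpha^2 u/(u+\alpha^2)$ in $u=\sigma_i^2$) shows the right-hand side is maximized over $i\in\{1,\ldots,\ell\}$ at $i=1$, where it equals $\phi_1(\alpha)=\alpha^2/(1+\alpha^2)$.

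A sufficient condition for every bracket to be strictly negative is therefore $\phi_1(\alpha)<\sigma_{\ell+1}^2$, and an elementary algebraic rearrangement converts this into exactly $\alpha^2<\sigma_{\ell+1}^2/(1-\sigma_{\ell+1}^2)$. Once $U_\ell^\prime(\alpha)<0$ is established on this interval, no stationary point of $U_\ell$---and in particular no minimizer---can lie there, yielding $\alpha_\ell^2>\sigma_{\ell+1}^2/(1-\sigma_{\ell+1}^2)$.

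The principal obstacle I foresee is the identification of the noise scale $\sigma^2$ that appears naturally in the worst-case argument with the reference level $\sigma_{\ell+1}^2$ that appears in the statement. My plan addresses this by appealing to the noise-index calibration of Section~\ref{noiseentering}, specifically the chain $\sigma_{\ell+1}^2<\sigma_{\ell+1}^{1+\nu}<\sigma<\sigma_{\ell}^{1+\nu}<\sigma_\ell$ that situates $\sigma$ between consecutive singular values; once this calibration is accepted the remainder of the argument is a routine monotonicity check. The pessimism introduced by replacing $\sigma$ by $\sigma_{\ell+1}$ is consistent with the role of the resulting bound as a practical choice of $\alpha_{\min}$ for the search interval, rather than a sharp asymptotic characterization of $\alpha_\ell$.
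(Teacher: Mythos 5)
Your proposal is correct and takes essentially the same route as the paper: both arguments reduce to the termwise sufficient condition $\gamma_i\phi_i\left(\alpha^2\gamma_i-\sigma^2\right)<0$ obtained from the Picard-type bound $s_i^2\le\sigma_i^2$, identify the worst case at $i=1$ with $\sigma_1=1$ (giving $\alpha^2<\sigma^2/(1-\sigma^2)$), and then pass from $\sigma$ to $\sigma_{\ell+1}$ via the calibration $\sigma_{\ell+1}^2<\sigma^2<\sigma_\ell^2$ exactly as the paper does. The only cosmetic difference is that you bound the brackets in \eqref{dukalpha} directly, whereas the paper routes through the upper bounding function $\mathcal{U}_\ell^\prime$ of Proposition~\ref{boundonsU}; the resulting inequalities coincide.
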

\begin{proof}
If the upper bound  has a negative slope,  $ \mathcal{U}_\ell^\prime(\alpha)<0$ for some $\alpha$,  then  $U_\ell^\prime(\alpha) <0$ also. Immediately $ \mathcal{U}_\ell^\prime(\alpha)<0$ for $\alpha^2(S_2-S_3)+\sigma^2(S_2-S_1)<0$, and for $U_\ell^\prime(\alpha)<0$ it is sufficient  that
for $1 \le i\le\ell$
\begin{eqnarray*}
0&>& \alpha^2(\gamma_i^2-\gamma_i^3)+\sigma^2(\gamma_i^2-\gamma_i)=\gamma_i( \alpha^2\gamma_i(1-\gamma_i)+\sigma^2(\gamma_i-1))=\gamma_i\phi_i( \alpha^2\gamma_i-\sigma^2),
\end{eqnarray*}
and we need $( \alpha^2\gamma_i-\sigma^2)<0$, or $\alpha^2\sigma_i^2-\sigma^2(\alpha^2+\sigma_i^2)<0$. Now, for $i\le \ell$, $\sigma_i^2\ge \sigma_\ell^2>\sigma^2$ and we obtain
$\alpha^2< \min({\sigma^2\sigma_i^2}/{(\sigma_i^2-\sigma^2)})$ for all $1\le i\le\ell$.  But $x^2/(x^2-a^2)$ is decreasing with $x$ for $x^2>a^2$, hence we need $\alpha^2<\sigma^2/(1-\sigma^2)$. For $\sigma_{\ell+1}^2<\sigma^2<\sigma_\ell^2$  and using $x^2/(1-x^2)$, which is increasing with $x \in (0,1)$, we obtain $\alpha^2<\sigma_{\ell+1}^2/(1-\sigma_{\ell+1}^2)$. Hence we must have  $\alpha^2_{\ell}>\sigma^2_{\ell+1}/(1-\sigma^2_{\ell+1})$.
\end{proof}

We now extend the analysis to obtain a lower bound on $\alpha_k$ for all $k> \ell$.
\begin{theorem}\label{thm:minbnd}
Suppose Assumptions~\ref{assume1} and \ref{assume2} hold, and that $U_k(\alpha_k)$ is a minimum for $U_k(\alpha)$,  then, for $k \ge \ell$, $\alpha_k> \alpha_\ell>\sigma_{\ell+1}/\sqrt{1-\sigma_{\ell+1}^2}=\alphamin$.
\end{theorem}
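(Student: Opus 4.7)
The plan is to leverage the nested inequality \eqref{expectdu} of Proposition~\ref{propukprime} together with the bound obtained in Proposition~\ref{limitUlprime}. The argument has two parts: first pin down the base bound at $k=\ell$, then propagate monotonicity to arbitrary $k>\ell$.

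For the base case, Proposition~\ref{limitUlprime} already shows that any minimum $\alpha_\ell$ of $U_\ell(\alpha)$ must satisfy
\[
\alpha_\ell^{\,2}>\frac{\sigma_{\ell+1}^{\,2}}{1-\sigma_{\ell+1}^{\,2}}=\alphamin^{\,2},
\]
since $U_\ell^\prime(\alpha)<0$ throughout the interval $(0,\alphamin)$. This disposes of the $k=\ell$ inequality.

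For $k>\ell$, I would argue in expectation and use that $\alpha_\ell$ is a stationary point of $U_\ell(\alpha)$, so $U_\ell^\prime(\alpha_\ell)=0$. Applying the first chain of inequalities in \eqref{expectdu} of Proposition~\ref{propukprime} at $\alpha=\alpha_\ell$ gives
\[
E\bigl(U_k^\prime(\alpha_\ell)\bigr) < U_\ell^\prime(\alpha_\ell)=0 \qquad \text{for all } k>\ell.
\]
Thus (in expectation) $U_k$ is strictly decreasing at $\alpha_\ell$, and so its minimizer $\alpha_k$ cannot lie to the left of $\alpha_\ell$; hence $\alpha_k>\alpha_\ell$. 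Combined with the base case this yields $\alpha_k>\alpha_\ell>\alphamin$, as claimed. The same reasoning actually produces a monotone increasing sequence: iterating the argument with $U_k^\prime(\alpha_k)=0$ and $E(U_{k+1}^\prime(\alpha_k))<U_k^\prime(\alpha_k)=0$ shows $\alpha_{k+1}>\alpha_k$, which is consistent with the convergence behavior observed in Section~\ref{curves}.

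The main technical obstacle is that the strict inequality in \eqref{expectdu} is stated in expectation while the minimum $\alpha_k$ in \eqref{argmin} is defined for the realized $U_k(\alpha)$; one must either phrase the whole statement in expectation (as is implicit in the preceding propositions, whose derivations already replace $s_i^2$ by $\sigma^2$ for $i>\ell$) or invoke concentration to justify that the sign of $U_k^\prime(\alpha_\ell)$ agrees with that of its expectation. A secondary subtlety is that Proposition~\ref{propukprime} guarantees $E(U_k^\prime(\alpha_\ell))<U_\ell^\prime(\alpha_\ell)$ but not strict concavity-up behavior near $\alpha_\ell$; however, we need only the sign of the first derivative to conclude $\alpha_k>\alpha_\ell$, so no second-derivative information is required for this theorem.
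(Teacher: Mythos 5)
Your base case is fine and matches the paper: Proposition~\ref{limitUlprime} gives $\alpha_\ell>\alphamin$, and evaluating \eqref{expectdu} at $\alpha_\ell$ to get $E(U_k^\prime(\alpha_\ell))<U_\ell^\prime(\alpha_\ell)=0$ is also exactly where the paper starts. The gap is the next inference: ``$U_k$ is strictly decreasing at $\alpha_\ell$, so its minimizer $\alpha_k$ cannot lie to the left of $\alpha_\ell$.'' That implication requires unimodality or convexity of $U_k$ on the interval, which is nowhere available --- the paper states explicitly that it could not establish convexity, and uniqueness of the minima is only \emph{assumed}, later, in Theorem~\ref{thm:conv}, not in Theorem~\ref{thm:minbnd}. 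Without it, $U_k$ could descend to a local (or even interval-global) minimum at some $\hat\alpha<\alpha_\ell$, rise through a maximum, and be decreasing again as it passes $\alpha_\ell$; a negative slope at $\alpha_\ell$ rules out none of this. Your closing remark that ``no second-derivative information is required'' flags precisely what is missing: ruling out that hump-shaped scenario is where the remaining work lies, and the same unproved unimodality also infects your final claim $\alpha_{k+1}>\alpha_k$, which in the paper is the content of Theorem~\ref{thm:conv} and is proved only under an explicit uniqueness hypothesis.

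The paper closes the gap with a two-stage contradiction that uses the characterization of $\alpha_\ell$ as the \emph{first} minimum of $U_\ell$ to the right of $\alphamin$, not merely its stationarity. First, $\alpha_k\le\alphamin$ is impossible: $U_k^\prime(\alpha_k)=0$ forces $U_\ell^\prime(\alpha_k)>0$ by \eqref{expectdu}, contradicting $U_\ell^\prime<0$ on that range from Proposition~\ref{limitUlprime}; this gives $\alpha_k>\alphamin$ for all $k\ge\ell$ directly, a step your sketch skips for $k>\ell$. Second, supposing a minimum $\alpha_k\in[\alphamin,\alpha_\ell]$ exists, the paper notes $U_k$ is (in expectation) concave up with negative slope at $\alpha_\ell$ --- here \eqref{expectd2up} with $\alpha_\ell>\sigma_{\ell+1}/\sqrt{5}$ does get used --- so between the minimum at $\alpha_k$ and the point $\alpha_\ell$ the derivative must change sign from positive to negative, producing an intermediate maximum $\bar\alpha\in(\alpha_k,\alpha_\ell)$ with $U_k^\prime(\bar\alpha)=0$. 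Applying \eqref{expectdu} a \emph{second} time, now at $\bar\alpha$, yields $U_\ell^\prime(\bar\alpha)>0$, while $U_\ell^\prime<0$ near $\alphamin$; hence $U_\ell$ would have a minimum in $(\alphamin,\bar\alpha)$, contradicting that $\alpha_\ell$ is the first such minimum. This second application of \eqref{expectdu} at the putative maximum is the idea your proposal is missing. Your caveat about expectation versus realized $U_k^\prime$ is fair but is not the defect --- the paper glosses this in the same way, freely replacing $s_i^2$ by $\sigma^2$ for $i>\ell$.
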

\begin{proof}
First suppose the contrary and that $\alpha_k\le  \sigma_{\ell+1}/\sqrt{1-\sigma_{\ell+1}^2}$.  Then $U_k^\prime(\alpha_k)=0$ and by \eqref{expectdu} $U^\prime_\ell(\alpha_k)>0$.  But by Proposition~\ref{limitUlprime}  $U^\prime_\ell(\alpha)<0$ for $\alpha\le \sigma_{\ell+1}/\sqrt{1-\sigma_{\ell+1}^2}$ and we have a contradiction yielding
  $\alpha_k> \sigma_{\ell+1}/\sqrt{1-\sigma_{\ell+1}^2}=\alphamin$, $k\ge \ell$.  It remains  to determine whether it is possible to have $\sigma_{\ell+1}/\sqrt{1-\sigma_{\ell+1}^2}<\alpha_k<\alpha_\ell$ where $\alpha_\ell$ is the first minimum point of $U_\ell(\alpha)$ to the right of $\alphamin$. Again we proceed by contradiction and suppose that $\alpha_k \in [ \alphamin, \alpha_\ell]$ exists. Then we have the following:   \begin{enumerate}
\item
By \eqref{expectdu} $E(U_k^\prime)(\alpha_\ell)<U_\ell^\prime(\alpha_\ell)=0$,  and by \eqref{expectd2up}, noting  $\alpha>\sigma_{\ell+1}/\sqrt{5}$, $E(U^{\prime\prime}_k)(\alpha_\ell)> U^{\prime\prime}_\ell(\alpha_\ell)>0$. Hence $U_k(\alpha)$ is  convex and decreasing at $\alpha_\ell$.
\item At the minimum critical point $\alpha_k<\alpha_\ell$, $U_k^\prime(\alpha_k)=0$. Thus  there must also be a second critical point which is a maximum for some $\bar{\alpha}$ in the interval $\alpha_k<\bar{\alpha}<\alpha_\ell$, for which $U_k^\prime(\bar{\alpha})=0$ and $U_k^{\prime\prime}(\bar{\alpha})<0$.
\item At $\bar{\alpha}$ we then have by \eqref{expectdu} that
$U^\prime_\ell(\bar{\alpha})>0$. Hence  $U_\ell(\alpha)$ is increasing at $\bar{\alpha}<\alpha_\ell$  but is decreasing at $\alphamin<\bar{\alpha}$, i. e. $U_\ell^\prime(\alpha)$ changes sign for some $\alpha $ in the interval $[\alphamin, \bar{\alpha}]$. But by continuity then $U_\ell(\alpha)$ has at least one minimum on this interval.  By assumption, however, $\alpha_\ell$ is the first minimum point of $U_\ell(\alpha)$ to the right of $\alphamin$ and we have arrived at a contradiction.
\end{enumerate}
\end{proof}
We have now obtained a tight lower bound on $\alpha_k$
\begin{equation}\label{tightlowerbound}
\alphamin=\frac{\sigma_{\ell+1}}{\sqrt{1-\sigma^2_{\ell+1}}} < \alpha_k,  \quad \ell \le k \le r.
\end{equation}
It remains to discuss the convergence of $\{\alpha_k\}$ to $\alpha_{\kopt}$ with increasing $k$.
We note that one approach would be to show that the $U_k(\alpha)$ are convex for $\alpha>\sigma_\ell$, but the sign result in \eqref{U2primebounds} only immediately applies for $\alpha>1$, hence investigating the sign requires a more refined bound for each interval $\alpha \in [\sigma_{i}, \sigma_{i-1}]$ for $i\le \ell$. Instead we obtain the following result, which relies on the uniqueness of $\alpha_k$.

\begin{theorem}\label{thm:conv}
Suppose Assumptions~\ref{assume1} and \ref{assume2} hold and that $  \alpha_{\kopt}$ and  each $\alpha_k$, $k>\ell$ are unique  within the given interval $\sigma_{\ell+1}/\sqrt{1-\sigma^2_{\ell+1}}<\alpha<1$.  Then, the sequence $\{\alpha_k\}_{k>\ell}$ is on the average increasing with $\lim_{k\rightarrow r} E(\alpha_k)=E(   \alpha_{\kopt})$ and $\{U_k(\alpha_k)\}$ is increasing.
\end{theorem}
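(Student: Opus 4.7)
My plan starts from the elementary observation that $U_{k+1}$ and $U_k$ differ only in the $(k+1)$-st term of the two sums in \eqref{uprefunc}, so
\begin{equation*}
U_{k+1}(\alpha)-U_k(\alpha)=\phi_{k+1}^2(\alpha)\,s_{k+1}^2+2\sigma^2\gamma_{k+1}(\alpha)\ge 0 \quad \text{for every } \alpha.
\end{equation*}
Equivalently, isolating the $(k+1)$-st term in \eqref{dukalpha} yields
\begin{equation*}
U'_{k+1}(\alpha)-U'_k(\alpha)=\frac{4}{\alpha}\,\phi_{k+1}(\alpha)\gamma_{k+1}(\alpha)\bigl(s_{k+1}^2\phi_{k+1}(\alpha)-\sigma^2\bigr),
\end{equation*}
and, using Assumption~\ref{assume2} together with $\phi_{k+1}-1=-\gamma_{k+1}$, the expected increment equals $-\frac{4\sigma^2}{\alpha}\phi_{k+1}(\alpha)\gamma_{k+1}^2(\alpha)<0$ for every $\alpha>0$ when $k+1>\ell$. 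This is precisely the per-term contribution already driving the nested inequality \eqref{expectdu} in Proposition~\ref{propukprime}.

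The monotonicity of $\{U_k(\alpha_k)\}$ is then immediate, and in fact pathwise: since $U_{k+1}\ge U_k$ pointwise and $\alpha_k$ minimizes $U_k$,
\begin{equation*}
U_{k+1}(\alpha_{k+1})\ge U_k(\alpha_{k+1})\ge U_k(\alpha_k).
\end{equation*}
For monotonicity of $\{\alpha_k\}$ I evaluate the derivative identity at $\alpha=\alpha_k$, where $U'_k(\alpha_k)=0$, to conclude $E[U'_{k+1}(\alpha_k)]<0$. By Theorem~\ref{thm:minbnd} the minimizer $\alpha_{k+1}$ lies in $(\alphamin,1)$, and the hypothesized uniqueness of $\alpha_{k+1}$ in that interval forces $U'_{k+1}$ to change sign exactly once there, from negative to positive at $\alpha_{k+1}$. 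The sign $E[U'_{k+1}(\alpha_k)]<0$ therefore places $\alpha_k$ to the left of $\alpha_{k+1}$ in expectation, which is the claimed on-average inequality $E(\alpha_{k+1})\ge E(\alpha_k)$. The limit $\lim_{k\to r}E(\alpha_k)=E(\alpha^*)$ is then trivial, since $\alpha^*=\alpha_r$ by definition and the telescoping argument terminates exactly there.

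The main conceptual obstacle is the interplay between the randomness of $\alpha_k$ and the expectation operator. The sign information in Proposition~\ref{propukprime} is a deterministic-in-$\alpha$ inequality for $E[U'_k(\alpha)]$, but I am substituting the random quantity $\alpha_k$ into it; this is the same approximate "$\approx$" step already used throughout Section~\ref{sec:theory} when replacing $s_i^2$ by $\sigma^2$ for noise-dominated indices, and it is exactly why the conclusion is stated as on-average increasing rather than almost-sure increasing. The uniqueness hypothesis on $\alpha_{k+1}$ is indispensable: Proposition~\ref{nomax} only rules out extra critical points in $(0,\sigma_k/\sqrt{2})$, and without global uniqueness on $(\alphamin,1)$ the sign of $U'_{k+1}$ at $\alpha_k$ would not suffice to locate $\alpha_{k+1}$ to its right. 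I would, as a consistency check, verify that the argument reduces to Corollary~\ref{cor2} in the base case $k=\ell$.
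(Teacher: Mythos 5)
Your proof is correct at the level of rigor the paper itself operates at, but you reach the monotonicity of $\{\alpha_k\}$ by a genuinely different mechanism. The paper anchors everything at the fixed point $\alpha_\ell$: it combines the nested first-derivative inequalities \eqref{expectdu} with Corollary~\ref{cor2} and the second-derivative ordering \eqref{expectd2up} (which requires $\alpha_\ell>\sigma_{\ell+1}/\sqrt{5}$) to argue that each $E(U_k^\prime)$ is negative at $\alpha_\ell$ with increasingly positive curvature, so the derivative ``increases to $0$ more quickly'' for larger $k$; from this it concludes $E(\alpha_k)>\alpha_\ell$ and convergence from below. You instead compare \emph{consecutive} minimizers: the per-term increment of \eqref{dukalpha} has expectation $-\tfrac{4\sigma^2}{\alpha}\phi_{k+1}\gamma_{k+1}^2<0$ for $k+1>\ell$, so $E[U_{k+1}^\prime(\alpha_k)]<0$ at the stationary point of $U_k$, and the uniqueness hypothesis then places $\alpha_{k+1}$ to the right of $\alpha_k$. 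Your route buys several things: it avoids the curvature machinery and the $\sigma_{\ell+1}/\sqrt{5}$ side condition entirely; it makes the claim $\{U_k(\alpha_k)\}$ increasing fully explicit via the chain $U_{k+1}(\alpha_{k+1})\ge U_k(\alpha_{k+1})\ge U_k(\alpha_k)$, which the paper states but does not spell out; and your substitution of the random point $\alpha_k$ into the expected-increment formula is actually \emph{more} defensible than the paper's analogous step, because $\alpha_k$ is a function of $s_1,\dots,s_k$ only, so conditioning on $\alpha_k$ and using independence of $s_{k+1}$ makes the negative sign hold pathwise in $\alpha_k$ --- a point worth adding explicitly. What the paper's approach buys in exchange is the quantitative picture at $\alpha_\ell$ (the lower bound $E(\alpha_k)>\alpha_\ell$ for all $k$ at once, rather than by induction from the base case you sketch via Corollary~\ref{cor2}). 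One caveat: your assertion that uniqueness of the minimizer forces $U_{k+1}^\prime$ to change sign exactly once on $(\alphamin,1)$ is slightly stronger than a unique \emph{minimum} literally guarantees (a maximum--minimum pair elsewhere need not create a second global minimizer), so you are implicitly reading the hypothesis as uniqueness of the stationary point; the paper leans on the same reading, so this does not separate the two arguments.
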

\begin{proof}
It is immediate from \eqref{uprefunc} that $U_k(\alpha)\ge U_{\ell}(\alpha)$ for any $k>\ell$ and any $\alpha$, and that $U_{k+1}(\alpha)\ge U_k(\alpha)$. Thus the $\{U_k(\alpha)\}$ is an increasing set of functions with $k>\ell$.
By \eqref{expectdu} of Proposition~\ref{propukprime} we also have $E(\frac{\partial U_{k+1}(\alpha)}{\partial \alpha} )< E(\frac{\partial U_k(\alpha)}{\partial \alpha} ) < \frac{\partial U_{\ell}(\alpha)}{\partial \alpha} $, and $\{E(\frac{\partial U_k(\alpha)}{\partial \alpha} ) \}$ is a decreasing set of functions for $k>\ell$. In particular   $E(\frac{\partial U_{k+1}(\alpha_{\ell})}{\partial \alpha} )< E(\frac{\partial U_k(\alpha_{\ell})}{\partial \alpha} )<0$. Moreover, by Corollary~\ref{cor2}  and \eqref{expectd2up} of Proposition~\ref{propukprime}, when $\alpha_{\ell}>{\sigma_{\ell+1}}/{\sqrt{5}}$ the expected second derivatives at $\alpha_{\ell}$ are positive and increasing with $k$ so that the first derivative increases to $0$ more quickly for larger $k$. Thus, not only do we have $E(\alpha_k)>\alpha_{\ell}>\alphamin$ for all $k$, we also have that  $\{E(\alpha_k)\}$ converges from below to  $E(  \alpha_{\kopt})$.
\end{proof}

\begin{corollary}[Faster Decay Rate of the Coefficients]
Suppose that the coefficients $s_i$ decay at the  rate $s_i^2 =\sigma_i^{2(\rho+\nu)}$ for integer $\rho>1$. Then the results of Theorems~\ref{thm:minbnd}-\ref{thm:conv} still hold.
\end{corollary}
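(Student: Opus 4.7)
The plan is to mirror each step in the original proofs of Theorems~\ref{thm:minbnd} and \ref{thm:conv}, replacing the decay exponent $1+\nu$ by $\rho+\nu$ wherever the Picard-type bound \eqref{Picard} enters, and check that the chain of inequalities still runs with the same direction. Since $\sigma_i\le 1$ and $0<\nu<1$, Assumption~\ref{assume1} becomes $\sigma_i^{2(\rho+1)}<s_i^2<\sigma_i^{2\rho}$ for $1\le i\le\ell$, so the $k$-independent envelopes of Proposition~\ref{boundonsU} get replaced by
\[
G_\rho(\alpha)=\sum_{i=1}^{\ell}\sigma_i^{2(\rho+1)}\phi_i^2,\qquad H_\rho(\alpha)=\sum_{i=1}^{\ell}\sigma_i^{2\rho}\phi_i^2,
\]
while $F_k$ is unchanged. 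Because the differences $s_i^2-\sigma_i^{2(\rho+1)}$ and $\sigma_i^{2\rho}-s_i^2$ are both nonnegative and the derivative identities $\phi_i^\prime=(2/\alpha)\phi_i\gamma_i$ and $\gamma_i^\prime=-(2/\alpha)\phi_i\gamma_i$ do not involve $\rho$, the sign arguments for $D_1^\prime,D_2^\prime$, and their second derivatives go through essentially verbatim. A short recomputation gives
\[
D_1^{\prime\prime}(\alpha)=\frac{12}{\alpha^2}\sum_{i=1}^{\ell}\phi_i^2\gamma_i(1-2\phi_i)\bigl(s_i^2-\sigma_i^{2(\rho+1)}\bigr),
\]
with the same $(1-2\phi_i)$ factor driving positivity on $\alpha\le\sigma_\ell$ and the reverse on $\alpha>1$.

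The decisive check is Proposition~\ref{limitUlprime}, because the numerical value $\sigma_{\ell+1}^2/(1-\sigma_{\ell+1}^2)$ is read off directly from $\mathcal{U}_\ell^\prime$. A direct computation gives
\[
\mathcal{U}_\ell^\prime(\alpha)=\frac{4}{\alpha}\sum_{i=1}^{\ell}\phi_i\gamma_i\bigl(\sigma_i^{2\rho}\phi_i-\sigma^2\bigr),
\]
so I need $\sigma_i^{2\rho}\phi_i<\sigma^2$ for every $i\le\ell$, which rearranges to $\alpha^2<\sigma^2\sigma_i^2/(\sigma_i^{2\rho}-\sigma^2)$ whenever $\sigma_i^{2\rho}>\sigma^2$. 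The map $x\mapsto x^2/(x^{2\rho}-\sigma^2)$ has derivative with the sign of $(1-\rho)x^{2\rho}-\sigma^2$, which is strictly negative for every $\rho\ge 1$, so the minimum over $i\le\ell$ is attained at $\sigma_1=1$, producing the intermediate bound $\alpha^2<\sigma^2/(1-\sigma^2)$. Combining this with $\sigma_{\ell+1}^2<\sigma^2$ and the monotonicity of $x^2/(1-x^2)$ exactly as in the original proof then yields $\alpha_\ell^2>\sigma_{\ell+1}^2/(1-\sigma_{\ell+1}^2)$.

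With the $\rho$-versions of Propositions~\ref{boundonsU} and \ref{limitUlprime} in hand, the remainder is automatic: Proposition~\ref{propukprime} and Corollaries~\ref{cor2} and \ref{nomin} depend only on the expectation identities \eqref{expectdu}--\eqref{expectd2un}, which are consequences of Assumption~\ref{assume2} alone and are therefore unaffected. Theorem~\ref{thm:minbnd} then follows by repeating its proof-by-contradiction using the $\rho$-version of Proposition~\ref{limitUlprime}, and Theorem~\ref{thm:conv} by the same monotonicity argument on $\{U_k(\alpha)\}$ and $\{E(U_k^\prime)\}$. The main obstacle I anticipate is confirming that the lower bound in Proposition~\ref{limitUlprime} does not actually tighten or loosen with $\rho$; this hinges on the observation that $x^2/(x^{2\rho}-\sigma^2)$ is monotone decreasing on $(\sigma^{1/\rho},1]$ for every $\rho\ge 1$, so that the normalization $\sigma_1=1$ forces the same intermediate value $\sigma^2/(1-\sigma^2)$ regardless of $\rho$.
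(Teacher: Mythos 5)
Your proposal is correct and follows essentially the same route as the paper's own proof: the paper absorbs the faster decay into a scale factor $K_i=\sigma_i^{2(\rho-1)}$, giving $G_\rho(\alpha)=\alpha^4\sum_{i=1}^{\ell}K_i\gamma_i^2$ and $H_\rho(\alpha)=\alpha^2\sum_{i=1}^{\ell}K_i\phi_i\gamma_i$ --- exactly your envelopes, since $\alpha^2\gamma_i=\sigma_i^2\phi_i$ --- and then reruns Proposition~\ref{limitUlprime} via the condition $\gamma_i\phi_i(\alpha^2K_i\gamma_i-\sigma^2)<0$, identical to your $\sigma_i^{2\rho}\phi_i<\sigma^2$, recovering the unchanged bound $\alpha_\ell^2>\sigma_{\ell+1}^2/(1-\sigma_{\ell+1}^2)$, after which Theorems~\ref{thm:minbnd}--\ref{thm:conv} follow since the expectation results rest only on Assumption~3.2. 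The sole difference is that you explicitly verify the monotonicity of $x\mapsto x^2/(x^{2\rho}-\sigma^2)$ where the paper merely asserts the argument continues as before; that is a welcome filled-in detail, not a departure.
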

\begin{proof}
This holds by modifying the inequality \eqref{Picard} for the faster decay rate yielding
\begin{eqnarray*}
K_i\sigma_i^4<\sigma_i^{2(\rho+\nu)}=s_i^2 <\sigma_i^2K_i, \quad K_i=\sigma_i^{2(\rho-1)}.
\end{eqnarray*}
Thus the coefficients are bounded as in \eqref{boundsi2} but with scale factor $K_i$
\begin{eqnarray*}
\alpha^4 \gamma_i^2K_i=\sigma_i^4 \phi_i^2K_i &<&\phi_i^2(\alpha) s_i^2<K_i \sigma_i^2 \phi_i^2 = K_i\alpha^2 \phi_i \gamma_i.
 \end{eqnarray*}
Using this relation all the results presented in Proposition~\ref{boundonsU} still hold with $H(\alpha)$ and $G(\alpha)$ replaced by
\begin{equation*}G_{\rho}(\alpha) = \alpha^4 \sum_{i=1}^\ell K_i \gamma^2_i, \quad \text{and}\quad H_{\rho}(\alpha) = \alpha^2 \sum_{i=1}^\ell K_i \phi_i\gamma_i.
\end{equation*}
Then again redefining the summations $S_p$ to now depend on the coefficients with $K_i$, for $H_\rho$ and $G_\rho$, following Proposition~\ref{limitUlprime}  yields the condition
$$\gamma_i\phi_i(\alpha^2 K_i \gamma_i -\sigma^2)<0$$ for $U^\prime_\ell(\alpha)<0$. Continuing the argument as in the proof of Proposition~\ref{limitUlprime} still yields the lower bound $\alpha_\ell^2 > \sigma_{\ell+1}^2/(1-\sigma_{\ell+1}^2)$. But this is all that is required for Theorems~\ref{thm:minbnd}-\ref{thm:conv} and hence the results follow without modification.
\end{proof}

\begin{remark}
This result shows that given a TSVD which sufficiently incorporates the dominant terms of the SVD expansion, including sufficient terms that are noise-contaminated, $\alpha_k$ will be an increasingly good approximation for $  \alpha_{\kopt}$. Moreover, including additional terms in the expansion will have limited impact on the solution, because $  \alpha_{\kopt}>\alpha_{\ell}$ and  filter factor $\gamma_i(  \alpha_{\kopt})$ is decreasing with $i$. In particular,  we are using  $\gamma_i(   \alpha_{\kopt})< \gamma_i(\alpha_{\ell})<\gamma_{\ell+1}(\alpha_{\ell})<\gamma_{\ell+1}(\sigma_{\ell+1})=1/2$, for $i>\ell+1$ and $\alpha_{\ell}>\sigma_{\ell+1}$. These nested inequalities follow immediately because $\gamma(x,y)=y^2(y^2+x^2)^{-1}$ is decreasing as a function of $x$ and increasing as a function of $y$. 
\end{remark}
\begin{remark}
Although the main result of this paper effectively relies on an assumption that the UPRE functions have unique minima within the obtained bounds, $\alphamin<\alpha_k<1$, proving that the minima are indeed unique seems to require using the discrete summations occurring in $U_k(\alpha)$  as approximations to continuous integrals. This approach is very technical, not very general, being dependent on the decay rate parameter $\tau$, and serves only to tighten the lower bound for $\alpha$. We therefore chose not to present results along this direction, relying on the computational results that are supportive of the unique identification of a minimum within these realistic bounds.
\end{remark}
\begin{remark}
The results given depend on the assumption that summations with $s_i^2$ for terms with $i>\ell$ may be approximated in terms of the noise variance. For $r-\ell$  small relative to $r$, this assumption breaks down. As $r-\ell$ increases the assumptions become more reliable and less impacted by outlier data for $s_i^2$. Still the main convergence theorem holds only with respect to this analysis and we cannot expect that $\{\alpha_k\}$ will always converge monotonically to $  \alpha_{\kopt}$ in practice. With sufficient safeguarding, as noted in the algorithm presented in Section~\ref{sec:simulations}, it is reasonable to expect that $  \alpha_{\kopt}$ is quickly and accurately identified.
\end{remark}

\begin{remark}[Posterior Covariance]
We have shown  $\{\alpha_k\}$ increases with $k$. Consequently, the approximate \textit{a posteriori} covariance of the filtered TSVD solution  $\sigma^2/(4\alpha^2)$ decreases with $k$, to $\sigma^2/(4   \alpha_{\kopt}^2)$. In trading-off the minimization of the risk by using the UPRE to find the optimal $\alpha$, the method naturally finds a solution which has increasing smoothness with increasing $k$. This limits the impact of the possibly non-smooth components of the solution corresponding to small singular values, most likely noise-contaminated,  that would contaminate the unfiltered TSVD solution.
\end{remark}

\section{Practical Application}\label{sec:simulations}

The convergence theory  for $\{\alpha_k\} \rightarrow  \alpha_{\kopt}$ as $k \rightarrow \kopt$ presented in Section~\ref{sec:theory} motivates the construction of an algorithm to automatically determine the optimal index $\kopt$, defined as in Section~\ref{sec:intro}  to be the optimal number of terms to use from the  TSVD,  and associated regularization parameter $ \alpha_{\kopt}$. The  algorithm is presented and discussed in Section~\ref{sec:alg} and tested for $2D$ test problems using IR Tools \cite{GaHaNa:IR} in Section~\ref{sec:algresults}. These results also corroborate the convergence theory presented in Section~\ref{sec:theory}.

\subsection{Algorithm}\label{sec:alg}
We propose an algorithm that works by iteratively minimizing \eqref{uprefunc} on the TSVD subspace of size $k \le r$ until a set of convergence criteria are met.  These convergence criteria rest on the observation that $\emph{in general}$ for sufficiently large $k$, the relative change, $c_k =|(\alpha_k-\alpha_{k+1})|/\alpha_k>0$,  between successive parameter estimates, $\alpha_{k}$ and $\alpha_{k+1}$, decreases as $k$ increases towards $r$. If during the iterative procedure there exists a $k$ such that it is reasonably believed that $\alpha_k \approx \alpha_i$ for all $i > k$, the algorithm terminates, producing $\kopt$ and $\alpha_{\kopt}$. A pseudo-code implementation is given as Algorithm~\ref{influx}.

\begin{algorithm}[!ht]
  \SetAlgoLined
  \DontPrintSemicolon
  \LinesNumbered
  \KwIn{\
      SVD or TSVD;
      data $\mb b$ and noise variance estimate $\sigma^2$;
      initial index $k_0$;
      maximum $k$, $k_\text{max}$;
      step size $\Delta_k$;
      relative tolerance $\delta$;
      window length $w$;
      optional estimate for $\ell$
      }
  \KwOut {\
      Converged parameter $\alpha_{\kopt}$;
      convergence index $\kopt$;
      relative mean change $\hat c_{iw}$;
      }

  $k \longleftarrow k_0; \quad \hat c_{iw} \longleftarrow \text{inf}$\;
  Initialize $\alphamin$ according to \eqref{tightlowerbound} using $\ell$ if provided, otherwise using $k$ \;
  $\alpha(0) \longleftarrow \argmin{\alpha} U_{k}(\alpha)$ over interval $[\alphamin,1]$ \;
  \While{ $(\hat c_{iw} > \delta$ \bf{and} $k < k_\text{max})$ \bf{or} $(\alpha(i) = \alphamin)$}{
    $i \longleftarrow i + 1; \quad k \longleftarrow k + \Delta_k $\;
    If $\ell$ not provided, update $\alphamin$ according to \eqref{tightlowerbound} using $k$ \;
    $\alpha(i) \longleftarrow \argmin{\alpha} U_{k}(\alpha)$ over interval $[\alphamin,1]$ \; 
    $c(i) = (| \alpha(i) - \alpha(i-1) |) / \alpha(i)$\;
    \If{$i \ge w$}{
    $\hat c_{iw} \longleftarrow \mean(c(i), c(i-1), \dots, c(i - w + 1))$\;
    }
  }

  \Return{$k=\kopt$, $\alpha(i)=\alpha_{\kopt}$, $\hat c_{iw}$}
  \caption{Truncated UPRE Parameter Estimation\label{influx}}
\end{algorithm}

Algorithm~\ref{influx} takes as input a full or truncated SVD as well as a number of required and optional parameters which we now discuss. For large scale problems it is not necessary, and is even discouraged, to compute $\alpha_k$ for all $k \le \kopt$. For moderately or mildly ill-posed problems, and for problems with high signal to noise ratios in which the expected $\kopt$ is likely to be large relative to the problem size, it is recommended to start the algorithm at some $k_0 \ne 1$ and to increment $k$ by some $\Delta_k \ne 1$, yielding the sequence $\{k(i): k_0, k_0 + \Delta_k, k_0 + 2\Delta_k, \dots k_0 + i\Delta_k\}$. The algorithm computes the sequence $\{ \alpha_{k_0}, \alpha_{k_0 + \Delta_k}, \alpha_{k_0 + 2\Delta_k}, \alpha_{k_0 + 3\Delta_k}, \dots \} $, each solving \eqref{argmin} for the given index,  until either $k_0 + i \Delta_k \ge k_\text{max}$ or until $\alpha_k$ has converged, where $k_0$, $\Delta_k$, and $k_\text{max}$ are provided by the user.  For each $k_0 + i\Delta_k$ the relative change in $\alpha$ is computed as $c_i = | \alpha_{k_0 + i\Delta_k} - \alpha_{k_0 + (i-1) \Delta_k} | / \alpha_{k_0 + i\Delta_k}$.
Noting again that $c_i$ is only $\emph{in general}$ decreasing for sufficiently large $i$, it is unwise to determine stopping criteria by directly thresholding on $c_i < \delta$, for some user provided tolerance $\delta$.  It is observed that higher confidence in convergence can be achieved by requiring  $\hat{c}_{iw}<\delta$ where $\hat c_{iw}$ is the mean of multiple $c_i$'s calculated over the window of size $w$, i.e. over $\{c_i, c_{i+1}, \dots, c_{i+w} \}$.  This protects against the possibility of stopping the parameter search too early and prior to the stabilization of $\alpha_k$. This occurs when $c_i < \delta$, while at the same time $c_j \ge \delta$ for some $j > i$. Due to the impact of noise on calculating the parameter $\alpha_k$, if $k$ is not yet sufficiently large so that $\alpha_k$ has not stabilized then the relative changes between successive estimates of $\alpha_k$ may be either extremely small or large. Comparing multiple values of $c_i$ in the form of $\hat c_{iw}$ to $\delta$ enables a broader view of the convergence of $\alpha_k$, and the moving window average smooths out variation in $c_i$. 
\begin{remark}[Parameter $\Delta_k$]
The choice of $\Delta_k$ is influenced by the size of the problem and if known, an estimate for the expected number of terms to be used in the TSVD solution. While choosing $\Delta_k$ large has computational advantages due to a larger step size in the search for $\kopt$, with $\Delta_k$ too large one risks the possibility of Algorithm~\ref{influx} producing a value of $\kopt$ larger than necessary. Solutions with $\kopt$ larger than necessary more closely resemble the full UPRE regularized solution. For the problem sizes considered here $\Delta_k \in \{5, 10, 25\}$ all seemed to work well. 
\end{remark}
\begin{remark}[Parameter $w$]
The choice of $w$ has a similar effect as $\Delta_k$. Choosing $w$ large will delay the termination criteria . Parameters $\Delta_k$ and $w$ interact in the sense that they together determine the set $\{c_i, c_{i+1}, \dots, c_{i + w} \}$ whose mean is compared to $\delta$ in determining convergence. The choice of $w$ determines how many values are being averaged, while $w$ and $\Delta_k$ determine the minimum and maximum $k$ of the moving window over which $\alpha_k$ is tested for convergence. Choosing $w \in \{5, 10, 25, 50\}$ worked well for the problems considered here.
\end{remark}
\begin{remark}[Parameter $\delta$]
Algorithm~\ref{influx} is sensitive to $\delta$ and we recommend choosing $\delta \in [1\mathrm{e}{-5}, 1\mathrm{e}{-3}]$. In our experiments $\delta > 1\mathrm{e}{-3}$ terminated the algorithm prior to convergence resulting in over smoothed solutions due to an underestimate of $\kopt$, while $\delta < 1\mathrm{e}{-5}$ produced $\kopt$ far greater than necessary. 
\end{remark}

To summarize, the required input to the proposed algorithm is a full or  truncated SVD, a starting index $k_0$, a step size between successive estimates $\Delta_k$, an upper-bound $k_\text{max}$ dependent on the severity of the problem and the noise level, a tolerance $\delta$, and a width $w$ over which the moving average of relative changes in successive estimates of $\alpha$ is computed.

The results of Theorem~\ref{thm:minbnd} are incorporated into Algorithm~\ref{influx} with the inclusion of an optional parameter $\ell$ specifying an estimate for the index at which noise dominates the coefficients. If a Picard plot is available $\ell$ can be estimated visually, otherwise an approach relying on Picard parameter estimates similar to that used by \cite{Taroudaki} and \cite{Levin} can be used. If an estimate for $\ell$ is available, $\alphamin$ is calculated according to \eqref{tightlowerbound}, and $\alpha_k$ is found using $\alphamin = \sigma_{\ell+1} / \sqrt{1- \sigma_{\ell+1}^2}$ and $\alphamax=1$ in \eqref{argmin}. Otherwise,  the bound  $\sigma_{k+1} / \sqrt{1- \sigma_{k+1}^2}$ is used in \eqref{argmin}. In either case if the lower bound is achieved then the theory indicates that noise has not yet dominated  and the algorithm is allowed to continue. Thus, in the case where $k < k_\text{max}$, necessary conditions for the termination of Algorithm~\ref{influx} are $\hat c_{iw} < \delta$ and $\alpha_k$ should be greater than the specified $ \alphamin$.

\subsection{Verification of the Algorithm and Theory}\label{sec:algresults}
We now present the evaluation of Algorithm~\ref{influx} on a 2D test problem using the IR Tools package described in \cite{GaHaNa:IR}. We report the results applying a Gaussian blur to test problem \texttt{Satellite} of size $256\times256$ using \texttt{PRblur}, with  medium blur.  We considered noise levels of $5\%$, $10\%$, and $25\%$, with $100$ noise instances generated for each noise level. The IR Tools function \texttt{PRnoise} was used to generate noise, where the noise level is defined as as $\| \mb \eta \|_2 / \| \mb b \|$. A moving window of size $w=5$  in computing $\hat c_{iw}$ with relative tolerance of $\delta = 1\mathrm{e}{-3}$ was found to work well for each noise level, but may need to be adapted to the severity of the ill-posedness of the problem. Recorded in each run are the converged $\alpha_{\kopt}$, the size of the TSVD subspace $\kopt$ to be used, and the relative reconstruction error (RRE). RRE is defined as $ \| x_\text{true} - x_{\kopt} \|_2 / \| x_\text{true} \|_2 $ where $x_{\kopt}$ is the filtered, $\kopt$-truncated TSVD solution obtained by using $\alpha_{\kopt}$ as the  regularization parameter.

\begin{figure}[!htb]
    \begin{center}
{\includegraphics[width=.33\textwidth]{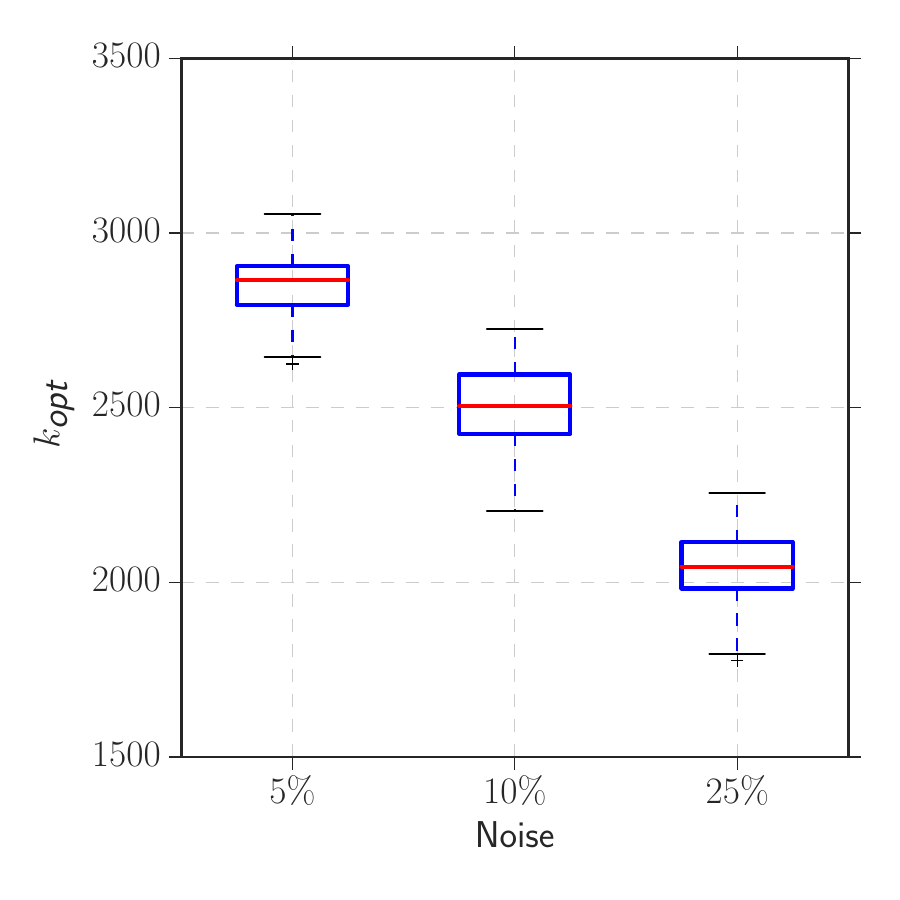}}
    \end{center}
    \caption{Box plots showing the index $\kopt$ produced by Algorithm~\ref{influx} for problem \texttt{Satellite} computed from $100$ runs for  noise levels $5\%$, $10\%$, and $25\%$. The number of terms $k$ in the TSVD that provide useful information decreases as the noise level increases.\label{fig:k_boxplot}}
\end{figure}

\begin{figure}[!htb]
  \begin{center}
    \subfloat[Noise level $= 5\%$ ]{\includegraphics[width=.33\textwidth]{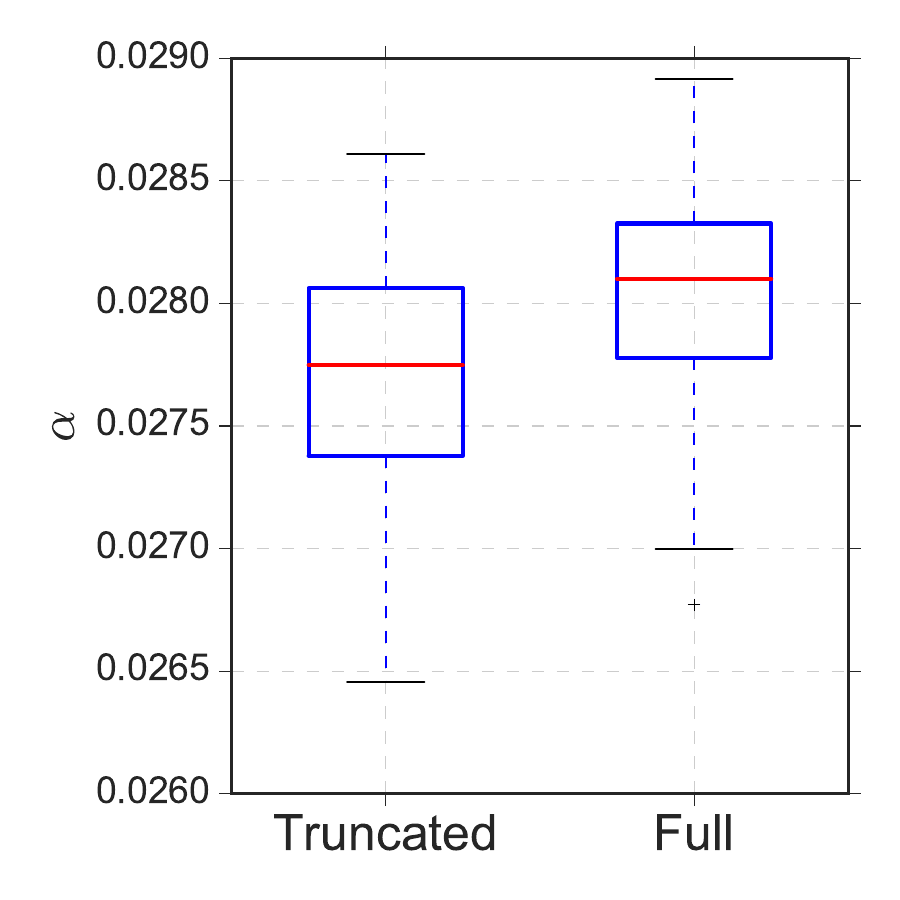}}
    \subfloat[Noise level $= 10\%$ ]{\includegraphics[width=.33\textwidth]{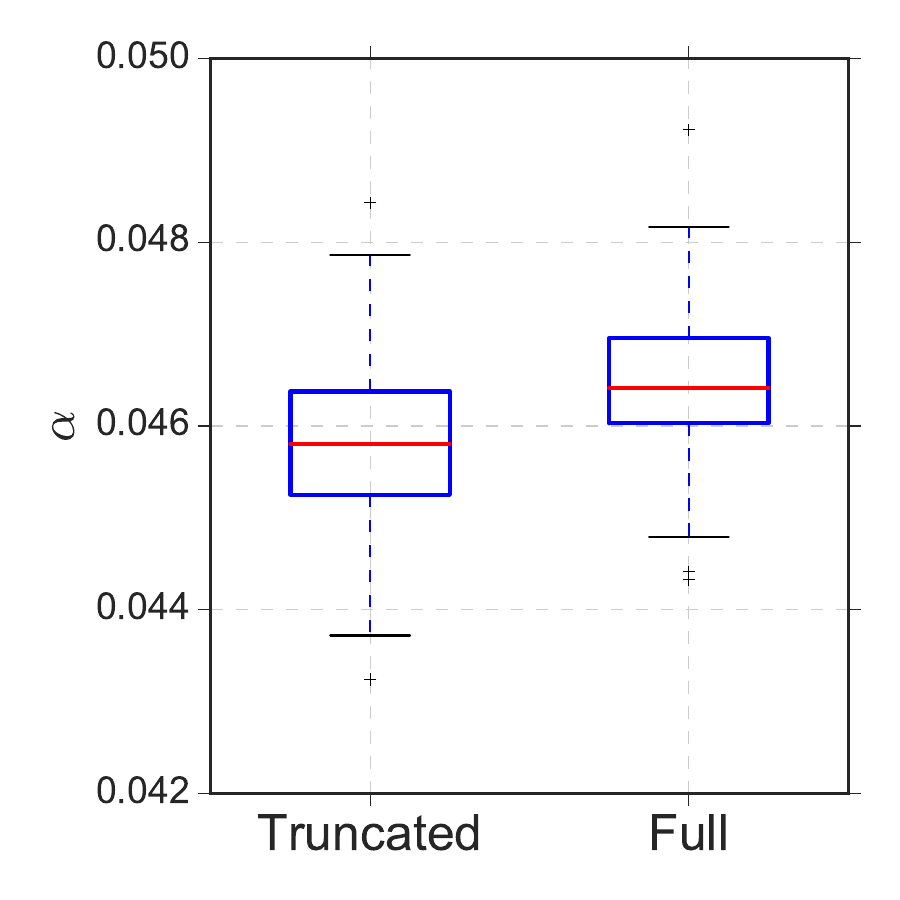}}
    \subfloat[Noise level $= 25\%$]{\includegraphics[width=.33\textwidth]{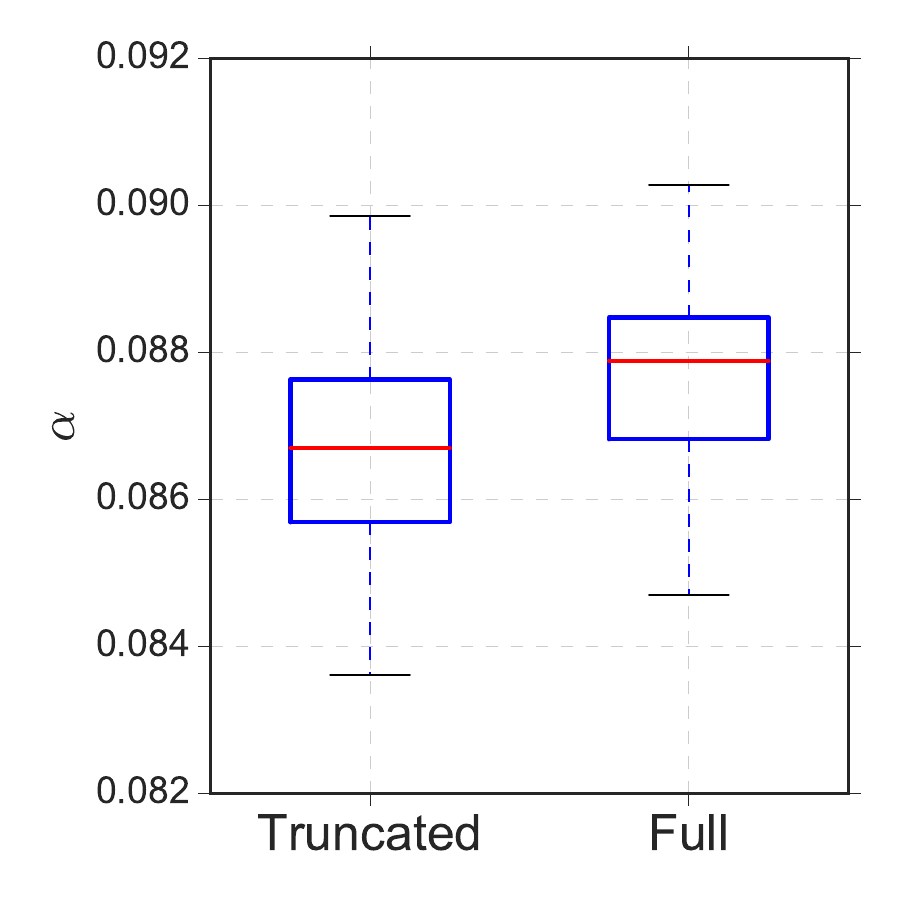}}
  \end{center}
  \caption{Box plots comparing parameter estimates $\alpha_{\kopt}$ with $ \alpha_r$ for problem \texttt{Satellite} computed from $100$ runs for noise levels $5\%$, $10\%$, and $25\%$. For each noise level, the estimate $\alpha_{\kopt}$ produced by Algorithm~\ref{influx} is generally less than $\alpha_r$, demonstrating that by including more terms in the TSVD, $k > \kopt$, greater regularization is required.  Note that the limits on the $y-$axes vary across subplots to better visualize the parameter distributions across noise levels.\label{fig:parameter_boxplots}}
\end{figure}

\begin{figure}[!htb]
  \begin{center}
    \subfloat[Noise level $= 5\%$ ]{\includegraphics[width=.33\textwidth]{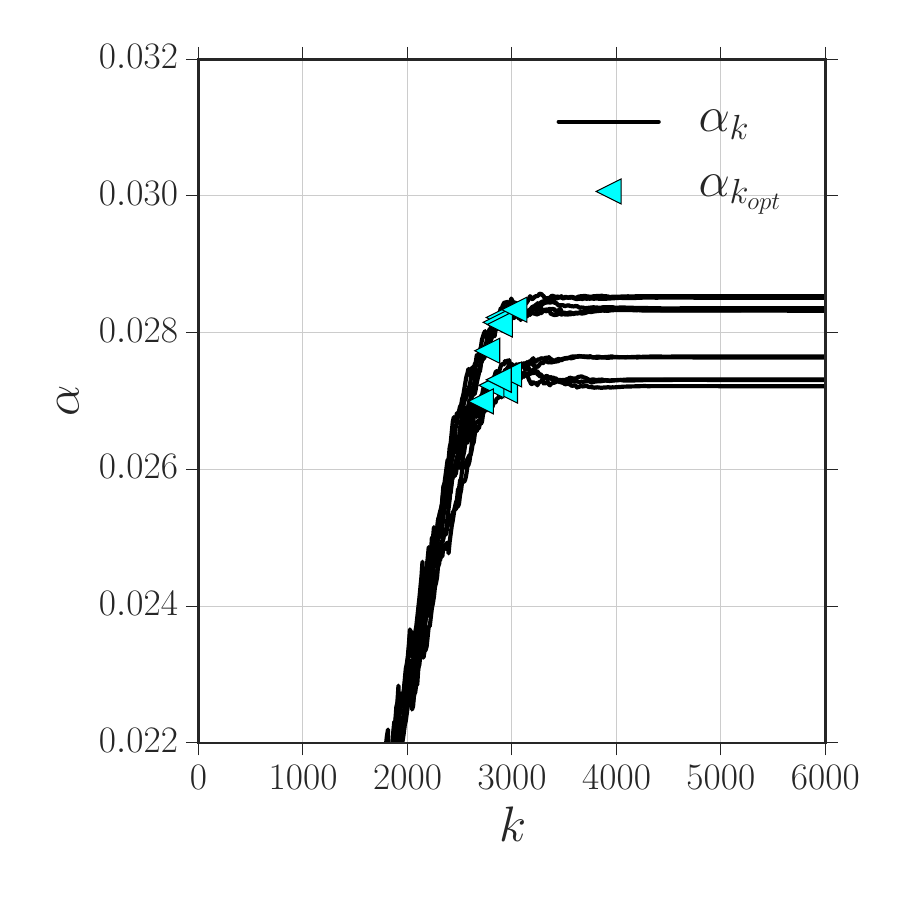}}
    \subfloat[Noise level $= 10\%$ ]{\includegraphics[width=.33\textwidth]{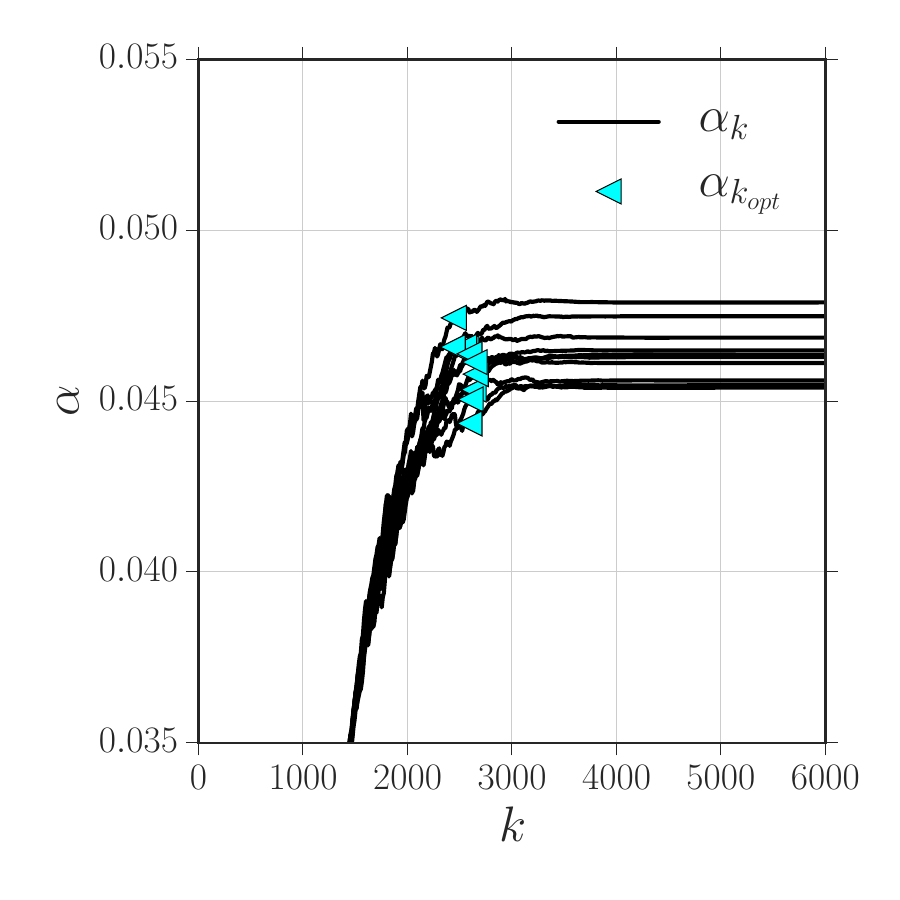}}
    \subfloat[Noise level $= 25\%$ ]{\includegraphics[width=.33\textwidth]{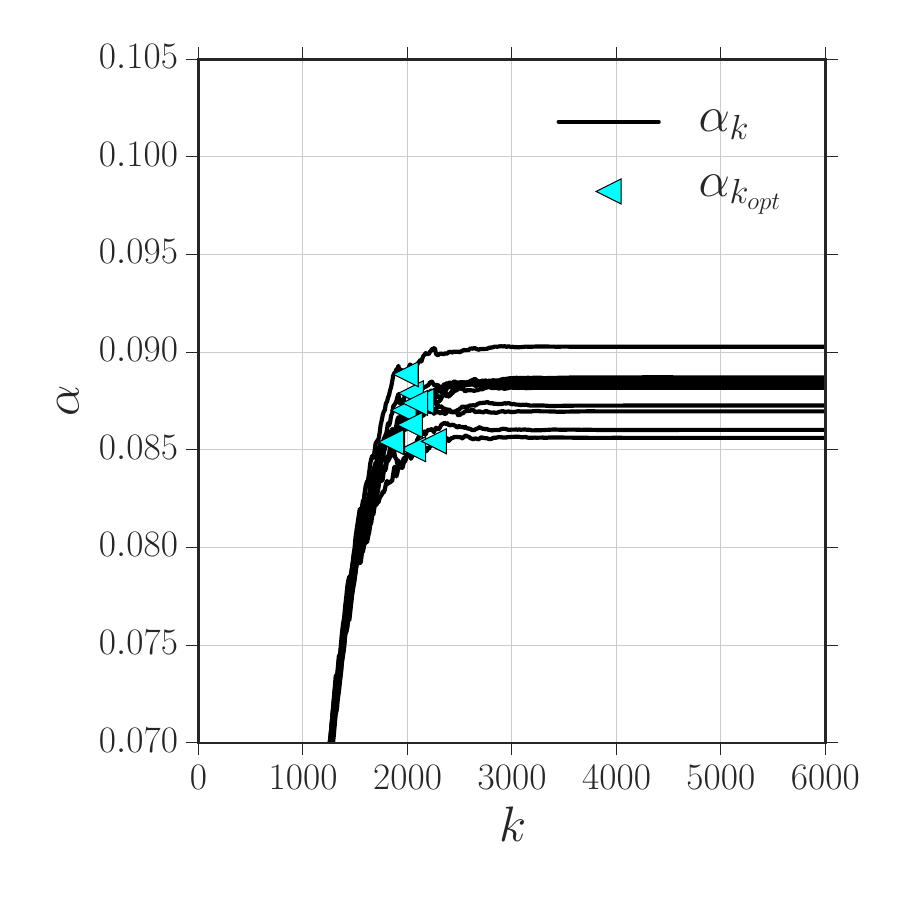}}   
  \end{center}
  \caption{Line plots showing the calculated estimates for $\{\alpha_k\}$ with increasing number of terms $k$ in the TSVD. The results are given for problem \texttt{Satellite} for noise levels $5\%$, $10\%$, and $25\%$, for $10$ random noise instances at the specified noise level. The resulting point $(\kopt, \alpha_{\kopt})$ produced by Algorithm~\ref{influx} is displayed as a cyan triangle. Note that the limits on the $y-$axes vary across subplots to better visualize the convergence across noise levels. \label{fig:parameter_lines}}
\end{figure}

Figure ~\ref{fig:k_boxplot} is a box plot\footnote{A box plot is a visual representation of summary statistics for a given sample. Horizontal lines of each plotted box represent the $75\%$, $50\%$ (median), and $25\%$ quantiles, with outliers plotted as individual crosses or points.} showing the spread of $\kopt$ values for the $100$ noise instances run for each noise level, where in each case $\kopt \ll r = 65536$. Figure~\ref{fig:parameter_boxplots} is a box plot comparing the $\alpha_{\kopt}$ returned by the algorithm, and $ \alpha_r$ obtained by minimizing the UPRE on the full space. These figures together reaffirm that the optimal regularization parameter found by UPRE is largely determined by a relatively small number of terms in the TSVD, and less impacted by the tail of the coefficients dominated by noise. The estimation of $\{ \alpha_k \}$ with increasing number of terms in the TSVD is depicted in Figure~\ref{fig:parameter_lines} for the first $10$ runs of each noise level, where the point of convergence  $(\kopt, \alpha_{\kopt})$ is represented as a cyan triangle. It should be noted that  the estimated lower bound $\alphamin$ was not used, and $\alpha$ was minimized over the interval $(0,1)$ using \texttt{fminbnd} (\texttt{fminbound} is used for the Python implementation). A tolerance of $\delta = 1\mathrm{e}{-3}$ was found to produce a value for $\alpha_{\kopt}$ just prior to the point where  $\{ \alpha_k \}$ began to stabilize. A smaller $\delta$ will necessarily increase $\kopt$, but with negligible changes in $\alpha_{\kopt}$. In these simulations averaged over all $100$ runs, $\alpha_{\kopt}$ was within $1.22\%$, $1.47\%$, and $1.17\%$ of $ \alpha_r$ for noise levels $5\%$, $10\%$, and $25\%$ respectively using fewer than $5\%$ of the SVD components.

In terms of RRE, the solution obtained using the truncated  UPRE  and a subspace of size $\kopt$ with  parameter $\alpha_{\kopt}$ generated by Algorithm~\ref{influx} generally provided a better solution than obtained using the full UPRE for each noise level. Figures~\ref{fig:rre_boxplots} and ~\ref{fig:rre_histograms} show box plots and histograms respectively of the RRE comparing the regularized TSVD and the full UPRE solution.  Over all noise levels, the median and mean reconstruction error of $100$ noise instances is lower in the regularized TSVD solution. Similar to the Picard parameter approaches of \cite{Taroudaki}, Algorithm~\ref{influx}  identifies an index $\kopt$ for which coefficients $s_k$ are dominated by noise for $k > \kopt$. Our approach, however, does not rely on performing statistical tests on the coefficients, but instead examines the stabilization of $\alpha_k$ as $k$ increases. Once $\alpha_k$ has stabilized, adding additional noise dominated terms in the solution delivers no benefit.  Furthermore, if a TSVD with $\kmax$ terms has been calculated, then either $\alpha_k$ converges for $k<\kmax$ or we know that the optimal choice $\kopt$ is greater than $\kmax$, and that $\hat{c}_{iw}$ provides some estimate for whether $\kopt>>\kmax$ or whether the given TSVD can be assumed to be sufficient in providing a good estimate for the solution $\mb x$.

\begin{figure}[!htb]
  \begin{center}
    \subfloat[Noise level $= 5\%$ ]{\includegraphics[width=.33\textwidth]{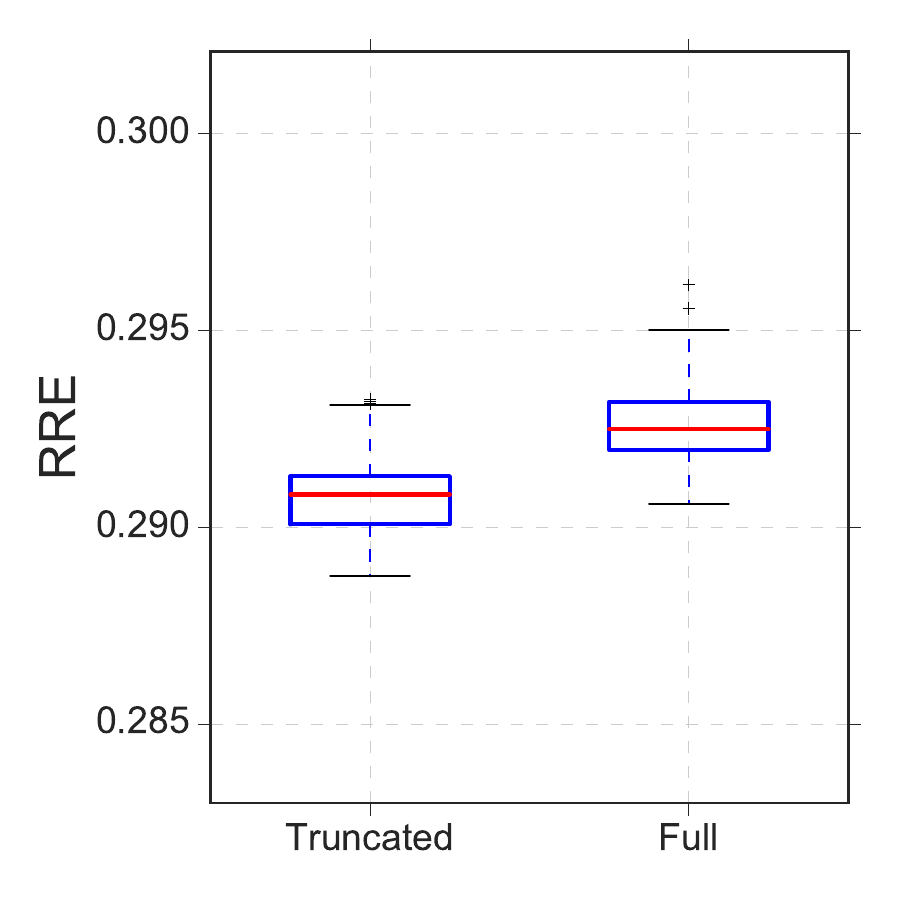}}
    \subfloat[Noise level $= 10\% $]{\includegraphics[width=.33\textwidth]{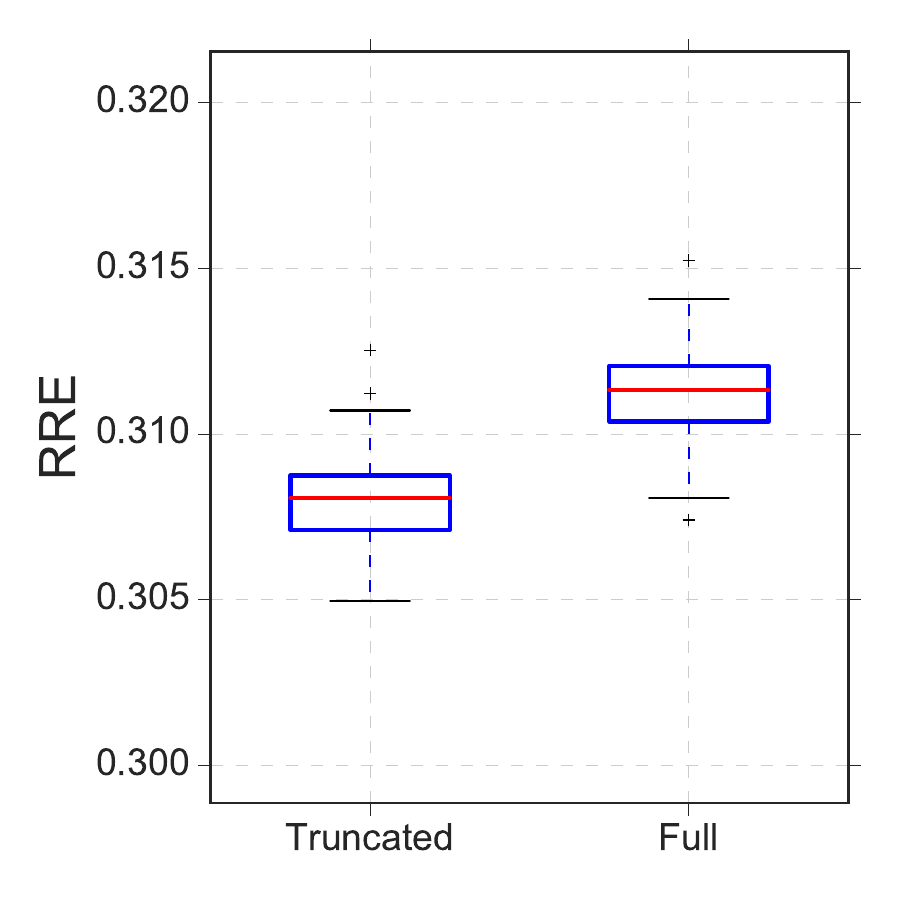}}
    \subfloat[Noise level $= 25\%$]{\includegraphics[width=.33\textwidth]{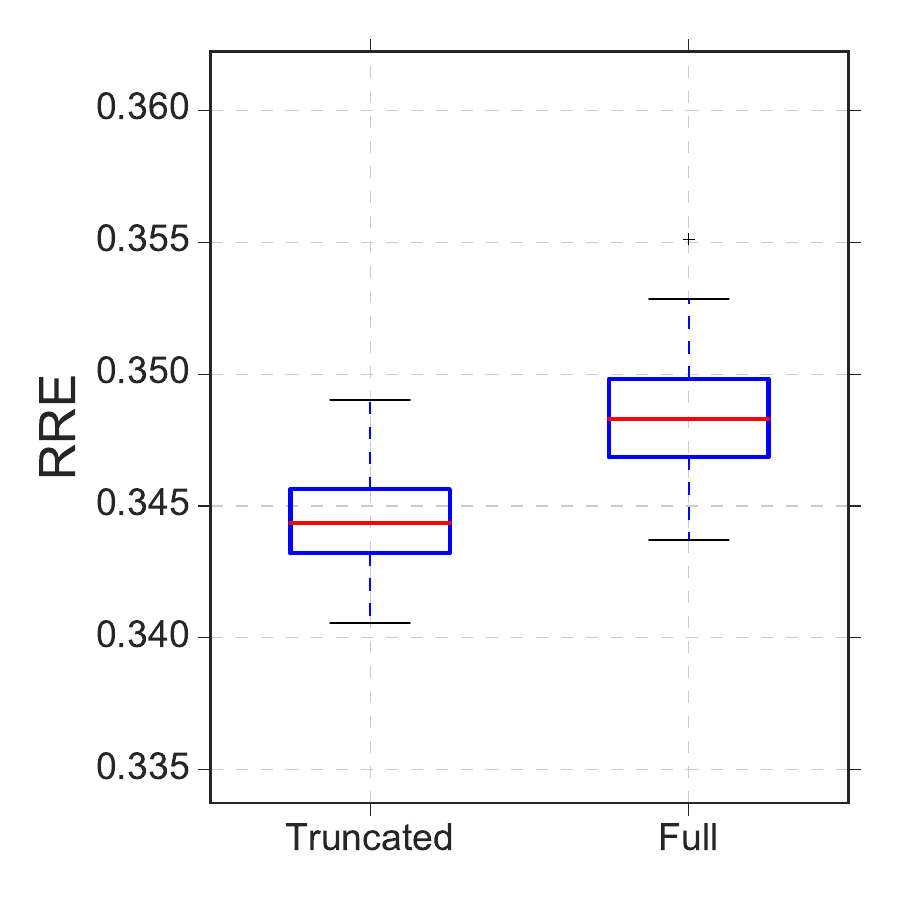}}
  \end{center}
  \caption{Box plots of RRE comparing solutions using truncated UPRE with parameter $\alpha_{\kopt}$ and solutions using full UPRE with parameter $ \alpha_r$ for problem \texttt{Satellite} computed from $100$ runs for noise levels $5\%$, $10\%$, and $25\%$. Regularization parameter $\alpha_{kopt}$ obtained by UPRE on a TSVD generally has lower error, as evident from Truncated UPRE plots being vertically shifted downwards relative to full UPRE boxplots. Note that the limits on the $y-$axes vary across subplots to better visualize the spread of the distributions across noise levels. \label{fig:rre_boxplots}}
\end{figure}

\begin{figure}[!htb]
  \begin{center}
    \subfloat[Noise level $= 5\%$ ]{\includegraphics[width=.33\textwidth]{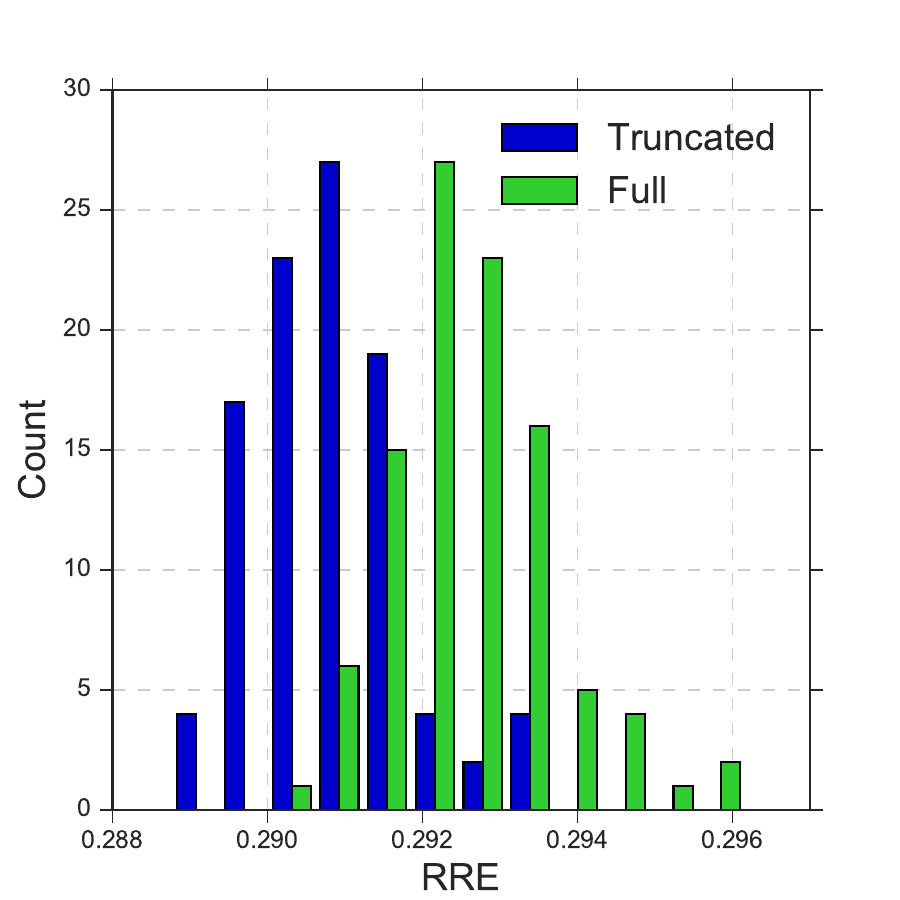}}
    \subfloat[Noise level $= 10\%$ ]{\includegraphics[width=.33\textwidth]{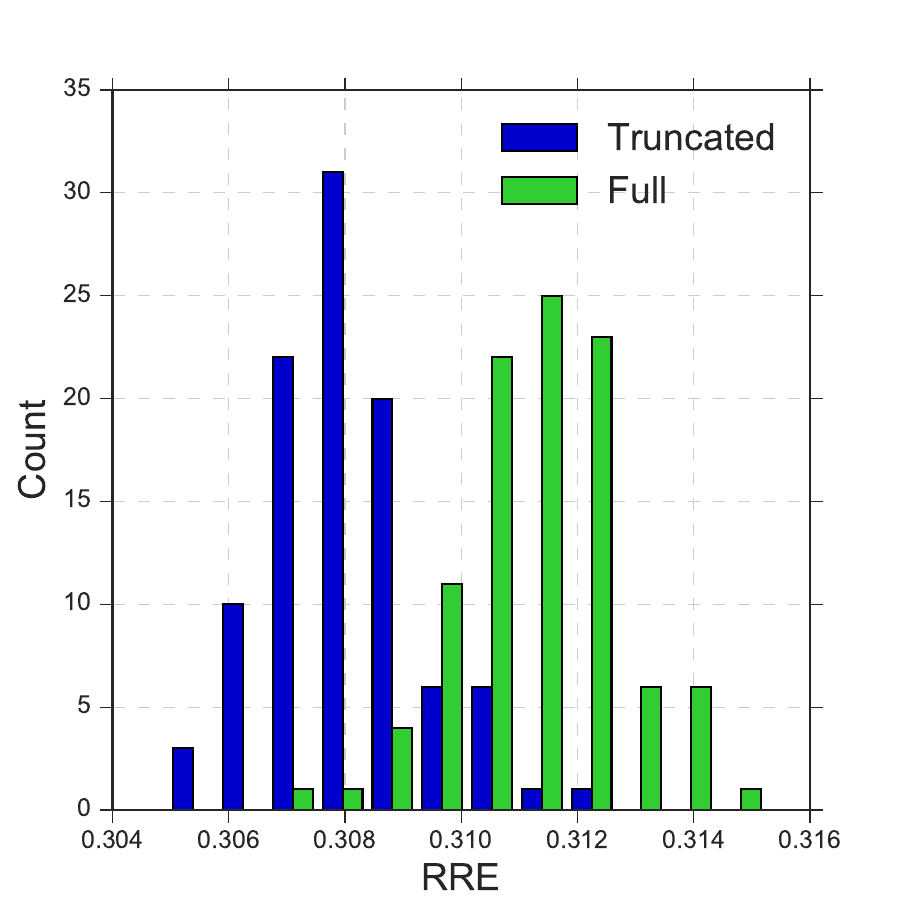}}
    \subfloat[Noise level $= 25\%$ ]{\includegraphics[width=.33\textwidth]{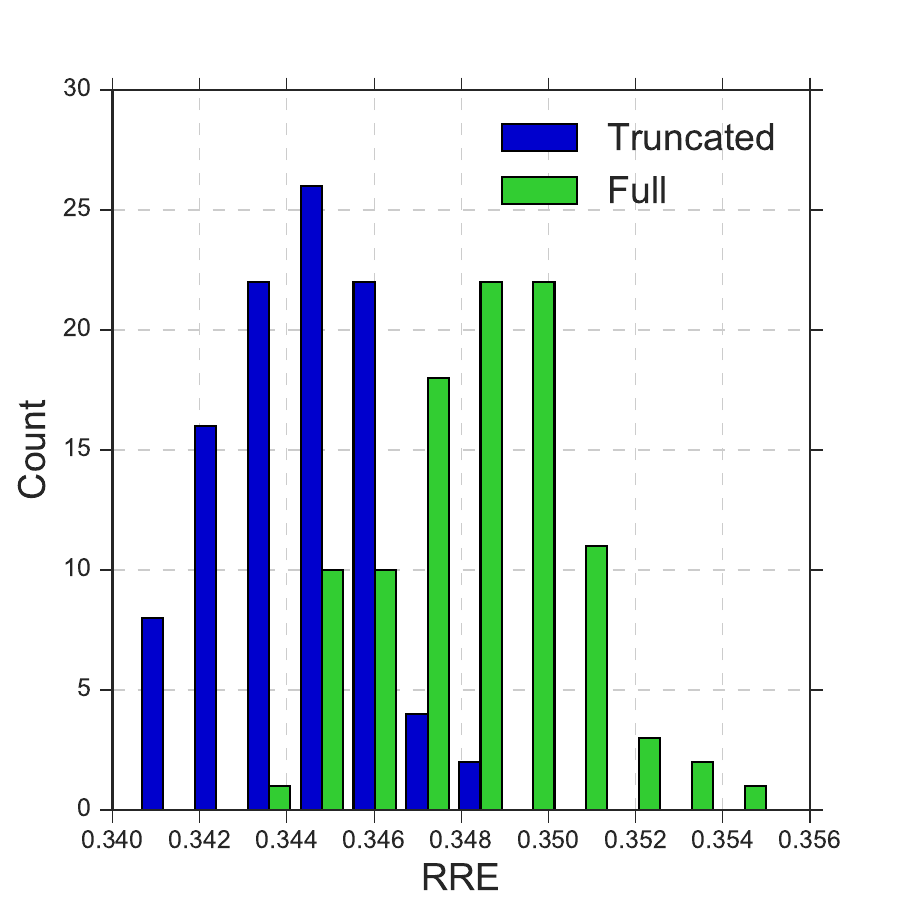}}
  \end{center}
  \caption{Histograms of RRE comparing solutions using $\alpha_{\kopt}$ and solutions using $ \alpha_r$ for problem \texttt{Satellite} computed from $100$ runs for each noise level $5\%$, $10\%$, and $25\%$. Regularization parameter $\alpha_{\kopt}$ obtained by UPRE on a TSVD generally has lower error, as evident from the truncated histograms having peaks shifted to the left relative to the full UPRE.\label{fig:rre_histograms}}
\end{figure}

In these simulations  $\ell$ is not known precisely but was estimated by visual inspection of the Picard coefficients, as well as by comparing the distributions of the noise contaminated and noise free coefficients. This approach for estimating $\ell$ is not possible in general as the noise free coefficients are unknown in practice, but this method of estimating $\ell$ was employed for the purpose of validating the results of Theorem~\ref{thm:minbnd}. An estimate for the lower bound $\alphamin$ obtained from \eqref{tightlowerbound} is depicted as the red dashed curve in Figure~\ref{fig:lower_bound}, with $\{\alpha_k\}$ the solid black line. It can be seen that $\alphamin$ serves as a tight lower bound for the converged parameter $\alpha_{\kopt}$, and  the lower bound   $\sigma_{k+1} / \sqrt{1 - \sigma_{k+1}^2}$ can be used effectively in cases where an estimate of $\ell$ is not available.

In addition to test image \texttt{Satellite} with a medium Gaussian blur applied, we also applied Algorithm~\ref{influx} with the same parameters to test image \texttt{HST} with both mild and severe Gaussian blurring. The results, summarized in Figures~\ref{fig:k_boxplot_hst_mild} 
- \ref{fig:additional_cases_2} are consistent with the results for test case \texttt{Satellite}.

In summary, given a TSVD or SVD, an optional estimate of $\ell$, and suitable parameters determined by the ill-posedness of the problem, Algorithm~\ref{influx} is able to effectively determine a regularization parameter $\alpha_{\kopt}$ obtained by UPRE minimization over the TSVD subspace of size $\kopt$, such that the regularized truncated solution $\bf x$ has consistently lower RRE than the full UPRE solution.

\begin{figure}[!htb]
  \begin{center}
    \subfloat[Noise $=5\%$]{\includegraphics[width=.33\textwidth]{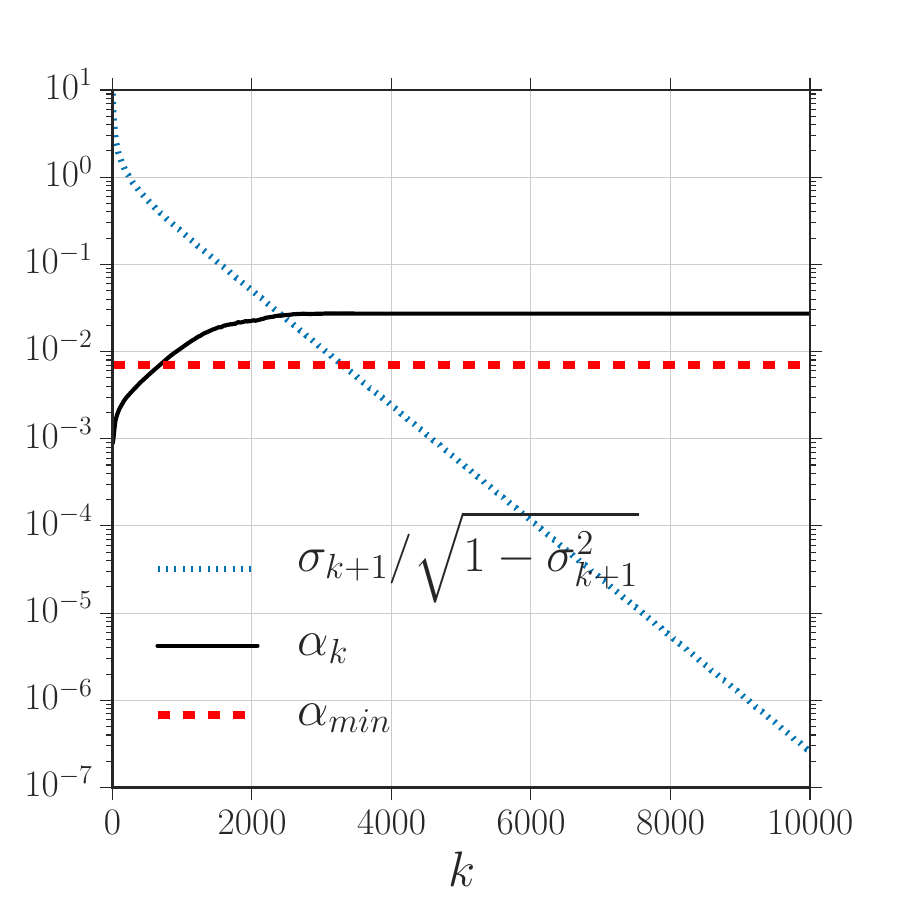}}
    \subfloat[Noise $=10\%$]{\includegraphics[width=.33\textwidth]{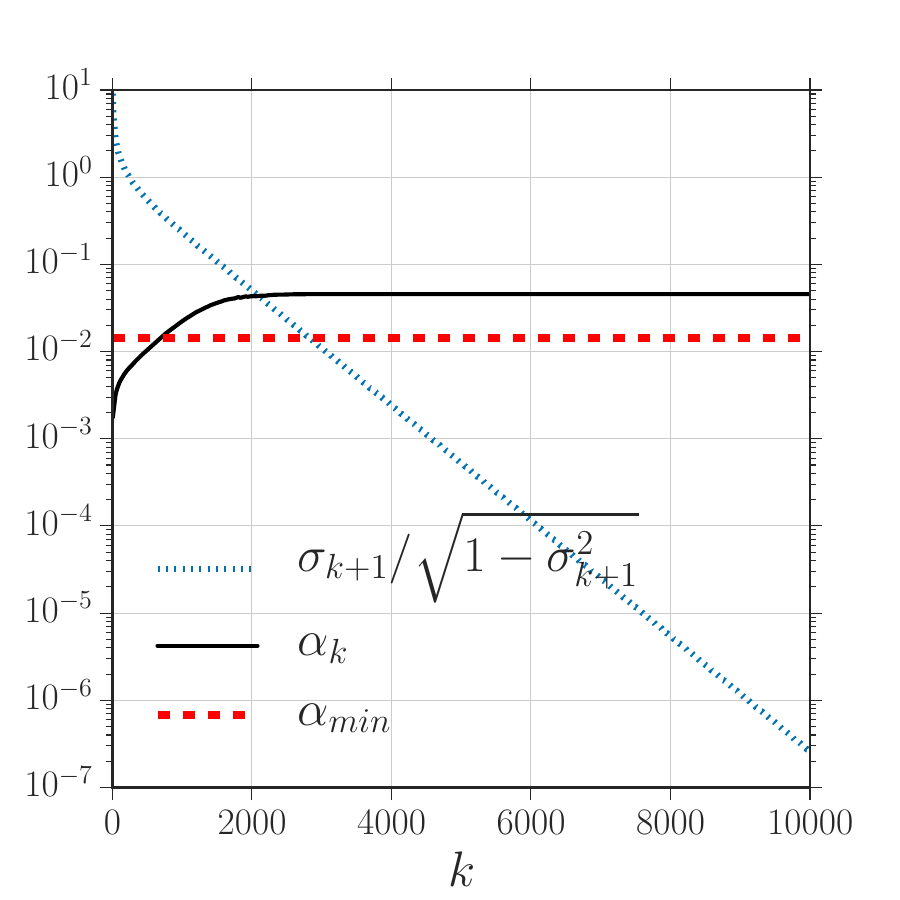}}
    \subfloat[Noise $=25\%$]{\includegraphics[width=.33\textwidth]{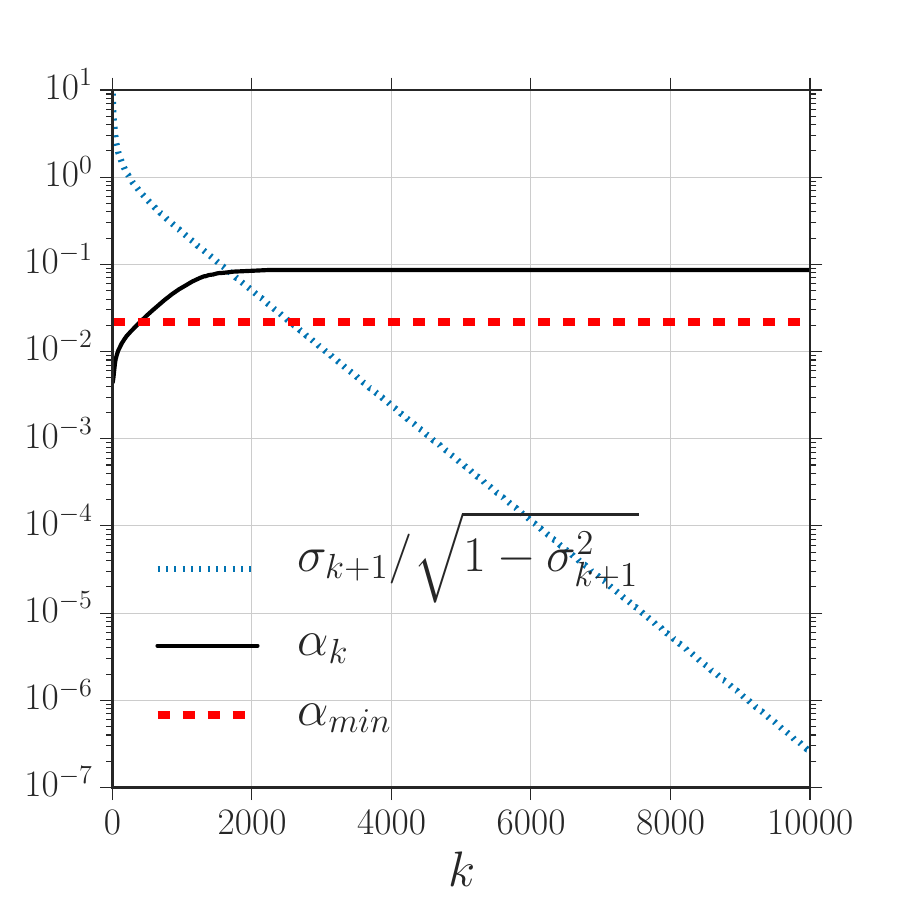}}
  \end{center}
   \caption{Line plots showing the convergence of $\{\alpha_k \}$ for problem \texttt{Satellite} for noise levels $5\%$, $10\%$, and $25\%$. In each subplot, $ \alpha_r$ is plotted as  a solid  black line  for $10$ random noise instances at the specified noise level. The dotted blue curve represents the lower bound in \eqref{tightlowerbound} as a function of $k$, with the red dashed line representing the lower bound according to Theorem~\ref{thm:minbnd} and dependent on $\ell$ for a single run. \label{fig:lower_bound}}
\end{figure}

\begin{figure}[!htb]
    \begin{center}
    	\subfloat[\texttt{HST} severe blurring]{\includegraphics[width=.33\textwidth]{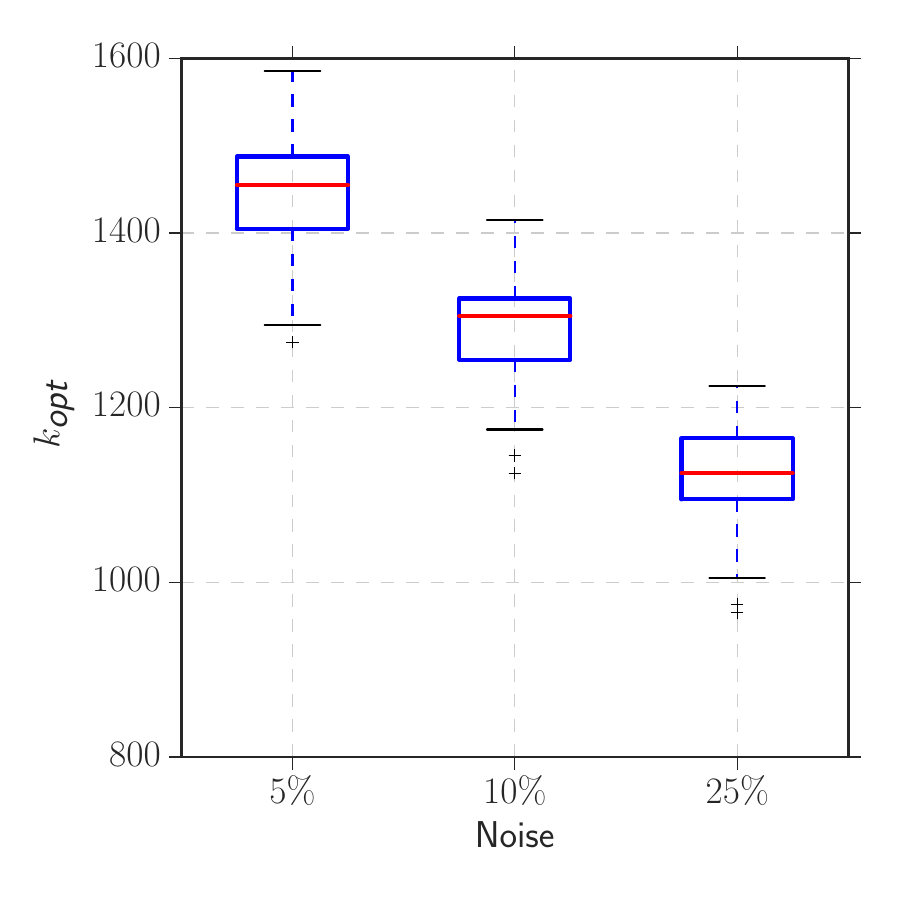}}
	    \subfloat[\texttt{HST} mild blurring ]{\includegraphics[width=.33\textwidth]{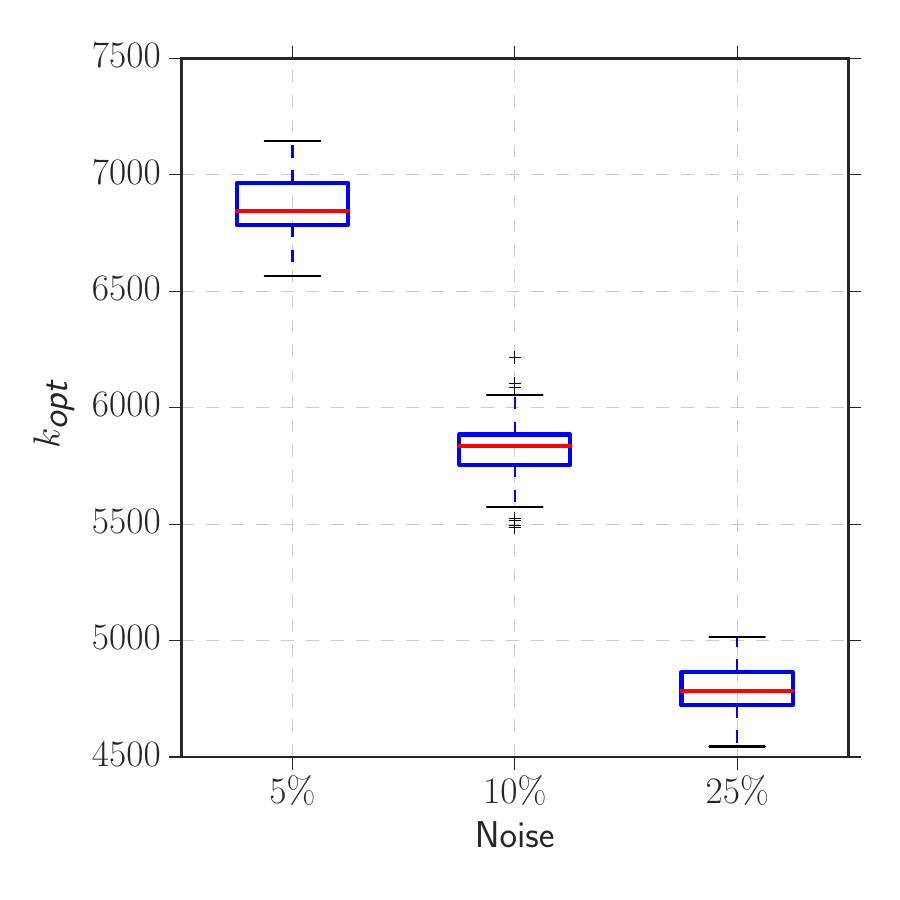}}
    \end{center}
    \caption{Box plots showing the index $\kopt$ produced by Algorithm~\ref{influx} for problem \texttt{HST} computed from $100$ runs for  noise levels $5\%$, $10\%$, and $25\%$. The number of terms $k$ in the TSVD that provide useful information decreases as the noise level increases.\label{fig:k_boxplot_hst_mild}}
\end{figure}

\begin{figure}[!htb]
  \begin{center}
    \subfloat[Noise level $= 5\%$ ]{\includegraphics[width=.33\textwidth]{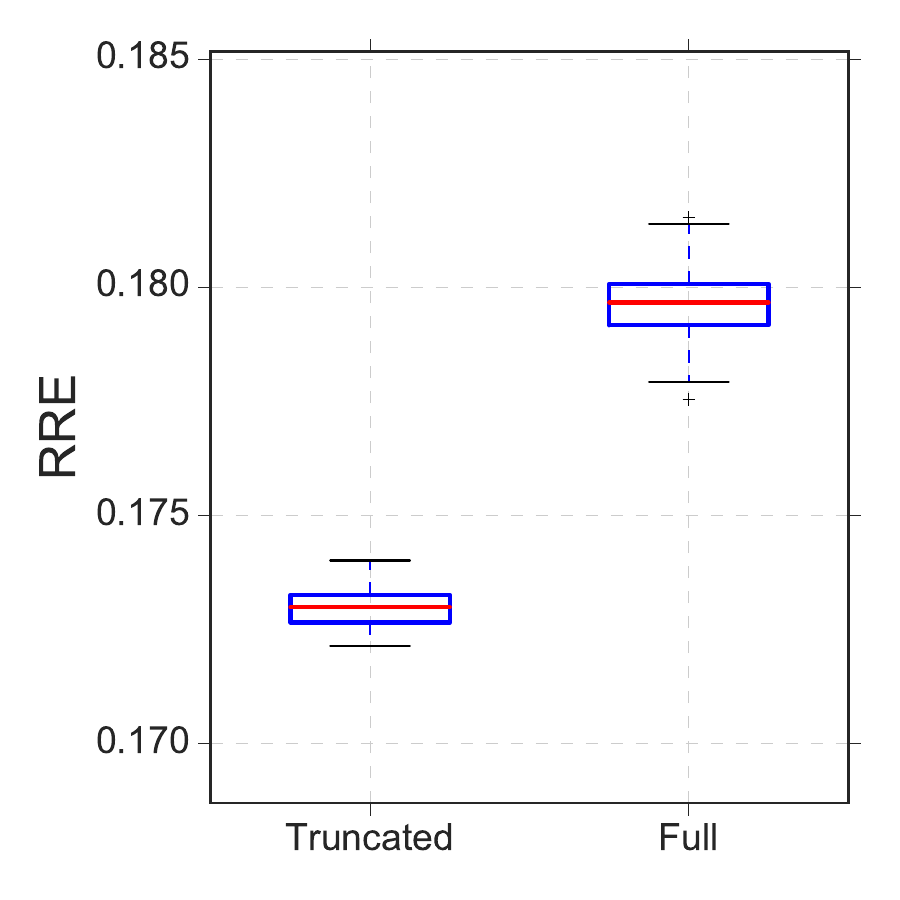}}
    \subfloat[Noise level $= 10\%$ ]{\includegraphics[width=.33\textwidth]{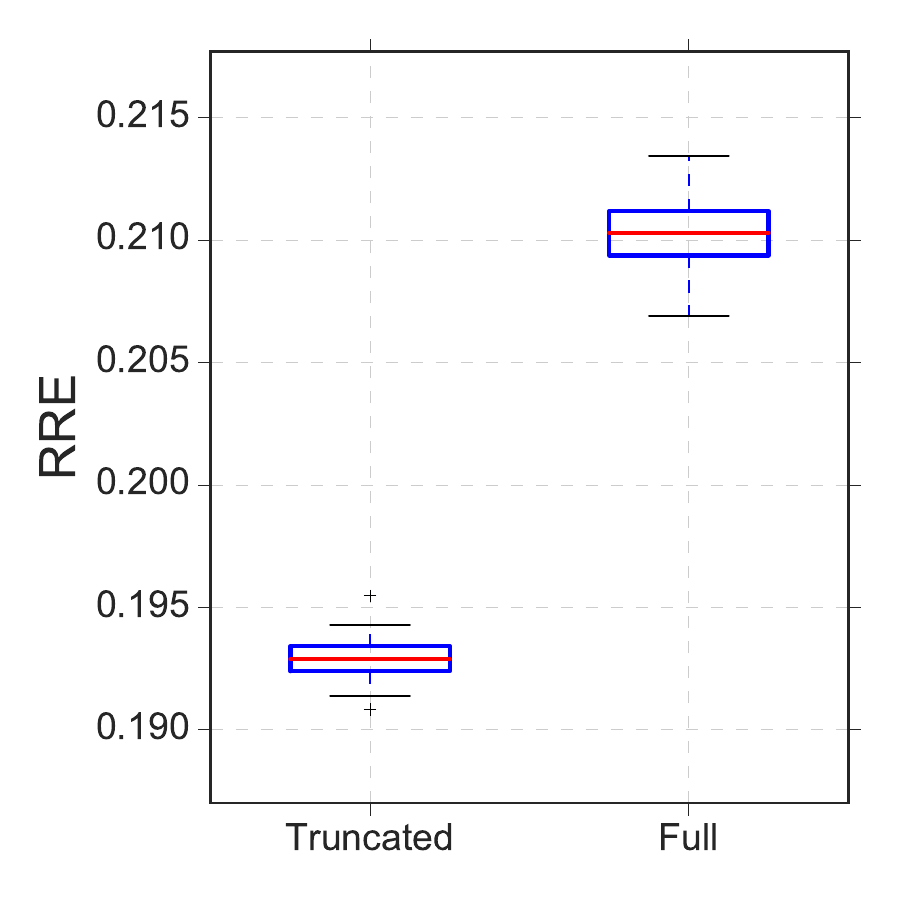}}
    \subfloat[Noise level $= 25\%$ ]{\includegraphics[width=.33\textwidth]{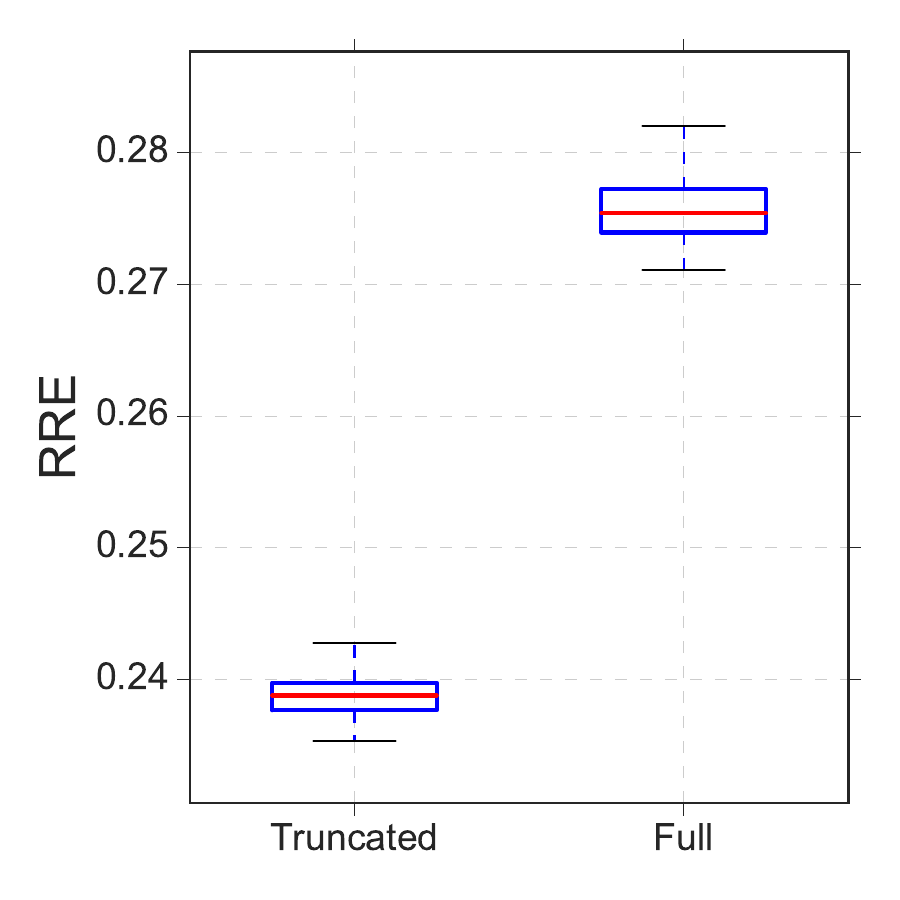}}
  \end{center}
  \caption{Box plots of RRE comparing solutions using truncated UPRE with parameter $\alpha_{\kopt}$ and solutions using full UPRE with parameter $ \alpha_r$ for problem \texttt{HST} with mild blur computed from $100$ runs for noise levels $5\%$, $10\%$, and $25\%$. Regularization parameter $\alpha_{\kopt}$ obtained by UPRE on a TSVD has consistent lower error, as evident from Truncated UPRE plots being vertically shifted downwards relative to full UPRE boxplots. Note that the limits on the $y-$axes vary across subplots to better visualize the spread of the distributions across noise levels.}
\end{figure}

\begin{figure}[!htb]
  \begin{center}
    \subfloat[Noise level $= 5\%$ ]{\includegraphics[width=.33\textwidth]{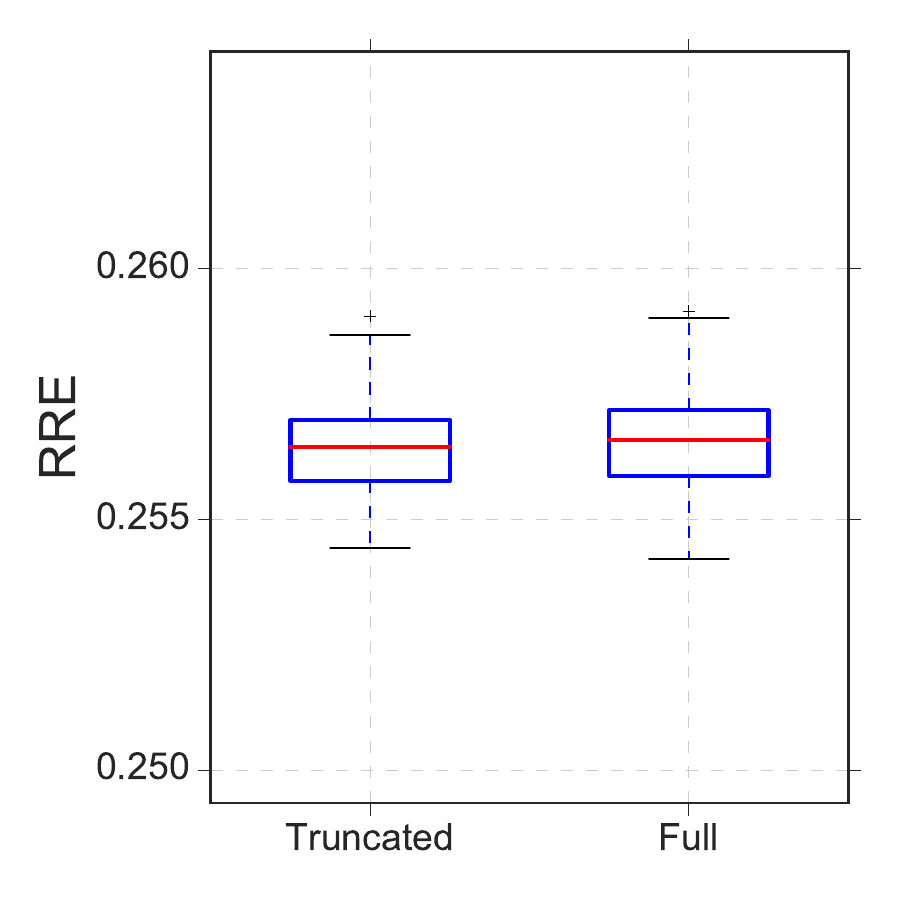}}
    \subfloat[Noise level $= 10\%$ ]{\includegraphics[width=.33\textwidth]{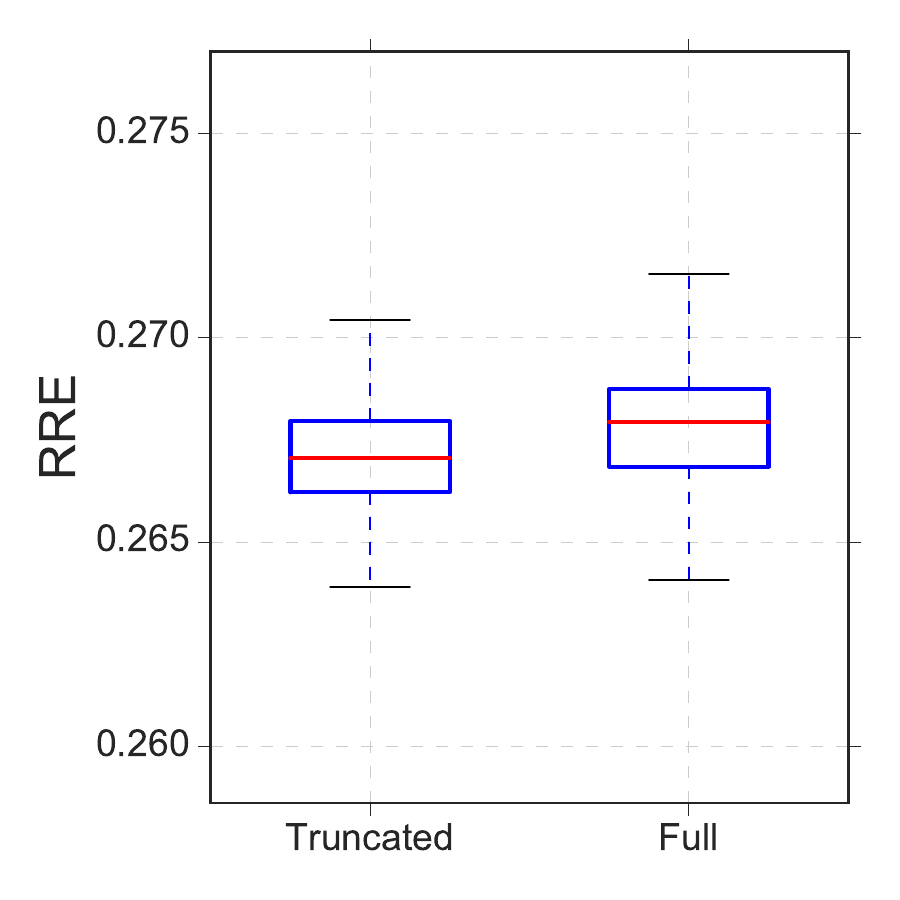}}
    \subfloat[Noise level $= 25\%$ ]{\includegraphics[width=.33\textwidth]{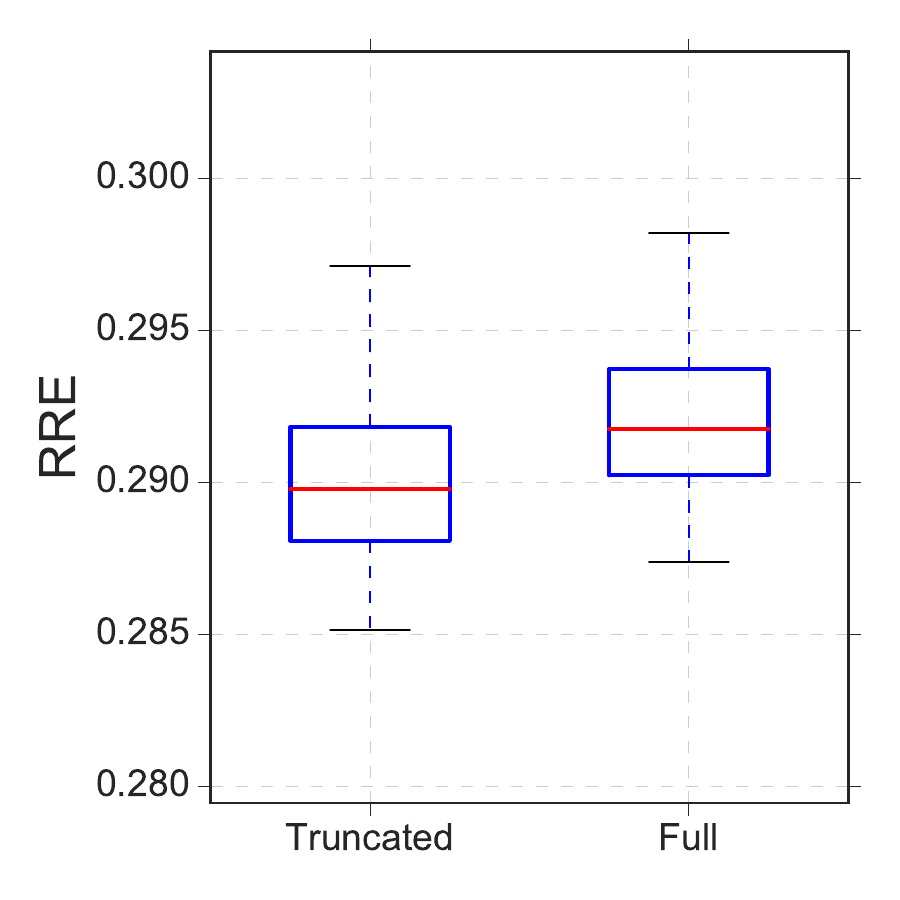}}
  \end{center}
  \caption{Box plots of RRE comparing solutions using truncated UPRE with parameter $\alpha_{\kopt}$ and solutions using full UPRE with parameter $ \alpha_r$ for problem \texttt{HST} with severe blur computed from $100$ runs for noise levels $5\%$, $10\%$, and $25\%$. Regularization parameter $\alpha_{\kopt}$ obtained by UPRE on a TSVD generally has lower error for noise levels $10\%$ and $25\%$, with comparable error for noise level $5\%$. Note that the limits on the $y-$axes vary across subplots to better visualize the spread of the distributions across noise levels.
\label{fig:additional_cases_2}}
\end{figure}

\section{Conclusions\label{sec:conclusions}}
We have demonstrated that the regularization parameter obtained using the UPRE estimator converges with increasing number of terms  used from the TSVD for the solution. For a  severely ill-posed problem the  convergence occurs very quickly and is independent of the size of the problem due to the fast contamination of data coefficients by practical levels of noise. Practically-relevant problems are often, however, only moderately or mildly ill-posed, e. g. \cite{ChNaOl:08,walnut1,vatan:2014,VRA:2017}, and  it is therefore important to accurately and efficiently find both $\kopt$ and $\alpha_{\kopt}$. 

Theoretical results have been presented that demonstrate the convergence of  the regularization parameter $\alpha_k$ with $k$, increasing from below to $   \alpha_{\kopt}\le \alpha_r$, the optimal value for the full SVD. The posterior covariance thus decreases with $k$, leveling at approximately  $\sigma^2/(4  \alpha_{\kopt}^2)$. Thus the method naturally finds a solution which has increasing smoothness with increasing $k$ and solutions obtained without truncation will exhibit larger error due to increased smoothing. An effective and practical algorithm that implements the theory has also been provided, and  validated for $2$D image deblurring. These results expand on recent research on the characterization of the regularization parameter as closely dependent on the size of the singular subspace represented in the solution, \cite{FenuGCV,Renaut2017,RVA:15}. As there is a resurgence of interest in using a TSVD solution for the solution of ill-posed problems due to increased feasibility of finding a good approximation of a dominant singular subspace  using techniques from randomization, e.g. \cite{RandNLA,FastLS,GowerandRichtarik,Mahoney,LSRN,Overdetermined}, the results are more broadly relevant for more efficient estimates of the TSVD.  Implementation of the algorithm in these contexts is a topic for future work.

\bibliographystyle{siam}


\end{document}